\crefname{assumption}{Assumption}{Assumptions}
\Crefname{assumption}{Assumption}{Assumptions}
\crefname{section}{Section}{Sections}
\crefname{subsection}{Section}{Sections}
\Crefname{subsection}{Section}{Sections}
\title{Parallel-in-time solution of hyperbolic PDE systems via characteristic-variable block preconditioning\thanks{A published version of this preprint is available at \url{https://doi.org/10.1137/24M1673310}.
\funding{Los Alamos Laboratory report number LA-UR-25-20668.
This work was performed under the auspices of the U.S. Department of Energy by Lawrence Livermore National Laboratory under Contract DE-AC52-07NA27344 (LLNL-JRNL-2000224).
This work was supported in part by the U.S. Department of Energy, Office of Science, Office of Advanced Scientific Computing Research, Applied Mathematics program, and by NSERC of Canada.
}}}
\author{O. A. Krzysik\thanks{Department of Applied Mathematics, University of Waterloo, Waterloo, Ontario, Canada. \textit{Present address:} Theoretical Division, Los Alamos National Laboratory, Los Alamos, New Mexico, USA
  (\email{okrzysik@lanl.gov}, \url{https://orcid.org/0000-0001-7880-6512}).}
\and
H. De Sterck\thanks{Department of Applied Mathematics, University of Waterloo, Waterloo, Ontario, Canada} 
  (\email{hans.desterck@uwaterloo.ca}, \url{https://orcid.org/0000-0002-1641-932X}).
\and R. D. Falgout\thanks{Center for Applied Scientific Computing, Lawrence Livermore National Laboratory, Livermore, California, USA 
  (\email{falgout2@llnl.gov}, \url{https://orcid.org/0000-0003-4884-0087}).}
  \and J. B. Schroder\thanks{Department of Mathematics and Statistics, University of New Mexico, Albuquerque, New Mexico, USA
  (\email{jbschroder@unm.edu}, \url{https://orcid.org/0000-0002-1076-9206}).}
}
\DeclareMathOperator{\diag}{diag} 
\DeclareMathOperator{\blkdiag}{block-diag} 
\Crefname{subsection}{Section}{Sections}
\DeclareFontFamily{U}{mathx}{\hyphenchar\font45}
\DeclareFontShape{U}{mathx}{m}{n}{
      <5> <6> <7> <8> <9> <10>
      <10.95> <12> <14.4> <17.28> <20.74> <24.88>
      mathx10
      }{}
\DeclareSymbolFont{mathx}{U}{mathx}{m}{n}
\DeclareMathAccent{\widecheck}{0}{mathx}{"71}
\DeclareMathAccent{\wideparen}{0}{mathx}{"75}
\newcommand{\wh}[1]{\widehat{#1}} 								  
\newcommand{\wc}[1]{\widecheck{#1}}
\newcommand{\wt}[1]{\widetilde{#1}}
\renewcommand\tableofcontents{%
\@starttoc{toc}%
}
\begin{document}

%
%
%
%


\maketitle

\begin{abstract}
	We consider the parallel-in-time solution of hyperbolic partial differential equation (PDE) systems in one spatial dimension, both linear and nonlinear. In the nonlinear setting, the discretized equations are solved with a preconditioned residual iteration based on a global linearization.
	The linear(ized) equation systems are approximately solved parallel-in-time using a block preconditioner applied in the characteristic variables of the underlying linear(ized) hyperbolic PDE. This change of variables is motivated by the observation that inter-variable coupling between characteristic variables is  weak, at least locally where spatio-temporal variations in the eigenvectors of the associated flux Jacobian are sufficiently small, while that between the original variables is not. 
	For an $\ell$-dimensional system of PDEs, applying the preconditioner consists of solving a sequence of $\ell$ scalar linear(ized)-advection-like problems, each being associated with a different characteristic wave-speed in the underlying linear(ized) PDE.
	We approximately solve these linear advection problems using multigrid reduction-in-time (MGRIT); however, any other suitable parallel-in-time method could be used.
	Numerical examples are shown for the (linear) acoustics equations in heterogeneous media, and for the (nonlinear) shallow water equations and Euler equations of gas dynamics with shocks and rarefactions.
	For many test problems the solver converges in just a handful of iterations, and with mesh-independent convergence rates.
\end{abstract}

\begin{keywords}
	parallel-in-time, MGRIT, Parareal, multigrid, acoustics, shallow water, block preconditioning, nonlinear conservation laws, shocks, hyperbolic PDE systems, Euler equations
\end{keywords}

\begin{AMS}
	65F10, 65M22, 65M55, 35L03, 35L65
\end{AMS}

\section{Introduction}
\label{sec:introduction}

Parallel-in-time methods for solving evolutionary differential equations have grown in prominence over the past two decades, with many effective strategies developed capable of yielding substantial speed-ups over sequential time-stepping \cite{Gander2015,Ong_Schroder_2020}.
However, an aspect that limits the practicability of these methods is their lack of robust convergence speed for hyperbolic problems, and for advection-dominated problems more broadly.
In particular, two of the most well-known algorithms, Parareal \cite{Lions_etal_2001} and multigrid reduction-in-time (MGRIT) \cite{Falgout_etal_2014}, are well-documented to suffer in this regard \cite{Ruprecht_Krause_2012,
Steiner_etal_2015,
Dobrev_etal_2017,
Nielsen_etal_2018,
Hessenthaler_etal_2018,
Ruprecht_2018,
Schmitt_etal_2018,
Schroder_2018,
Howse_etal_2019,
DeSterck_etal_2019,
DeSterck_etal_2021,
Danieli_MacLachlan_2023}.
Other parallel-in-time strategies, namely ``diagonalization-in-time,'' do exist which do not seem to suffer (at least to the same extent) in this regard, e.g.,
\cite{Goddard-Wathen-2019-all-at-once-wave,
Gander-etal-2019-wave-direct,
Gander-Wu-2020-wave-para-diag,
Liu-Wu-2020-alpha-wave,
Danieli-Wathen-2021-wave-GMRES,
Liu-Wu-2022-sinc-nystrom,
Liu-etal-2022-1st-and-2nd-order,
Hon-SC-2023-block-precond-wave,
Wu-Zhou-2021,HopeCollins-etal-2024}; however, their efficacy for general hyperbolic problems with spatio-temporally varying flux Jacobians remains unclear \cite{Wu-Zhou-2021,HopeCollins-etal-2024}.

Recently we have made significant inroads towards developing more efficient MGRIT and Parareal solvers for hyperbolic partial differential equations (PDEs).
Underpinning our methodology is a modified semi-Lagrangian coarse-grid discretization for linear advection problems \cite{DeSterck_etal_2023_MOL,
DeSterck_etal_2023_SL,
DeSterck-etal-2023-nonlin-scalar}. 
This coarse-grid discretization is derived so as to better approximate the fine-grid discretization over a subset of problematic smooth Fourier modes known as characteristic components \cite{DeSterck_etal_2025_LFA}.
The methodology is motivated from that in \cite{Yavneh_1998}, wherein poor coarse-grid correction of characteristic components was addressed in the context of spatial multigrid for steady state advection-dominated PDEs. 

Given the difficulty of effectively applying MGRIT and Parareal to scalar hyperbolic PDEs, there is, unsurprisingly, little literature on their application to hyperbolic systems. Specifically, literature on the subject appears limited to: Acoustics \cite{Ruprecht_Krause_2012}, shallow water \cite{Nielsen_etal_2018,Danieli_MacLachlan_2023}, Euler \cite{Danieli_MacLachlan_2023}, and (a partially hyperbolic formulation of) linear elasticity \cite{Hessenthaler_etal_2018,DeSterck_etal_2019}.
However, none of these works appear to provide a solution methodology that fundamentally addresses the non-robust convergence speed issues for hyperbolic problems, e.g., the solvers tend to deteriorate as the grid is refined or as the time domain size increases. This motivates the solver we develop in this paper, which exhibits fast, mesh-independent convergence rates for several challenging test problems.
We do note that MGRIT- and Parareal-like methods have been applied with various levels of success to advection-diffusion-like PDE systems, e.g., dissipative shallow water equation systems \cite{Haut-Wingate-2014-asymptotic,Peddle-etal-2019-asymptotic-para,
Steinstraesser-etal-2024-SWE-MGRIT}, and Navier--Stokes   \cite{Steiner_etal_2015,Christopher-etal-2020-amr,Guzik-etal-2023-turbulence}.
We mention also that certain non-MGRIT/Parareal parallel-in-time methods have been applied to hyperbolic systems, including the acoustics equations \cite{Doerfler-etal-2019}, and the linearized shallow water equations \cite{Haut-etal-2015-exp-skew}.

In this work we consider both linear and nonlinear hyperbolic systems, extending various aspects of our previous parallel-in-time work for scalar hyperbolic PDEs, both linear \cite{DeSterck_etal_2025_LFA,
DeSterck_etal_2021,
DeSterck_etal_2023_MOL,
DeSterck_etal_2023_SL}, and most recently nonlinear \cite{DeSterck-etal-2023-nonlin-scalar}.
In particular, here nonlinear hyperbolic systems are solved using an extension of the global linearization strategy developed in \cite{DeSterck-etal-2023-nonlin-scalar} for nonlinear hyperbolic scalar PDEs.
The key novelty of this work is the development of a block preconditioner for solving linear(ized) hyperbolic systems with MGRIT. Applying this preconditioner requires space-time linear advection solves, which we approximate with MGRIT, relying on modified coarse-grid semi-Lagrangian discretizations to do so; however, the preconditioner may be used in conjunction with any other suitable parallel-in-time method in place of MGRIT.

A key aspect of our solver is first mapping to characteristic variables before applying block preconditioning techniques, which is motivated by the observation that inter-variable coupling between characteristic variables is significantly weaker than that between primitive variables, so that block preconditioning in the former is significantly more effective than in the latter.
In this sense, our block preconditioning strategy is related to that proposed by Reynolds et al. \cite{Reynolds-etal-2010-char-prec} for solving linearized stiff hyperbolic systems in the implicit time-stepping context. Specifically, the preconditioner in \cite{Reynolds-etal-2010-char-prec} maps to characteristic variables and solves only for the stiffness-inducing waves before mapping back to the original variables.
We further note that the current work is not the first to consider space-time block preconditioning for PDE systems, with the recent work in \cite{Danieli-etal-2022-block-prec,Schroder-etal-2023-block-prec-MHD} extending incompressible MHD and fluid dynamics preconditioners from the time-stepping context to the space-time setting.
In comparison to \cite{Danieli-etal-2022-block-prec,Schroder-etal-2023-block-prec-MHD}, we consider hyperbolic PDEs, and our proposed approach is designed to be applied to existing sequential time-stepping simulations, whereas the approaches in \cite{Danieli-etal-2022-block-prec,Schroder-etal-2023-block-prec-MHD} are structurally different in that they assume an all-at-once space-time discretization.

While parallel-in-time literature for hyperbolic systems is sparse, second-order wave equations have received considerably more attention, especially within the last five years
\cite{Dai_Maday_2013,
Gander_Guttel_2013,
Goddard-Wathen-2019-all-at-once-wave,
Gander-etal-2019-wave-direct,
Gander-Wu-2020-wave-para-diag,
Liu-Wu-2020-alpha-wave,
Danieli-Wathen-2021-wave-GMRES,
Southworth-etal-2021-mgrit-ana,
Liu-Wu-2022-sinc-nystrom,
Liu-etal-2022-1st-and-2nd-order,
Hon-SC-2023-block-precond-wave}.
Second-order wave equations are closely related to first-order hyperbolic systems, and in some circumstances may even be equivalent under reformulation (see \cref{sec:acoustics}).
Most of the aforementioned approaches for second-order wave equations target temporal parallelism by diagonalization-in-time
\cite{Goddard-Wathen-2019-all-at-once-wave,
Gander-etal-2019-wave-direct,
Gander-Wu-2020-wave-para-diag,
Liu-Wu-2020-alpha-wave,
Danieli-Wathen-2021-wave-GMRES,
Liu-Wu-2022-sinc-nystrom,
Liu-etal-2022-1st-and-2nd-order,
Hon-SC-2023-block-precond-wave}.
However, as mentioned previously, the applicability and efficacy of diagonalization-in-time approaches for general nonlinear hyperbolic conservation laws remains unclear \cite{Wu-Zhou-2021,HopeCollins-etal-2024}.

The remainder of this paper is organized as follows.
\Cref{sec:acoustics} introduces the acoustics equations, and \cref{sec:acoustics-pint} presents our parallel-in-time solver for them, including with numerical results. 
Relevant background for nonlinear hyperbolic systems is presented in \cref{sec:cons}, and \cref{sec:cons-pint} extends the acoustics-specific solver to this more general nonlinear setting.
Concluding remarks are given in \cref{sec:conclusion}.
The MATLAB code used to generate the results in this manuscript can be found at \url{https://github.com/okrzysik/pit-nonlinear-hyperbolic} (v1.0.0).

\section{Acoustics: Introduction and background}
\label{sec:acoustics}

In this section we provide background on the acoustics equations, including their wave-propagation nature (\cref{sec:acoustics-char-var}) and their discretization (\cref{sec:acoustics-disc}). 

The acoustics equations are a prototypical linear hyperbolic system, and are typically presented in non-conservative form, which is the formulation we use here (see \cite{LeVeque2002-nonlin-elasticity} for their conservative formulation).
In one spatial dimension they are
\begin{align} \label{eq:acoustic} \tag{acoustics}
\frac{\partial \bm{q}}{\partial t} + A_0 \frac{\partial \bm{q}}{\partial x} 
=
\frac{\partial}{\partial t} 
\begin{bmatrix}
p \\
u
\end{bmatrix} 
+  
\begin{bmatrix}
0 & K_0 \\
1/\rho_0 & 0
\end{bmatrix}
\frac{\partial}{\partial x}
\begin{bmatrix}
p \\
u
\end{bmatrix} 
=
\begin{bmatrix}
0 \\
0
\end{bmatrix}.
\end{align}
This system arises in the context of gas dynamics where $p$ and $u$ represent pressure and velocity perturbations, respectively, linearized about some steady state solution of an underlying nonlinear hyperbolic system; also, systems with equivalent or closely related structure arise in electromagnetism and elasticity \cite{LeVeque2002-nonlin-elasticity,
LeVeque_2004,
Hesthaven-Warburton-2008-DG,
Dafermos-2010-book,
Hesthaven_2017}.

In \eqref{eq:acoustic}, $\rho_0 = \rho_0(x) > 0$ and $K_0 = K_0(x) > 0$ are given parameters corresponding to the density and bulk modulus, respectively, of the underlying material. These material parameters are related to the \textit{sound speed} $c_0 = c_0(x) > 0$ and the \textit{impedance} $Z_0 = Z_0(x) > 0$ by
\begin{align} \label{eq:c0-Z0}
c_0 = \sqrt{K_0 / \rho_0}, 
\quad
\textrm{and}
\quad
Z_0 = \sqrt{K_0 \rho_0}.
\end{align}
Where convenient, we include a ``0'' subscript on material parameters to distinguish them from the unknown variables in \eqref{eq:acoustic}.
The material parameters $\rho_0$, $K_0$, $c_0$ and $Z_0$, may vary in space, but we do not consider variations in time.
In particular, spatial variation in the impedance, $Z_0$, leads to a much richer solution structure in \eqref{eq:acoustic} than constant $Z_0$, as we discuss in the next section.

Note that the coupled \textit{first-order} system \eqref{eq:acoustic} is equivalent to a pair of decoupled \textit{second-order} wave equations. 
Specifically, assuming that $p$ and $u$ are twice differentiable, then \eqref{eq:acoustic} can be manipulated to yield the equations
\begin{align} \label{eq:second-order-wave}
\frac{\partial^2 p}{\partial t^2} - K_0 \frac{\partial}{\partial x} \left( \frac{1}{\rho_0} \frac{\partial p}{\partial x} \right) = 0,
\quad
\textrm{and}
\quad
\frac{\partial^2 u}{\partial t^2} - \frac{1}{\rho_0} \frac{\partial}{\partial x} \left( K_0 \frac{\partial u}{\partial x} \right) = 0.
\end{align}
We do not work directly with these wave equations, however, since the first-order systems formulation more naturally extends to nonlinear hyperbolic conservation laws.

The coefficient matrix $A_0$ in \eqref{eq:acoustic} satisfies $A_0 = R_0 \Lambda_0 R^{-1}$ with
\begin{align} \label{eq:acoustic-eigen}
\Lambda_0 
= 
\begin{bmatrix}
-c_0 & 0 \\
0 & c_0
\end{bmatrix},
\quad
R_0 
=
\begin{bmatrix}
-Z_0 & Z_0 \\
1 & 1
\end{bmatrix},
\quad
R_0^{-1} = \frac{1}{2Z_0}
\begin{bmatrix}
-1 & Z_0 \\
  1 & Z_0
\end{bmatrix}.
\end{align}
Since $A_0$ has distinct real eigenvalues, system \eqref{eq:acoustic} is strictly hyperbolic.

\subsection{Characteristic variables}
\label{sec:acoustics-char-var}

Here we briefly review the wave-propagation nature of \eqref{eq:acoustic} since this is crucial to the development of our solvers for it.
To this end, let us reconsider \eqref{eq:acoustic} in \textit{characteristic variables}, which are defined by $\bm{w} := R_0^{-1} \bm{q} = (w^1, w^2)^\top$. 
Left multiplying \eqref{eq:acoustic} by $R_0^{-1}$ gives
\begin{align} \label{eq:acoustic-char}
R_0^{-1}
\left[
 \frac{\partial \bm{q}}{\partial t} 
+ 
(R_0 \Lambda_0 R_0^{-1}) \frac{\partial \bm{q}}{\partial x} 
\right]
=
\frac{\partial \bm{w}}{\partial t} 
+ 
\Lambda_0 R_0^{-1} 
\frac{\partial}{\partial x} 
\big( 
R_0 \bm{w}
\big)
= \bm{0}.
\end{align}
If the derivative of $Z_0$, i.e., $Z_0' = Z'_0(x)$, exists, the $x$ derivative in \eqref{eq:acoustic-char} can be expanded to yield a system of advection-reaction equations:
\begin{align} \label{eq:acoust-char-adv-rea}
\frac{\partial \bm{w}}{\partial t} 
+ 
\Lambda_0
\frac{\partial \bm{w}}{\partial x} 
= 
B_0 \bm{w},
\hspace{1ex}
\textrm{or}
\hspace{1ex}
\frac{\partial }{\partial t} 
\begin{bmatrix}
w^1 \\
w^2
\end{bmatrix}
+
\begin{bmatrix}
-c_0 & 0 \\
0 & c_0
\end{bmatrix} 
\frac{\partial }{\partial x} 
\begin{bmatrix}
w^1 \\
w^2
\end{bmatrix}
= 
\frac{c_0 Z'_0}{2 Z_0}
\begin{bmatrix}
w^1 - w^2 \\
w^1 - w^2
\end{bmatrix},
\end{align}
where $B_0 := -\Lambda_0 R_0^{-1} \frac{\partial R_0}{\partial x}$.
That is, $w^1$ and $w^2$ are \textit{coupled} left- and right-travelling waves. The coupling strength is, in some sense, proportional to $Z'_0$: Smooth changes in $Z_0$ induce coupling of the characteristic variables.
Notice that if $Z_0' \equiv 0$ then the two waves decouple globally.
If $Z_0'$ does not exist, \eqref{eq:acoustic-char} cannot be simplified as in \eqref{eq:acoust-char-adv-rea}; however, the characteristic variables still propagate as coupled left- and right-travelling waves, with coupling strength still an increasing function of changes in the impedance (see \cite[Chapter 9.9]{LeVeque_2004}).

\subsection{Godunov discretization}
\label{sec:acoustics-disc}

We discretize \eqref{eq:acoustic} on a spatial mesh consisting of $n_x$ finite-volume (FV) cells of equal width $h$.
The $i$th FV cell is $x \in [x_{i - 1/2}, x_{i+1/2}]$, $i \in \{ 1, \ldots, n_x \}$.
The temporal domain $t \in [0, T]$ is discretized with a grid of $n_t$ points, $\{ t_n \}_{n = 0}^{n_t -1 }$, $t_n = n \delta t$, with constant spacing $\delta t$.

To discretize \eqref{eq:acoustic} we use a first-order accurate Godunov method \cite[Chapter 9]{LeVeque_2004}.
The pressure, velocity, sound speed, and impedance are all assumed constant within each FV cell, with values denoted by $p_i$, $u_i$, $c_i$, and $Z_i$, respectively, in cell $i$.\footnote{For notational simplicity, when referring to discretized values of $c_0$ and $Z_0$ we drop their ``0'' subscripts. Note also that implementationally the constant value in each cell is taken as the function's value in the cell center, i.e., $c_i = c_0(x_i)$, $Z_{i} = Z_0(x_i)$.}
The discretization advances the solution from $t_n \to t_{n+1}$ in cell $i$ according to
\begin{subequations} \label{eq:acoustics-god}
\begin{align} 
\begin{bmatrix}
p_i \\
u_i
\end{bmatrix}^{n+1}
&=
\begin{bmatrix}
p_i \\
u_i
\end{bmatrix}^{n}
- 
\frac{\delta t}{h}
\left(
c_i 
{\cal W}^2_{i - 1/2}
-
c_i 
{\cal W}^1_{i+1/2}
\right),
\quad
\textrm{where}
\\
{\cal W}^1_{i-1/2}
&=
\frac{-(p_i - p_{i-1}) + Z_i(u_i - u_{i-1})}{Z_{i - 1} + Z_{i}}
\begin{bmatrix}
-Z_{i-1} \\
1
\end{bmatrix},
\\
{\cal W}^2_{i - 1/2}
&=
\frac{(p_i - p_{i-1}) + Z_{i-1}(u_i - u_{i-1})}{Z_{i - 1} + Z_{i}}
\begin{bmatrix}
Z_i \\
1
\end{bmatrix}.
\end{align}
\end{subequations}
The general principle underlying this discretization is to update cell averages by accounting for changes due to the waves propagating into each cell across the time step; for a given cell, this requires solving a Riemann problem at each of its interfaces.

Moving forward, it is useful to represent \eqref{eq:acoustics-god} in terms of an update for the stacked spatial vector $\bm{q}^n := (\bm{p}^n, \bm{u}^n)^{\top} \in \mathbb{R}^{2 n_x}$, with $\bm{p}^n = (p_1^n, \ldots, p_{n_x}^n)^\top, \bm{u}^n = (u_1^n, \ldots, u_{n_x}^n)^\top$.
In particular, we express this one-step update in the form
\begin{align} \label{eq:acoustics-god-Phi}
\bm{q}^{n+1}
=
\Phi
\bm{q}^{n},
\quad
\Phi 
=
\begin{bmatrix}
I & 0 \\
0 & I
\end{bmatrix}
-
\frac{\delta t}{h}
\begin{bmatrix}
{\cal C}_0 & 0 \\
0 & {\cal C}_0
\end{bmatrix}
\begin{bmatrix}
{\cal L}_{pp} & {\cal L}_{pu} \\
{\cal L}_{up} & {\cal L}_{uu} 
\end{bmatrix}
\equiv
\begin{bmatrix}
\Phi_{pp} & \Phi_{pu} \\
\Phi_{up} & \Phi_{uu}
\end{bmatrix},
\end{align}
with ${\cal C}_0 := \diag ( c_1, \ldots, c_{n_x} )$, and tridiagonal ${\cal L}_{ij} \in \mathbb{R}^{n_x \times n_x}$ defined by the stencils
\begin{subequations} \label{eq:acoustics-L-stencils}
\begin{align}
\big[ {\cal L}_{pp} \big]_i
&=
\left[
- \frac{Z_i}{Z_{i-1} + Z_{i}}, \,
   \frac{Z_i}{Z_{i-1} + Z_{i}} + \frac{Z_{i}}{Z_{i} + Z_{i+1}}, \,
- \frac{Z_{i}}{Z_{i} + Z_{i+1}}
\right],
\\
\big[ {\cal L}_{pu} \big]_i
&=
\left[
- \frac{Z_{i-1} Z_i}{Z_{i-1} + Z_{i}}, \,
   \frac{Z_{i-1} Z_i}{Z_{i-1} + Z_{i}} - \frac{Z_{i} Z_{i+1}}{Z_{i} + Z_{i+1}}, \,
  \frac{Z_{i} Z_{i+1}}{Z_{i} + Z_{i+1}}
\right],
\\
\big[ {\cal L}_{up} \big]_i 
&=
\left[
- \frac{1}{Z_{i-1} + Z_{i}}, \,
   \frac{1}{Z_{i-1} + Z_{i}} - \frac{1}{Z_{i} + Z_{i+1}}, \,
   \frac{1}{Z_{i} + Z_{i+1}}
\right],
\\
\big[ {\cal L}_{uu} \big]_i
&=
\left[
- \frac{Z_{i-1}}{Z_{i-1} + Z_{i}}, \,
   \frac{Z_{i-1}}{Z_{i-1} + Z_{i}} + \frac{Z_{i+1}}{Z_{i} + Z_{i+1}}, \,
- \frac{Z_{i+1}}{Z_{i} + Z_{i+1}}
\right],
\end{align}
\end{subequations}
where we have used stencil notation: We denote by $[Q]_i$ the stencil for the  $i$th row of the matrix $Q \in \mathbb{R}^{n_x \times n_x}$, with, e.g., $[ \cdot ]$ denoting a diagonal entry, and $[ \cdot, \cdot, \cdot ]$ denoting sub-, main-, and super-diagonal entries.

\section{Acoustics: Parallel-in-time solution}
\label{sec:acoustics-pint}

In this section, we consider the parallel-in-time solution of the fully discretized problem 
\begin{align}
\bm{q}^{n+1} \label{eq:godunov-ts}
=
\Phi 
\bm{q}^n,
\quad
n = 0, 1, \ldots, n_t-2,
\end{align}
corresponding to the Godunov discretization \eqref{eq:acoustics-god-Phi} of \eqref{eq:acoustic}.
To this end, it is convenient to re-express \eqref{eq:godunov-ts} as the global linear system
\begin{align} \label{eq:godunov-all-at-once}
{\cal A} \bm{q} 
\equiv
\begin{bmatrix}
I & \\
-\Phi & I \\
& \ddots & \ddots \\
& & -\Phi & I
\end{bmatrix}
\begin{bmatrix}
\bm{q}^0 \\
\bm{q}^1 \\
\vdots \\
\bm{q}^{n_t-1} 
\end{bmatrix}
=
\begin{bmatrix}
\bm{q}^0 \\
\bm{0} \\
\vdots \\
\bm{0}
\end{bmatrix}
=:
\bm{b},
\end{align}
where ${\cal A} \in \mathbb{R}^{2 n_x n_t \times 2 n_x n_t}$, $\bm{q} \in \mathbb{R}^{2 n_x n_t}$, and $\bm{b} \in \mathbb{R}^{2 n_x n_t}$ are the space-time discretization matrix, space-time solution vector, and right-hand side vector, respectively.

The crux of our solver for \eqref{eq:godunov-all-at-once} is a residual correction scheme with block preconditioner applied in the characteristic variables.
Next \cref{sec:acoustics-outer-iter} introduces and motivates the outer residual update iteration, and then \cref{sec:acoustics-god-char-var} considers the Godunov discretization in characteristic-variable space. \Cref{sec:acoustics-prec} introduces block preconditioners with numerical results following in \cref{sec:acoustics-num-res}.

\subsection{The outer iteration with characteristic-variable block preconditioning}
\label{sec:acoustics-outer-iter}

Here we describe our solver for \eqref{eq:godunov-all-at-once} based on a residual correction scheme with characteristic-variable block preconditioning. 
This block preconditioning strategy is the key novelty introduced in this paper.
Pseudo code for the solver is given in \cref{alg:char-prec}, and the main steps are detailed more thoroughly below.\footnote{The pseudo code in \cref{alg:char-prec} and the steps below denote the time-stepping operator $\Phi$ and the eigenvectors ${\cal R}_0$ of the flux Jacobian as time dependent, i.e., $\Phi = \Phi^n$ and ${\cal R}_0 = {\cal R}_0^n$, even though those arising from \eqref{eq:acoustic} are not. We make this slight generalization here since it will be useful later in \cref{sec:cons-pint} when considering more general linear(ized) hyperbolic systems that have temporally varying flux Jacobians.}
\begin{algorithm}[h!]
\KwIn{Hyperbolic system space-time discretization matrix ${\cal A}$; initial iterate $\bm{q}_0 \approx \bm{q}$; right-hand side $\bm{b}$; number of iterations \texttt{maxit}}
\KwOut{Approximate solution $\bm{q}_k \approx \bm{q}$}
$k \gets 0$\tcp*{Iteration counter}
\While{$k < \mathtt{maxit}$}{
  	${\bm{q}}_{k} \gets \textrm{relax on }{\cal A}(\bm{q}_k) \approx \bm{b}$\tcp*{F-relaxation}\label{ln:lin-F-relax}
	${\bm{r}}_{k} \gets \bm{b} - {\cal A} ({\bm{q}}_{k})$\tcp*{Compute residual}\label{ln:lin-res-comp}
	$\wh{\bm{r}}_{k} \gets \underset{n = 0, \ldots, n_t - 1}{\blkdiag} (({\cal R}_0^n)^{-1}) \bm{r}_{k}$\tcp*{Compute char. residual}\label{ln:lin-res-char-comp}
	\tcc{Approximately solve ${\wh{{\cal A}} \wh{\bm{e}}_k = \wh{\bm{r}}_k}$ using block preconditioner $\wh{{\cal P}}$ or $\wt{{\cal P}}$ of \cref{sec:acoustics-prec}. Diagonal blocks of $\wh{{\cal P}}$ and $\wt{{\cal P}}$ can be approximately inverted parallel-in-time.}
	\uIf{using $\wh{{\cal P}}$ from \eqref{eq:prec-def}}{ 
      $\wh{\bm{e}}_k \gets \wh{{\cal P}}^{-1} \wh{\bm{r}}_k$\tcp*{Uses block diagonal of $\wh{{\cal A}}$}\label{ln:lin-ehat}
    } 
    \uElseIf{using $\wt{{\cal P}}$ from \eqref{eq:prec-approx-def}}{
      $\wh{\bm{e}}_k \gets \wt{{\cal P}}^{-1} \wh{\bm{r}}_k$\tcp*{Approximates block diagonal of $\wh{{\cal A}}$}\label{ln:lin-etilde}
    }\label{ln:lin-end-of-solve}
	$\bm{q}_{k+1} \gets \bm{q}_k + \underset{n = 0, \ldots, n_t - 1}{\blkdiag}({\cal R}_0^n) \wh{\bm{e}}_k$\tcp*{Compute new iterate}\label{ln:new-iterate}
	$k \gets k + 1$\tcp*{Increment iteration counter}
	}
	  \caption{Characteristic-based block preconditioned iteration for ${\cal A} \bm{q} = \bm{b}$. 
  \label{alg:char-prec}}
\end{algorithm}
At iteration $k$ of the algorithm the approximate solution to \eqref{eq:godunov-all-at-once} is denoted by $\bm{q}_k \approx \bm{q}$, with associated algebraic residual and error given by
\begin{align}
\bm{r}_k = \bm{b} - {\cal A} \bm{q}_k,
\quad
\textrm{and}
\quad
\bm{e}_k = \bm{q} - \bm{q}_k.
\end{align}
A preconditioned residual correction scheme would compute $\bm{q}_{k + 1} = \bm{q}_k + P^{-1} \bm{r}_k$, where $P^{-1} \bm{r}_k \approx {\cal A}^{-1} \bm{r}_k = \bm{e}_k$, but we will carry out this correction in characteristic variables with a block preconditioner that can be applied efficiently using MGRIT.
The method requires an integer $m > 1$ to induce a CF-splitting of the time grid $\{ t_n \}_{n = 0}^{n_t - 1}$ as in MGRIT \cite{Falgout_etal_2014}, i.e., such that every $m$th time point, $t_0, t_m, t_{2m}, \ldots,$ is a C-point and all other points are F-points.
The algorithm also requires F-relaxation as it is used in MGRIT \cite{Falgout_etal_2014};  F-relaxation updates the solution in parallel across each block of consecutive $m-1$ F-points by time-stepping the solution from their preceding C-point, i.e., $\bm{q}^{j m+1}, \ldots, \bm{q}^{j m + m - 1}$ is updated based on $\bm{q}^{j m}$ in parallel for each $j \in \{0, 1, \ldots, \lfloor (n_t-1)/m \rfloor \}$.
%
%

\underline{Lines \ref{ln:lin-F-relax} and \ref{ln:lin-res-comp}:} Perform an F-relaxation on the system \eqref{eq:godunov-all-at-once} and then compute $\bm{r}_k$.
We now have a residual equation 
\begin{align} \label{eq:acoustics-res}
{\cal A} \bm{e}_k = \bm{r}_k,
\quad
\textrm{or}
\quad
\bm{e}_k^{n+1} 
&=
\Phi^n 
\bm{e}_k^n
+
\bm{r}_k^{n+1},
\quad
n = 0, 1, \ldots, n_t-2.
\end{align}

\underline{Lines \ref{ln:lin-res-char-comp}--\ref{ln:lin-end-of-solve}:} To solve \eqref{eq:acoustics-res}, first transform it to characteristic variable space.
To this end, we introduce the block matrices $({\cal R}_0^n)^{-1}$, ${\cal R}_0^n \in \mathbb{R}^{2 n_x \times 2 n_x}$ corresponding to spatially discretizing the left and right eigenvector matrices of $A_0$ in \eqref{eq:acoustic-eigen}:
\begin{align} \label{eq:acoustic-calR}
({\cal R}_0^n)^{-1}
:=
\frac{1}{2}
 \begin{bmatrix}
-{\cal Z}_0^{-1} & I \\
{\cal Z}_0^{-1} & I
\end{bmatrix},
\quad
\textrm{and}
\quad
{\cal R}_0^n
:=
 \begin{bmatrix}
-{\cal Z}_0 & {\cal Z}_0 \\
I & I
\end{bmatrix},
\end{align}
where ${\cal Z}_0 := \diag( Z_1, \ldots, Z_{n_x} ) \in \mathbb{R}^{n_x \times n_x}$ such that $\bm{w}_k^n = ({\cal R}_0^n)^{-1} \bm{q}_k^n$ is the current approximation of the characteristic variables.
Left multiplying the $n$th equation in \eqref{eq:acoustics-res} by $({\cal R}_0^{-1})^n$ gives
\begin{align} \label{eq:acoustics-res-char}
\wh{\bm{e}}_k^{n+1}
&=
\wh{\Phi}^n
\wh{\bm{e}}_k^{n}
+
\wh{\bm{r}}_k^{n+1},
\quad
n = 0, 1, \ldots, n_t-2.
\end{align}
Here quantities in characteristic space are denoted with hats, i.e.,
\begin{align} \label{eq:wh-Phi}
\wh{\bm{e}}^n_k := ({\cal R}_0^n)^{-1} \bm{e}^n_k,
\quad
\wh{\bm{r}}^n_k := ({\cal R}_0^n)^{-1} \bm{r}^n_k,
\quad
\wh{\Phi}^n
:=
({\cal R}_0^n)^{-1}
\Phi^n 
{\cal R}_0^n
=
\begin{bmatrix}
\wh{\Phi}_{11}^n & \wh{\Phi}_{12}^n \\
\wh{\Phi}_{21}^n & \wh{\Phi}_{22}^n
\end{bmatrix}.
\end{align}
The vectors $\wh{\bm{r}}_k^n$ and $\wh{\bm{e}}_k^n$ represent the algebraic residual and error, respectively, of $\bm{w}_k^n = ({\cal R}_0^n)^{-1} \bm{q}_k^n$.
Furthermore, $\wh{\Phi}^n$ is the time-stepping operator for the Godunov discretization \eqref{eq:acoustics-god-Phi} re-expressed in characteristic-variable space.

We re-order the global system \eqref{eq:acoustics-res-char} so that all time points for each characteristic variable are blocked together:
\begin{align} \label{eq:A-wh-blocked}
\wh{{\cal A}} \wh{\bm{e}}_k = \wh{\bm{r}}_k,
\quad
\textrm{or}
\quad
\begin{bmatrix}
\wh{{\cal A}}_{11} & \wh{{\cal A}}_{12} \\
\wh{{\cal A}}_{21} & \wh{{\cal A}}_{22}
\end{bmatrix}
\begin{bmatrix}
\wh{\bm{e}}_{k,1} \\[0.75ex]
\wh{\bm{e}}_{k,2}
\end{bmatrix}
=
\begin{bmatrix}
\wh{\bm{r}}_{k,1} \\[0.75ex]
\wh{\bm{r}}_{k,2}
\end{bmatrix},
\end{align}
where the blocks $\wh{{\cal A}}_{ii}, \wh{{\cal A}}_{ij} \in \mathbb{R}^{n_x n_t \times n_x n_t}$, $j \neq i$, are given by
\begin{align} 
\wh{{\cal A}}_{ii}
=
\begin{bmatrix}
I \\
-\wh{\Phi}_{ii}^0 & I \\
& \ddots & \ddots \\
\end{bmatrix},
\quad
\wh{{\cal A}}_{ij}
=
\begin{bmatrix}
0 \\
-\wh{\Phi}_{ij}^0 & 0 \\
& \ddots & \ddots \\
\end{bmatrix}.
\end{align}

Then approximately solve \eqref{eq:A-wh-blocked} using block preconditioning techniques.
To this end, let $\wh{{\cal P}} \approx \wh{{\cal A}}$ be a $2 \times 2$ block preconditioner, with $\wh{{\cal P}}^{-1}$ amenable to parallel-in-time implementation.
Further details on this preconditioner are given in \cref{sec:acoustics-prec}, including an approximation to it denoted as $\wt{{\cal P}}$ in \cref{alg:char-prec}.
The error is approximated via application of the preconditioner to the residual:
\begin{align} \label{eq:prec-update}
\begin{bmatrix}
\wh{\bm{e}}_{k, 1} \\[0.75ex]
\wh{\bm{e}}_{k, 2}
\end{bmatrix}
\gets
\wh{{\cal P}}^{-1}
\begin{bmatrix}
\wh{\bm{r}}_{k, 1} \\[0.75ex]
\wh{\bm{r}}_{k, 2}
\end{bmatrix},
\end{align}

\underline{Line \ref{ln:new-iterate}:} Compute the new iterate in primitive-variable space:
\begin{align} \label{eq:acoustics-final-update}
\bm{q}_{k+1}^n \gets 
\bm{q}_k^n
+ 
{\cal R}_0 \wh{\bm{e}}_k^n,
\quad
n = 0, 1, \ldots, n_t - 1.
\end{align}
Now we provide more detailed comments on certain aspects of this outer iteration.

\textit{Solving the residual equation \eqref{eq:acoustics-res} in characteristic-variable space:} 
In characteristic space, inter-variable coupling is, in general, \textit{relatively weaker} than intra-variable coupling.
We recall from \cref{sec:acoustics-char-var} that, at the PDE level,  characteristic variables decouple locally wherever the impedance is constant---this is investigated at the discrete level next in \cref{sec:acoustics-god-char-var}.
In contrast, inter-variable coupling in primitive space is not weak relative to intra-variable coupling, and has a complicated skew-symmetric-like structure---see the stencils for ${\cal L}_{pu}$ and ${\cal L}_{up}$ in \eqref{eq:acoustics-L-stencils}.
Thus, we anticipate block-preconditioning strategies will be more effective in characteristic space than they would be in primitive space.
In fact, our numerical experiments in Supplementary Materials Section \ref{SMsec:block-prec-prim} indicate that block preconditioning in primitive variables is completely ineffective, with simple block-preconditioned residual schemes strongly diverging, even on problems with relatively simple material parameters $c_0$ and $Z_0$.
Note also that we anticipate having effective parallel-in-time preconditioners in characteristic space, because, as described in \cref{sec:acoustics-char-var}, the characteristic variables propagate as waves, and therefore, inversion of the diagonal blocks $\wh{{\cal A}}_{ii}$ in \eqref{eq:A-wh-blocked} should correspond approximately to space-time linear advection solves, for which we have previously developed effective MGRIT methods \cite{DeSterck_etal_2023_SL,DeSterck-etal-2023-nonlin-scalar}. 

\textit{Mapping to characteristic space.} Solving system \eqref{eq:A-wh-blocked} requires the characteristic residuals $\{ \wh{\bm{r}}_{k}^n \}_{n = 0}^{n_t - 1}$. From \eqref{eq:wh-Phi}, $\wh{\bm{r}}_{k}^n = {\cal R}_{0}^{-1} \bm{r}_{k}^n$; however, by virtue of the F-relaxation in Line \ref{ln:lin-F-relax} of \cref{alg:char-prec}, $\bm{r}_0^n$ is zero all all F-points, so that the characteristic transformation $\wh{\bm{r}}_{k}^n = {\cal R}_{0}^{-1} \bm{r}_{k}^n$ need only be performed at C-points.
\begin{remark}
The transformation of quantities into characteristic-variable space and back is inexpensive relative to the computation of the F-relaxation, residual computation, and the application of the preconditioner that occur in \cref{alg:char-prec}. This is because the transformation need only be done at C-points, and because the blocks in the $2 \times 2$ matrices $({\cal R}_0^n)^{-1}$ and ${\cal R}_0^n$ are diagonal (see \eqref{eq:acoustic-calR}). Furthermore, this is a completely local-in-time calculation that does not require any communication when the algorithm is implemented in a parallel environment.
\end{remark}

\textit{Correction in primitive space \eqref{eq:acoustics-final-update}.} This correction need only be done at C-points because the F-relaxation in Line \ref{ln:lin-F-relax} of \cref{alg:char-prec} on the next iteration will immediately overwrite all F-point values of $\bm{q}_k$ based only on the value of $\bm{q}_k$ at C-points.

\textit{Residual calculation.} The outer iteration does not technically require the F-relaxation in Line \ref{ln:lin-F-relax} of \cref{alg:char-prec}. Rather, all that is required is to compute the residual $\bm{r}_k$. We elect to compute this residual via first performing an F-relaxation because it may have computational savings when combined with our parallel-in-time implementation of $\wh{{\cal P}}$---which itself uses an inner F-relaxation. Note also that if no F-relaxation is done, then the two prior points no longer hold: The residual needs to be mapped to characteristic space at all time points, and the error correction needs to be applied at all time points.
%

\subsection{Godunov discretization in characteristic variables}
\label{sec:acoustics-god-char-var}

To develop effective preconditioners for \eqref{eq:A-wh-blocked} we need to understand the structure of its blocks, and, hence, that of $\wh{\Phi}$ in \eqref{eq:wh-Phi}, i.e., the Godunov discretization in characteristic-variable space. 
The below lemma and following corollary express the stencils for $\wh{\Phi}$.

\begin{lemma}[Godunov discretization in characteristic variables]
\label{lem:god-char-stencils}
Let $\Phi \in \mathbb{R}^{2 n_x \times 2 n_x}$ be the time-stepping operator for the Godunov discretization \eqref{eq:acoustics-god} of \eqref{eq:acoustic}, and $\wh{\Phi} := {\cal R}_0^{-1} \Phi {\cal R}_0 \in \mathbb{R}^{2 n_x \times 2 n_x}$ be the time-stepping operator for the same discretization expressed in characteristic variables as in \eqref{eq:wh-Phi}.
Then, the stencils of blocks of $\wh{\Phi}$ are
\begin{subequations} \label{eq:acoustics-god-Phi-hat-stencils}
\begin{alignat}{2}
\label{eq:whPhi11-lem}
\big[ \wh{\Phi}_{11} \big]_{i}
&=
\bigg(
\big[ 1 \big] 
+ 
c_i \frac{\delta t}{h} 
\left[0, \, -1, \, 1 \right]
\bigg)
&&+
c_i \frac{\delta t}{h} 
\left[0, \, 0, \,  \frac{Z_{i+1} - Z_{i}}{Z_{i+1} + Z_{i}} \right]
\\
\label{eq:whPhi22-lem}
\big[ \wh{\Phi}_{22} \big]_{i}
&=
\bigg(
\big[ 1 \big] 
- 
c_i \frac{\delta t}{h} 
\left[ -1, \, 1, \,  0 \right]
\bigg)
&&+
c_i \frac{\delta t}{h} 
\left[\frac{Z_{i-1} - Z_{i}}{Z_{i-1} + Z_{i}}, \, 0, \,  0 \right],
\\
\label{eq:whPhi12-lem}
\big[ \wh{\Phi}_{12} \big]_{i}
&=
&&-
c_i \frac{\delta t}{h} 
\left[ \frac{Z_{i+1} - Z_{i}}{Z_{i+1} + Z_{i}} \right], 
\\
\label{eq:whPhi21-lem}
\big[ \wh{\Phi}_{21} \big]_{i}
&=
&&-
c_i \frac{\delta t}{h} 
\left[\frac{Z_{i-1} - Z_{i}}{Z_{i-1} + Z_{i}} \right].
\end{alignat}
\end{subequations} 
\end{lemma}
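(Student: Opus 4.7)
The plan is to compute $\wh{\Phi} = {\cal R}_0^{-1}\Phi {\cal R}_0$ by direct block multiplication, substitute the explicit stencils \eqref{eq:acoustics-L-stencils}, and then rearrange each resulting row stencil into an upwind-advection piece plus an impedance-jump correction.

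First I would carry out the block conjugation symbolically. Writing $\Phi = I_2 \otimes \text{stuff} - (\delta t / h) \, \mathrm{blockdiag}({\cal C}_0, {\cal C}_0)\, {\cal L}$ as in \eqref{eq:acoustics-god-Phi}, and using \eqref{eq:acoustic-calR}, the identity term is preserved under conjugation because ${\cal R}_0^{-1} {\cal R}_0 = I$. For the remaining part, crucially ${\cal C}_0$ and ${\cal Z}_0$ are diagonal (hence commute with each other and with the scalar factors in ${\cal R}_0^{-1}$, ${\cal R}_0$), so
\begin{align*}
\wh{\Phi}_{11} &= I - \tfrac{\delta t}{2h}{\cal C}_0 \bigl[{\cal Z}_0^{-1}{\cal L}_{pp}{\cal Z}_0 - {\cal Z}_0^{-1}{\cal L}_{pu} - {\cal L}_{up}{\cal Z}_0 + {\cal L}_{uu}\bigr],\\
\wh{\Phi}_{22} &= I - \tfrac{\delta t}{2h}{\cal C}_0 \bigl[{\cal Z}_0^{-1}{\cal L}_{pp}{\cal Z}_0 + {\cal Z}_0^{-1}{\cal L}_{pu} + {\cal L}_{up}{\cal Z}_0 + {\cal L}_{uu}\bigr],\\
\wh{\Phi}_{12} &= \phantom{I -{}}\tfrac{\delta t}{2h}{\cal C}_0 \bigl[{\cal Z}_0^{-1}{\cal L}_{pp}{\cal Z}_0 + {\cal Z}_0^{-1}{\cal L}_{pu} - {\cal L}_{up}{\cal Z}_0 - {\cal L}_{uu}\bigr],\\
\wh{\Phi}_{21} &= \phantom{I -{}}\tfrac{\delta t}{2h}{\cal C}_0 \bigl[{\cal Z}_0^{-1}{\cal L}_{pp}{\cal Z}_0 - {\cal Z}_0^{-1}{\cal L}_{pu} + {\cal L}_{up}{\cal Z}_0 - {\cal L}_{uu}\bigr].
\end{align*}
Each expression is a linear combination of the four tridiagonal blocks, pre/post-multiplied by the diagonal ${\cal Z}_0^{\pm 1}$.

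Next I would convert to stencils row by row. Left multiplication by ${\cal Z}_0^{-1}$ scales row $i$ by $Z_i^{-1}$, and right multiplication by ${\cal Z}_0$ scales column $j$ by $Z_j$; applying these to the stencils in \eqref{eq:acoustics-L-stencils} gives
\begin{align*}
[{\cal Z}_0^{-1}{\cal L}_{pp}{\cal Z}_0]_i &= \bigl[-\tfrac{Z_{i-1}}{Z_{i-1}+Z_i},\, \tfrac{Z_i}{Z_{i-1}+Z_i}+\tfrac{Z_i}{Z_i+Z_{i+1}},\, -\tfrac{Z_{i+1}}{Z_i+Z_{i+1}}\bigr],\\
[{\cal Z}_0^{-1}{\cal L}_{pu}]_i &= \bigl[-\tfrac{Z_{i-1}}{Z_{i-1}+Z_i},\, \tfrac{Z_{i-1}}{Z_{i-1}+Z_i}-\tfrac{Z_{i+1}}{Z_i+Z_{i+1}},\, \tfrac{Z_{i+1}}{Z_i+Z_{i+1}}\bigr],\\
[{\cal L}_{up}{\cal Z}_0]_i &= \bigl[-\tfrac{Z_{i-1}}{Z_{i-1}+Z_i},\, \tfrac{Z_i}{Z_{i-1}+Z_i}-\tfrac{Z_i}{Z_i+Z_{i+1}},\, \tfrac{Z_{i+1}}{Z_i+Z_{i+1}}\bigr],
\end{align*}
and $[{\cal L}_{uu}]_i$ is already given. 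Forming the four linear combinations above is then purely mechanical: grouping terms by the denominator $Z_{i-1}+Z_i$ versus $Z_i+Z_{i+1}$ makes extensive cancellations visible. For $\wh{\Phi}_{11}$, the sub-diagonal cancels entirely, the $(Z_{i-1}+Z_i)$-terms on the diagonal cancel while the $(Z_i+Z_{i+1})$-terms sum to $2$, and the super-diagonal consolidates to $-4 Z_{i+1}/(Z_i+Z_{i+1})$. A symmetric calculation handles $\wh{\Phi}_{22}$, while for $\wh{\Phi}_{12}, \wh{\Phi}_{21}$ the sub- and super-diagonals cancel, leaving only a diagonal entry.

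Finally, I would rewrite the resulting row stencils by splitting the super-diagonal entry $2Z_{i+1}/(Z_i+Z_{i+1})$ as $1 + (Z_{i+1}-Z_i)/(Z_{i+1}+Z_i)$ (and analogously for $\wh{\Phi}_{22}$); this exposes the standard first-order upwind advection stencils for wave-speeds $\mp c_i$ plus the impedance-jump correction exactly as in \eqref{eq:acoustics-god-Phi-hat-stencils}. The main obstacle is purely bookkeeping: keeping the four tridiagonal contributions and their signs straight while performing the $Z$-scaling. Once the combinations are written down as above, every cancellation reduces to adding three ratios with a common denominator, so no nontrivial algebraic identities are needed.
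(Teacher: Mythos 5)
Your proposal follows the same route as the paper's proof: conjugate $\Phi$ block-by-block with ${\cal R}_0^{\pm 1}$, exploit the commutation of the diagonals ${\cal C}_0,{\cal Z}_0$ to reduce each block to a signed sum of ${\cal Z}_0^{-1}{\cal L}_{pp}{\cal Z}_0$, ${\cal Z}_0^{-1}{\cal L}_{pu}$, ${\cal L}_{up}{\cal Z}_0$, ${\cal L}_{uu}$, convert to row stencils via the diagonal row/column scaling, and simplify. The block expressions, the four scaled stencils, the cancellation pattern, and the final $2Z_{i+1}/(Z_i+Z_{i+1}) = 1 + (Z_{i+1}-Z_i)/(Z_i+Z_{i+1})$ rearrangement all match the paper's calculation step for step, so this is essentially the same proof.
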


\begin{proof}
The proof works by straightforward, albeit somewhat tedious, algebraic manipulation of the stencils in \eqref{eq:acoustics-god-Phi}. Details are given in \cref{app:proofs}.
\end{proof}

The structure here can be seen more easily if a smoothness assumption is imposed on the impedance, as in the following corollary.

\begin{corollary}[Impedance-linearized Godunov discretization in characteristic variables]
\label{cor:god-Z-linearized}
Suppose the impedance $Z(x)$ is twice differentiable in cells $i-1,i,i+1$.
Then, the stencils of blocks of $\wh{\Phi}$ given in \eqref{eq:acoustics-god-Phi-hat-stencils} satisfy
\begin{subequations} \label{eq:acoustics-god-Phi-hat-stencils-linearized}
\begin{alignat}{2}
\label{eq:whPhi11-cor}
\big[ \wh{\Phi}_{11} \big]_{i}
&=
\left(
\big[ 1 \big] 
+ 
c_i \frac{\delta t}{h} 
\left[0, \, -1, \, 1 \right] 
\right)
&&+
c_i \frac{\delta t}{h} 
\frac{h}{2 Z_i}
\big[
0, 0, Z'_i + {\cal O} (h)
\big],
\\
\label{eq:whPhi22-cor}
\big[ \wh{\Phi}_{22} \big]_{i}
&=
\left(
\big[ 1 \big] 
- 
c_i \frac{\delta t}{h} 
\left[-1, \, 1, \,  0 \right]
\right)
&&-
c_i \frac{\delta t}{h} 
\frac{h}{2 Z_i}
\big[
Z'_i + {\cal O} (h), 0, 0
\big],
\\
\label{eq:whPhi12-cor}
\big[ \wh{\Phi}_{12} \big]_{i}
&=
&&-c_i \frac{\delta t}{h} 
\frac{h}{2 Z_i}
\big[ 
Z'_i
+
{\cal O} (h)
\big], 
\\
\label{eq:whPhi21-cor}
\big[ \wh{\Phi}_{21} \big]_{i}
&=
&&+
c_i \frac{\delta t}{h} 
\frac{h}{2 Z_i}
\big[ 
Z'_i
+
{\cal O} (h)
\big].
\end{alignat}
\end{subequations} 
\end{corollary}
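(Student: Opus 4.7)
The plan is to obtain the corollary directly from \cref{lem:god-char-stencils} by Taylor expanding the impedance ratios that appear in \eqref{eq:acoustics-god-Phi-hat-stencils}. The smoothness hypothesis on $Z$ in cells $i-1,i,i+1$ lets me write
\begin{equation*}
Z_{i \pm 1} = Z_i \pm h Z'_i + \tfrac{h^2}{2} Z''_i + \mathcal{O}(h^3),
\end{equation*}
from which the key forward and backward differences become
\begin{equation*}
Z_{i+1} - Z_i = h Z'_i + \mathcal{O}(h^2), \qquad Z_{i-1} - Z_i = -h Z'_i + \mathcal{O}(h^2),
\end{equation*}
while the sums satisfy $Z_{i+1} + Z_i = 2 Z_i + \mathcal{O}(h)$ and $Z_{i-1} + Z_i = 2 Z_i + \mathcal{O}(h)$.

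Next I would substitute these expansions into the two scalar ratios that drive every off-diagonal impedance-induced contribution in \eqref{eq:acoustics-god-Phi-hat-stencils}. A single division (using $1/(2Z_i + \mathcal{O}(h)) = \tfrac{1}{2Z_i} + \mathcal{O}(h)$, which is valid since $Z_i > 0$) yields
\begin{equation*}
\frac{Z_{i+1} - Z_i}{Z_{i+1} + Z_i} = \frac{h}{2 Z_i} \bigl( Z'_i + \mathcal{O}(h) \bigr),
\qquad
\frac{Z_{i-1} - Z_i}{Z_{i-1} + Z_i} = -\frac{h}{2 Z_i} \bigl( Z'_i + \mathcal{O}(h) \bigr).
\end{equation*}
Plugging the first identity into \eqref{eq:whPhi11-lem} and \eqref{eq:whPhi12-lem} reproduces \eqref{eq:whPhi11-cor} and \eqref{eq:whPhi12-cor} verbatim; plugging the second into \eqref{eq:whPhi22-lem} and \eqref{eq:whPhi21-lem} produces \eqref{eq:whPhi22-cor} and \eqref{eq:whPhi21-cor}, with the sign of the prefactor in \eqref{eq:whPhi21-cor} flipping from ``$-$'' to ``$+$'' precisely because the backward-difference ratio is negative to leading order.

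There is no real obstacle here; the derivation is purely algebraic Taylor expansion. The only points requiring a bit of bookkeeping are (i) tracking which ratio multiplies which block so the signs align with the statement, and (ii) making sure the $\mathcal{O}(h)$ remainder inside the brackets absorbs both the $\tfrac{h^2}{2} Z''_i$ contribution from the numerator and the $\mathcal{O}(h)$ contribution from the expansion of $1/(Z_{i\pm1}+Z_i)$. Since $Z_i > 0$ is bounded away from zero by hypothesis, the constants in the remainder are uniform in $i$, which is all that is needed. No new structure beyond the lemma is used, so the proof is essentially a one-line substitution once the two ratio expansions above are recorded.
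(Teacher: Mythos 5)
Your proposal is correct and matches the paper's proof exactly: the paper's own argument is precisely the Taylor expansion of the $Z$-dependent ratios in \eqref{eq:acoustics-god-Phi-hat-stencils} about $x_i$, which you carry out explicitly. Your sign bookkeeping for the backward-difference ratio in \eqref{eq:whPhi22-cor} and \eqref{eq:whPhi21-cor} is right, and your remarks on absorbing the higher-order terms into the $\mathcal{O}(h)$ remainder are the only nontrivial detail, handled correctly.
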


\begin{proof}
The result follows from Taylor expansion of the $Z$-dependent terms in \eqref{eq:acoustics-god-Phi-hat-stencils} about the point $x_i$, recalling the shorthand $Z_j = Z(x_j)$.
\end{proof}

Clearly, there are two different scales present in these stencils.
We refer to stencil components in \eqref{eq:acoustics-god-Phi-hat-stencils} and \eqref{eq:acoustics-god-Phi-hat-stencils-linearized} as ``order zero'' if they are independent of mesh parameters, or if they are of size $c_i \tfrac{\delta t}{h} \lesssim 1$, noting that the CFL number is a fixed constant independent of the mesh resolution.
Otherwise, we refer to terms proportional to ${\cal O} \big( Z_{i} - Z_{i \pm 1} \big)$ in \eqref{eq:acoustics-god-Phi-hat-stencils} or ${\cal O} \big( h Z'_i \big)$ in \eqref{eq:acoustics-god-Phi-hat-stencils-linearized}  as ``order one'' since they are of size ${\cal O}(h)$ when $Z(x)$ is sufficiently smooth.

Consider first the inter-variable coupling terms given by the stencils $\big[ \wh{\Phi}_{12} \big]_i$ in \eqref{eq:whPhi12-lem}/\eqref{eq:whPhi12-cor} and $\big[ \wh{\Phi}_{21} \big]_i$ in \eqref{eq:whPhi21-lem}/\eqref{eq:whPhi21-cor}.
Notice that if the impedance is constant across cells $i-1,i,i+1$, then $\big[ \wh{\Phi}_{12} \big]_i = \big[ \wh{\Phi}_{21} \big]_i = 0$, so that there is no inter-variable coupling, consistent with what occurs at the PDE level (see \cref{sec:acoustics-char-var}).
Otherwise, if the impedance varies smoothly across these cells, the inter-variable coupling is relatively weak, being order one.
We also note that the structure of the inter-variable coupling is simple, consisting only of a diagonal connection.

Now consider the intra-variable coupling terms given by the stencils $\big[ \wh{\Phi}_{11} \big]_i$ in \eqref{eq:whPhi11-lem}/\eqref{eq:whPhi11-cor} and $\big[ \wh{\Phi}_{22} \big]_i$ in \eqref{eq:whPhi22-lem}/\eqref{eq:whPhi22-cor}.
The order-one components have a similar structure as in $\big[ \wh{\Phi}_{12} \big]_i$, $\big[ \wh{\Phi}_{21} \big]_i$, although the connections are not diagonal in this case.
Unlike the inter-variable stencils, $\big[ \wh{\Phi}_{11} \big]_i$ and $\big[ \wh{\Phi}_{22} \big]_i$ have order-zero connections.
%
Let us denote the order-zero connections in $\big[ \wh{\Phi}_{11} \big]_i$ and $\big[ \wh{\Phi}_{22} \big]_i$ as $\big[ \wt{\Phi}_{11} \big]_i$ and $\big[ \wt{\Phi}_{22} \big]_i$, respectively, and we note they have a special structure:
\begin{subequations} \label{eq:god-whPhi-adv}
\begin{align} 
\label{eq:god-whPhi-11}
\big[ \wt{\Phi}_{11} \big]_{i}
:=
[1] + c_i \frac{\delta t}{h} [0, -1, 1]
\equiv
\textrm{Godunov-discretization}
\bigg(
\frac{\partial}{\partial t} - c_0 \frac{\partial}{\partial x}
\bigg),
\\
\label{eq:god-whPhi-22}
\big[ \wt{\Phi}_{22} \big]_{i}
:=
[1] - c_i \frac{\delta t}{h} [-1, 1, 0]
\equiv
\textrm{Godunov-discretization}
\bigg(
\frac{\partial}{\partial t} + c_0 \frac{\partial}{\partial x}
\bigg).
\end{align}
\end{subequations}
That is, these order-zero stencils for the characteristic variables are equivalent to discretizing the above linear advection operators (assuming $c_0 > 0$) using the same Godunov methodology as used for the acoustics discretization in \cref{sec:acoustics-disc}; see \cite[Chapter 9.3]{LeVeque_2004} for their derivation.
%

\subsection{Block preconditioners}
\label{sec:acoustics-prec}

Now we specify the $2 \times 2$ block preconditioners $\wh{{\cal P}}^{-1} \approx \wh{{\cal A}}^{-1}$ applied in \eqref{eq:prec-update} to approximate the error in characteristic space.
Specifically, we consider block Jacobi (diagonal) and block Gauss--Seidel (lower triangular) preconditioners, given by
\begin{align} \label{eq:prec-def}
\wh{{\cal P}}_{D}^{-1}
=
\begin{bmatrix}
\wh{{\cal A}}_{11} & 0 \\
0 & \wh{{\cal A}}_{22}
\end{bmatrix}^{-1},
\quad
\textrm{and}
\quad
\wh{{\cal P}}_{L}^{-1}
=
\begin{bmatrix}
\wh{{\cal A}}_{11} & 0 \\
\wh{{\cal A}}_{21} & \wh{{\cal A}}_{22}
\end{bmatrix}^{-1},
\end{align}
respectively.
The dominant cost in applying either of these preconditioners is the inversion of the diagonal blocks $\wh{{\cal A}}_{ii}$.
Based on \cref{lem:god-char-stencils} and \cref{cor:god-Z-linearized} and the subsequent discussion, applying $\wh{{\cal A}}_{11}^{-1}$ and $\wh{{\cal A}}_{22}^{-1}$ corresponds approximately to space-time linear advection solves with wave-speeds $-c_0$ and $+c_0$.
The algorithm outlined in \cref{sec:acoustics-outer-iter} will be parallel-in-time provided these diagonal blocks are inverted parallel-in-time. 
We remark that, in practice, there is usually no need to exactly solve these inner problems: Typically in block preconditioning, one can replace exact inner solves with (sufficiently accurate) inexact solves without significantly   deteriorating convergence of the outer iteration.
For example, exact inverses of $\wh{{\cal A}}_{ii}$ could be replaced with approximate solves from any effective iterative parallel-in-time method.

Further motivated by the heuristic of using inexact inner solves, we introduce a pair of preconditioners based on approximating the diagonal blocks in \eqref{eq:prec-def}.
Specifically, we use the approximations in \eqref{eq:god-whPhi-adv} to consider
\begin{align} \label{eq:prec-approx-def}
\wt{{\cal P}}_{D}^{-1}
&=
\begin{bmatrix}
\wt{{\cal A}}_{11} & 0 \\
0 & \wt{{\cal A}}_{22}
\end{bmatrix}^{-1},
\textrm{ and }
\wt{{\cal P}}_{L}^{-1}
=
\begin{bmatrix}
\wt{{\cal A}}_{11} & 0 \\
\wh{{\cal A}}_{21} & \wt{{\cal A}}_{22}
\end{bmatrix}^{-1},
\textrm{ where }
\wt{{\cal A}}_{ii}
=
\begin{bmatrix}
I \\
-\wt{\Phi}_{ii} & I \\
& \ddots & \ddots \\
\end{bmatrix}.
\end{align}
Here the approximate time-stepping operators $\wt{\Phi}_{11} \approx \wh{\Phi}_{11}$ and $\wt{\Phi}_{22} \approx \wh{\Phi}_{22}$ are the linear advection Godunov discretizations in \eqref{eq:god-whPhi-adv}, and coincide with those of $\wh{\Phi}_{11}$ and $\wh{\Phi}_{22}$ in \eqref{eq:acoustics-god-Phi-hat-stencils} in cases of constant impedance.
Recall that these Godunov discretizations are consistent with the lowest-order terms in $\wh{\Phi}_{11}$ and $\wh{\Phi}_{22}$.
Moreover, our numerical tests consider approximately inverting $\wt{{\cal A}}_{ii}$ using MGRIT.
This overall preconditioning strategy, i.e., using $\wt{{\cal P}}$ with inexact inner inverses, is motivated by our existing parallel-in-time approaches for linear advection \cite{DeSterck_etal_2023_MOL,DeSterck_etal_2023_SL,DeSterck-etal-2023-nonlin-scalar}.

Numerical results are considered in the next section.
First, we note that while our solver outlined in \cref{alg:char-prec} is effective for many problems, it is not without limitations. 
For example, our numerical results indicate that its performance degrades when applied to problems with increasingly complicated impedance variation.
\begin{remark}[Schur-complement preconditioning]
\label{rem:schur-comp}
More general than the block preconditioners in \eqref{eq:prec-def} are those based on the Schur complements of $\wh{{\cal A}}$. 
For example, 
\begin{align} \label{eq:schur-block-prec}
\begin{bmatrix}
\wh{{\cal A}}_{11} & 0 \\
0 & \wc{{\cal S}}_{22}
\end{bmatrix}^{-1},
\quad
\begin{bmatrix}
\wh{{\cal A}}_{11} & 0 \\
\wh{{\cal A}}_{21} & \wc{{\cal S}}_{22}
\end{bmatrix}^{-1},
\quad
\wc{{\cal S}}_{22} \approx \wh{{\cal S}}_{22} :=
\wh{{\cal A}}_{22} 
-
\wh{{\cal A}}_{21} \wh{{\cal A}}_{11}^{-1} \wh{{\cal A}}_{12},
\end{align}
directly generalize those in \eqref{eq:prec-def}, with $\wh{{\cal S}}_{22}$ the (2,2) Schur complement of $\wh{{\cal A}}$ in \eqref{eq:A-wh-blocked}.
The effectiveness of the preconditioners \eqref{eq:schur-block-prec} is characterized by the quality of the preconditioned Schur complement, $\wc{{\cal S}}_{22}^{-1} \wh{{\cal S}}_{22}$, at least in the block lower triangular case \cite{Southworth-etal-2020-2x2}.
Notice that preconditioners \eqref{eq:schur-block-prec} reduce to those in \eqref{eq:prec-def} when $\wc{{\cal S}}_{22}$ is chosen as $\wh{{\cal A}}_{22}$, with preconditioned Schur complement, 
$
\wc{{\cal S}}_{22}^{-1} \wh{{\cal S}}_{22}
=
\wh{{\cal A}}_{22}^{-1}
\big(
\wh{{\cal A}}_{22} 
-
\wh{{\cal A}}_{21} \wh{{\cal A}}_{11}^{-1} \wh{{\cal A}}_{12}
\big)
=
I -
\big(
\wh{{\cal A}}_{22}^{-1} \wh{{\cal A}}_{21} 
\big)
\big(
\wh{{\cal A}}_{11}^{-1} \wh{{\cal A}}_{12}
\big)
$.
It is not too difficult to show that the Schur complement $\wh{{\cal S}}_{22}$ is a dense block lower triangular matrix, and that the choice of $\wc{{\cal S}}_{22} = \wh{{\cal A}}_{22}$ approximates $\wh{{\cal S}}_{22}$ by truncating everything below its block subdiagonal.
We do not theoretically analyze the impact of choosing $\wc{{\cal S}}_{22} = \wh{{\cal A}}_{22}$ further here.
%
In future work we hope to improve the robustness of our linear solver with respect to impedance variation by developing better Schur complement approximations.
\end{remark}
%
%

\subsection{Numerical results}
\label{sec:acoustics-num-res}

\begin{figure}[b!]
\centerline{
\includegraphics[width=0.35\textwidth]{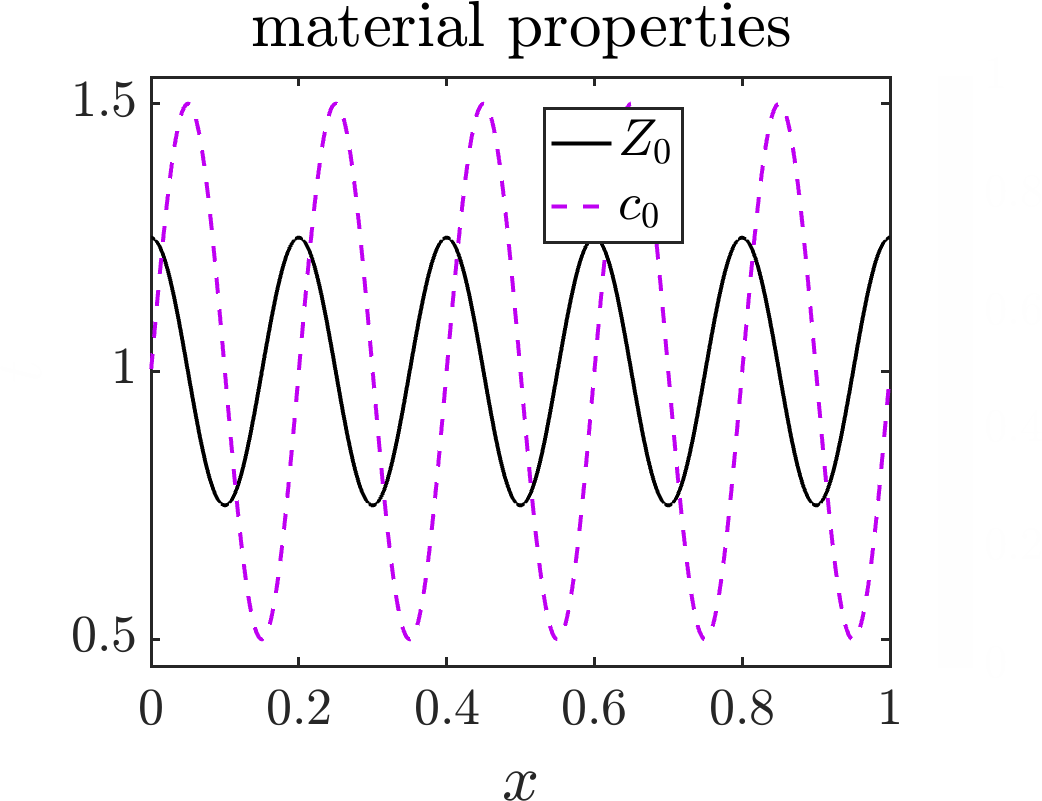}
\hspace{-5ex}
\includegraphics[width=0.35\textwidth]{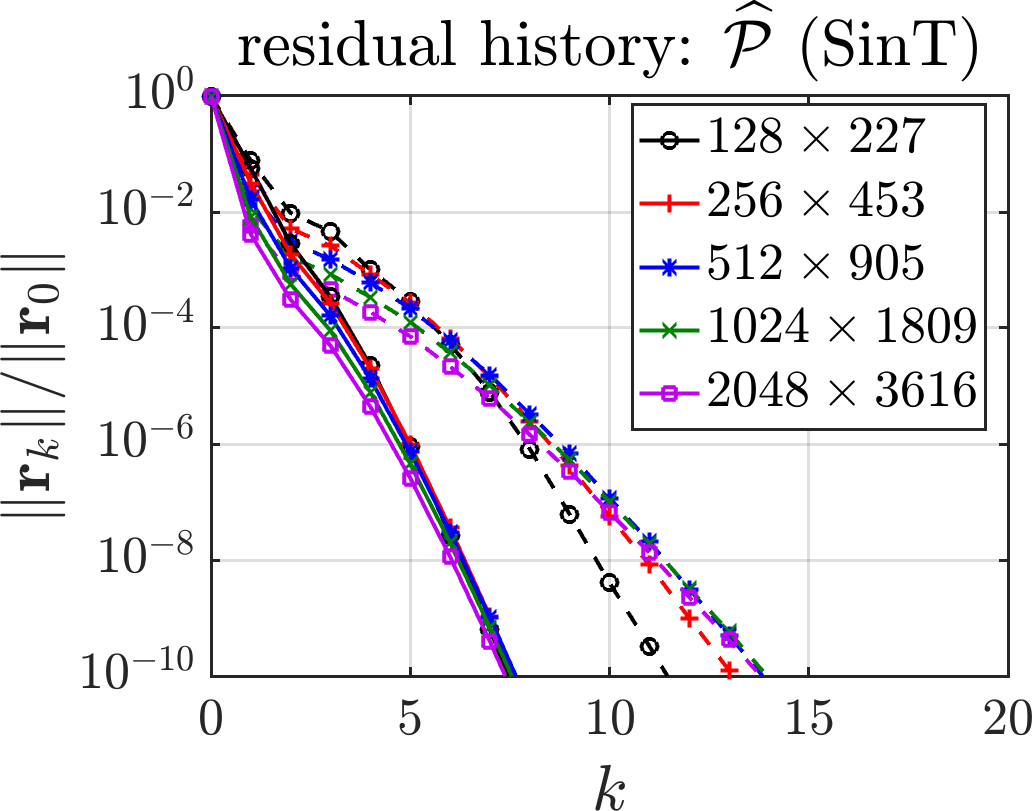}
\hspace{-1.5ex}
\includegraphics[width=0.35\textwidth]{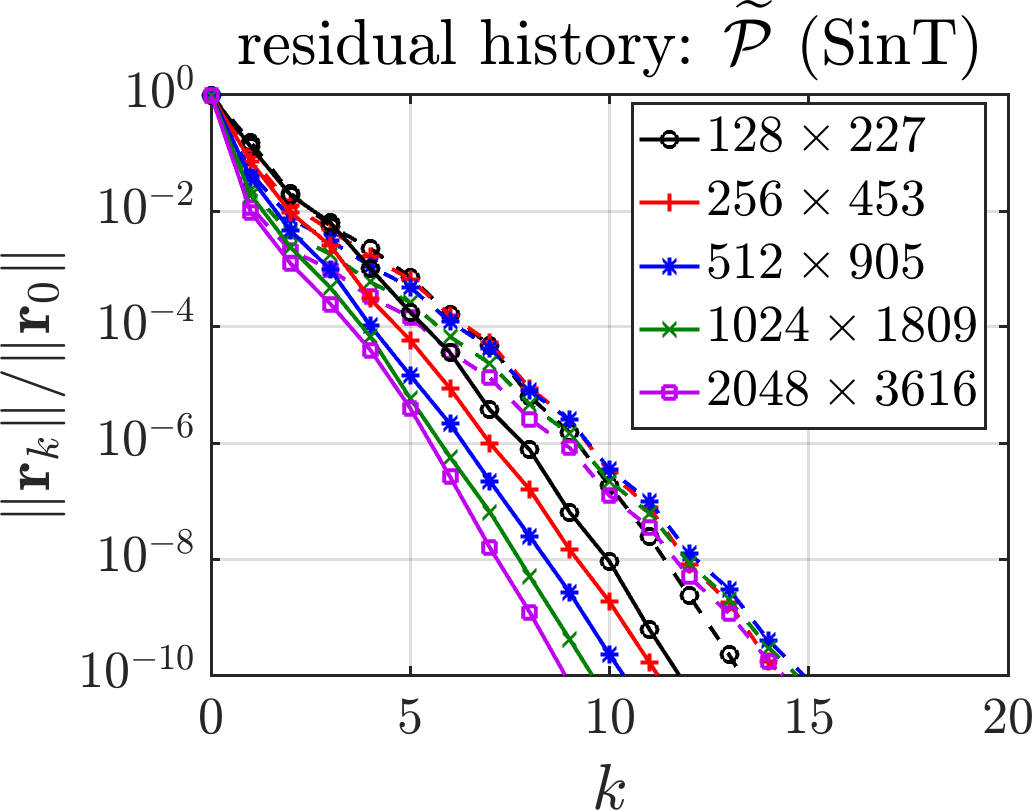}
}
\vspace{1ex}
\centerline{
\includegraphics[width=0.35\textwidth]{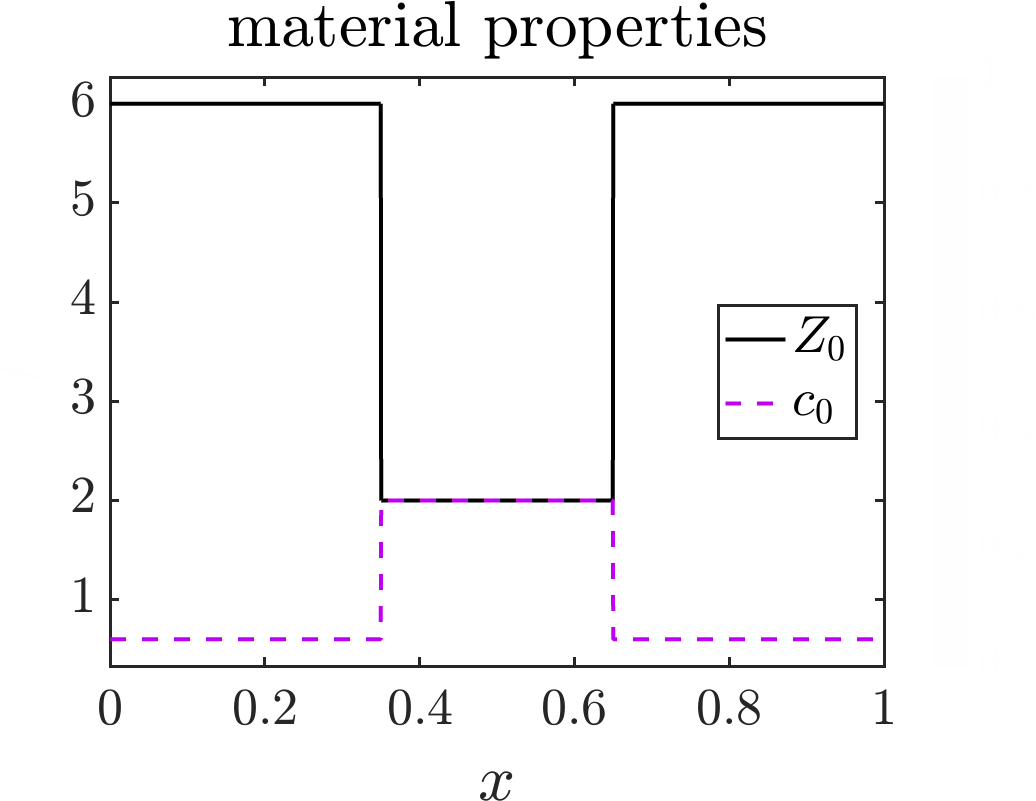}
\hspace{-5ex}
\includegraphics[width=0.35\textwidth]{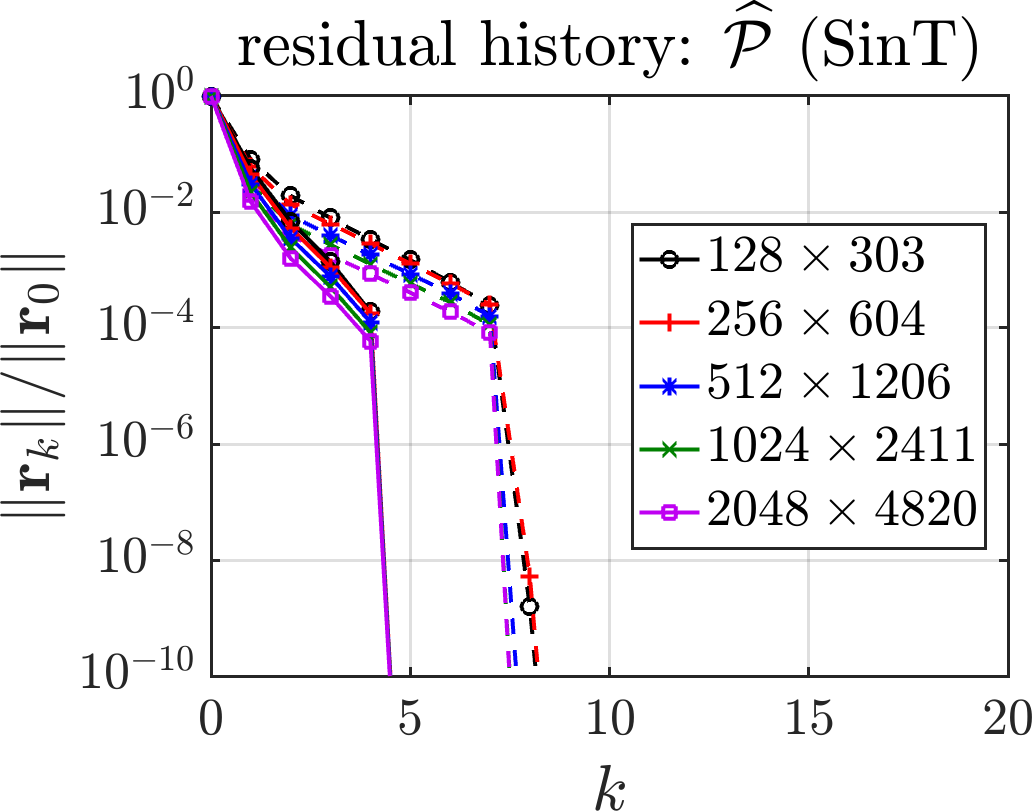}
\hspace{-1.5ex}
\includegraphics[width=0.35\textwidth]{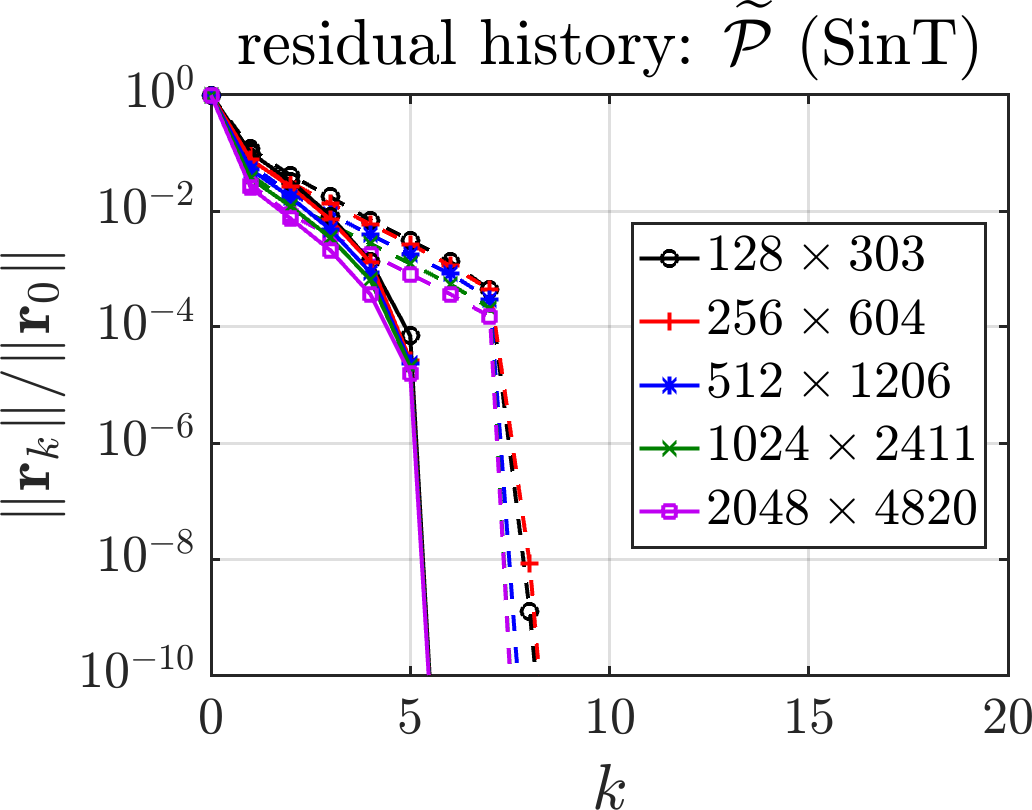}
}
\vspace{1ex}
\centerline{
\includegraphics[width=0.35\textwidth]{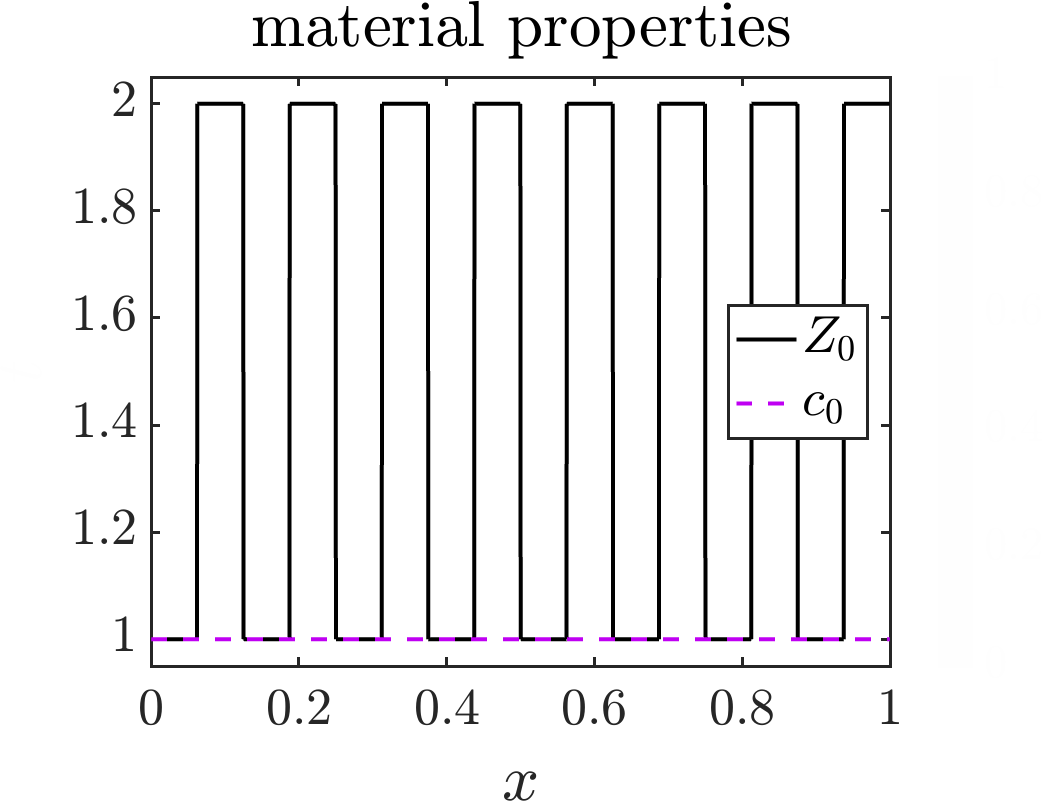}
\hspace{-5ex}
\includegraphics[width=0.35\textwidth]{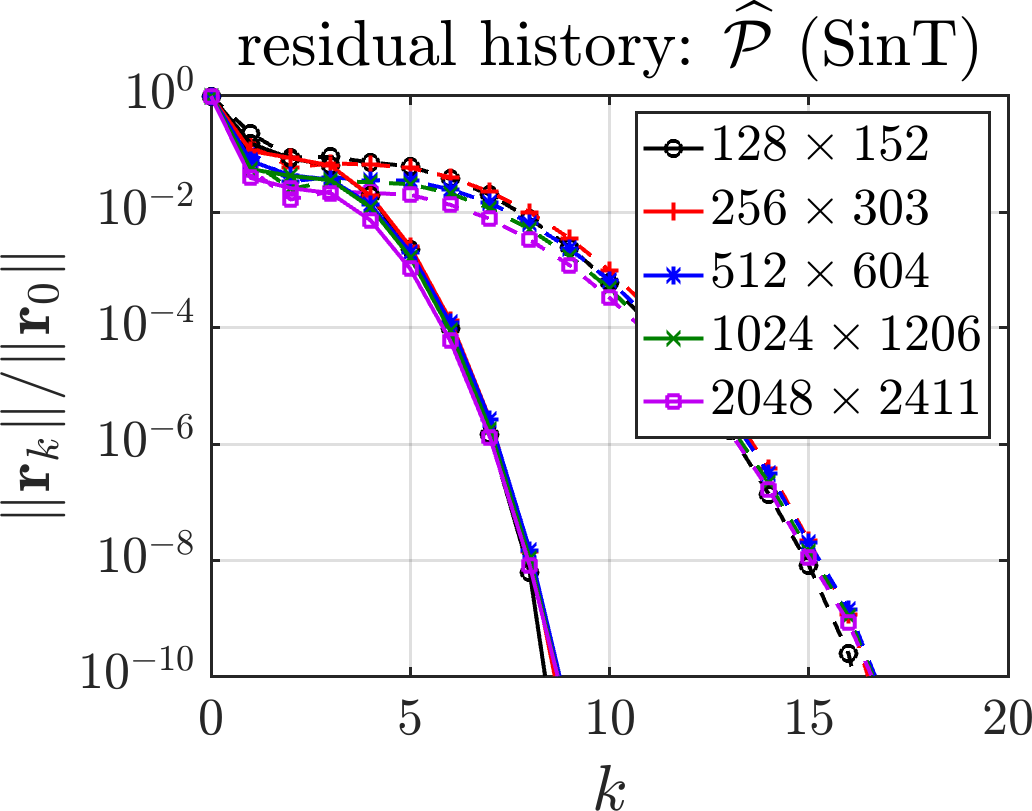}
\hspace{-1.5ex}
\includegraphics[width=0.35\textwidth]{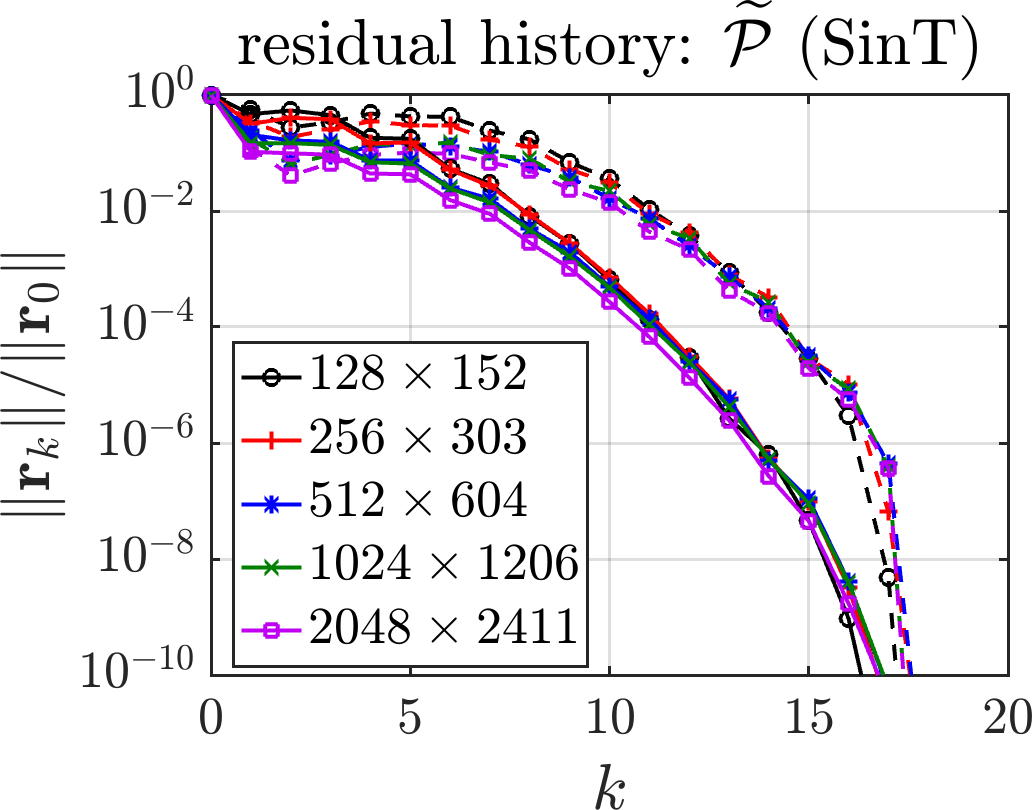}
}
\vspace{1ex}
\centerline{
\includegraphics[width=0.35\textwidth]{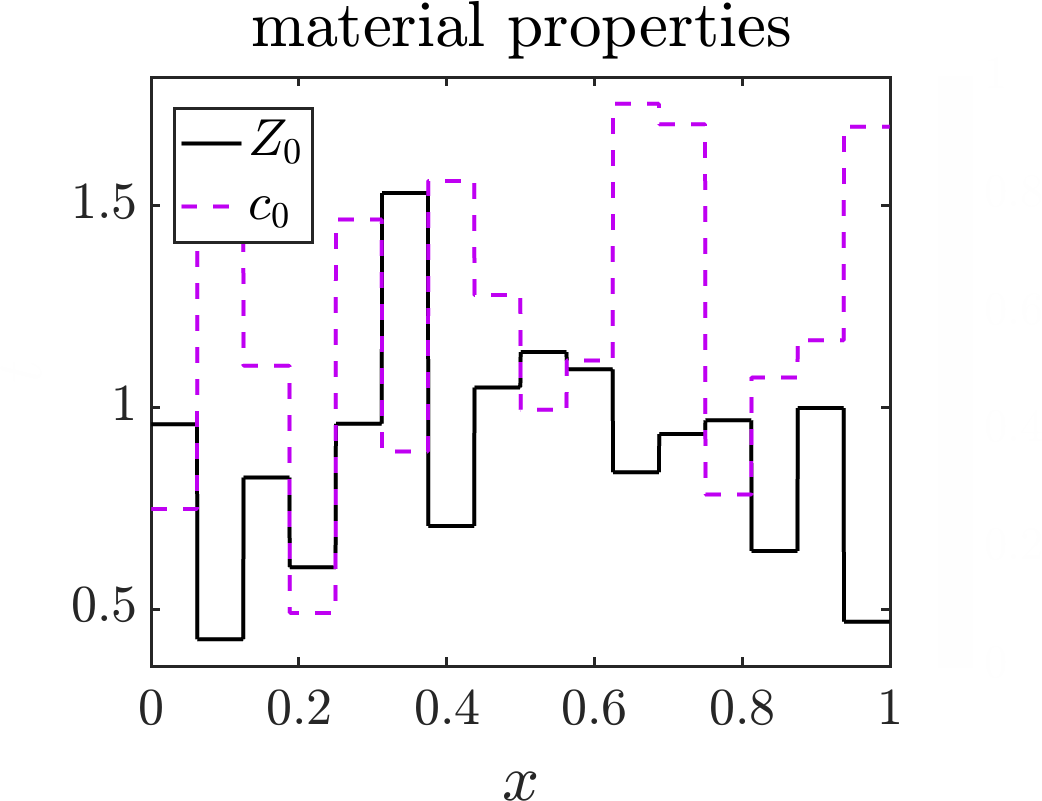}
\hspace{-5ex}
\includegraphics[width=0.35\textwidth]{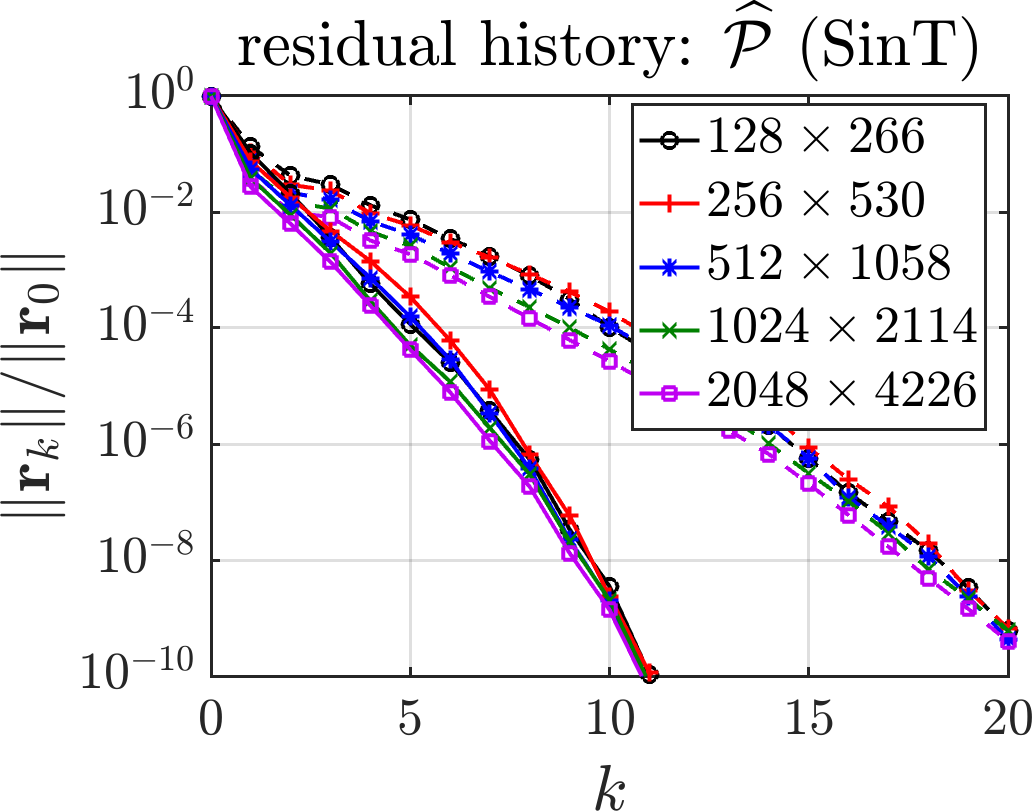}
\hspace{-1.5ex}
\includegraphics[width=0.35\textwidth]{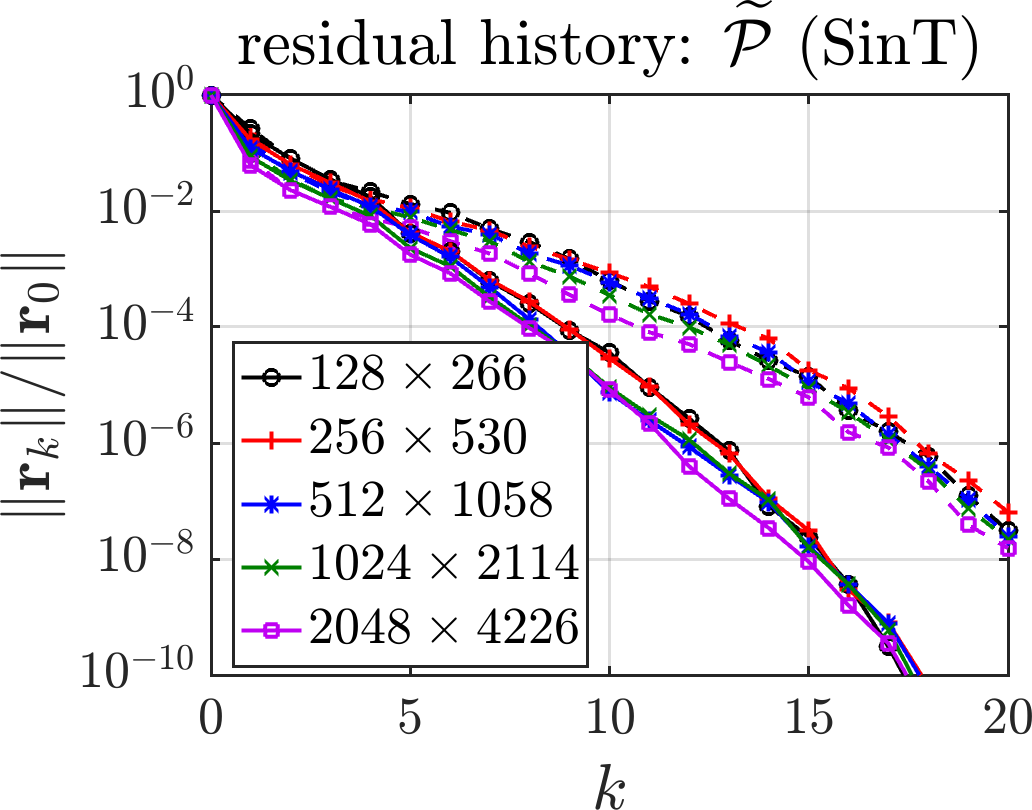}
}
\caption{\textbf{Left:} Material properties \eqref{eq:mat-param-2}, \eqref{eq:mat-param-3}, \eqref{eq:mat-param-4}, \eqref{eq:mat-param-5} ordered from top to bottom. 
\textbf{Middle:} Residual history using preconditioners $\wh{{\cal P}}_L$ (solid lines) and $\wh{{\cal P}}_D$ (dashed lines) in \eqref{eq:prec-def}.
\textbf{Right:} Residual history using preconditioners $\wt{{\cal P}}_L$ (solid lines) and $\wt{{\cal P}}_D$ (dashed lines) in \eqref{eq:prec-approx-def} which are based on \underline{approximate} diagonal blocks.
All preconditioners are applied exactly via sequential time-stepping (SinT).
Legend entries correspond to space-time mesh resolutions of $n_x \times n_t$.
}
\label{fig:acoustic-prec-exact}
\end{figure}

We consider solving \eqref{eq:acoustic} on the space-time domain $(x,t) \in (0, 1) \times (0, 1]$, and subject to the initial conditions
\begin{align}
u(x, 0) = 0,
\quad
p(x, 0) 
=
\begin{cases}
\tfrac{1}{4} \big[ 7 - 3 \cos ( 10 \pi x - 4 \pi  ) \big], 
\quad &x \in (0.4, 0.6), \\
1, 
\quad &\textrm{else}.
\end{cases}
\end{align}
We consider five different media, with sound speeds and impedances given by:
\begin{subequations} \label{eq:acoustics-mat-params-examples}
\begin{alignat}{2}
\label{eq:mat-param-1}
c_0(x) &= 1 + \tfrac{1}{2} \sin (10 \pi x), 
\quad 
&&Z_0(x) = 1, \\
\label{eq:mat-param-2}
c_0(x) &= 1 + \tfrac{1}{2} \sin (10 \pi x), 
\quad 
&&Z_0(x) =1 + \tfrac{1}{4} \cos(10 \pi x), \\
\label{eq:mat-param-3}
c_0(x) &= 
\begin{cases}
2, \quad &x \in (0.35, 0.65), \\
0.6, \quad &\textrm{else},
\end{cases}
\quad\quad
&&Z_0(x) = 
\begin{cases}
2, \quad &x \in (0.35, 0.65), \\
6, \quad &\textrm{else},
\end{cases}
\\
\label{eq:mat-param-4}
c_0(x) &= 
1,
\quad\quad
&&Z_0(x) = 
\begin{cases}
1, \quad & \textrm{mod} ( \lfloor 16 x \rfloor, 2 ) = 0, \\
2, \quad & \textrm{else}. 
\end{cases}
\\
\label{eq:mat-param-5}
c_0(x) &= 
\textrm{rand}_1 ( \lfloor 16 x \rfloor ),
\quad\quad
&&Z_0(x) = 
\textrm{rand}_2 ( \lfloor 16 x \rfloor ).
\end{alignat}
\end{subequations}
Plots of the material parameters and space-time contours of the associated pressures $p(x,t)$ are shown in the left and middle columns of \cref{fig:acoustic-prec-inexact}, respectively---note that the equations in \eqref{eq:mat-param-5} denote a randomly layered material with 16 layers.
For \eqref{eq:mat-param-1} the characteristic variables decouple globally since $Z_0$ is constant, while for all other material parameters they do not.
The parameters \eqref{eq:mat-param-1}--\eqref{eq:mat-param-3} are used in \cite{Bale_etal_2003}, and \eqref{eq:mat-param-4} and \eqref{eq:mat-param-5} are simplified parameters from \cite{LeVeque_2004} and \cite{Fogarty-LeVeque-1999}, respectively. 

Shown in \cref{fig:acoustic-prec-exact,fig:acoustic-prec-inexact} are two sets of tests corresponding to the exact, sequential-in-time, application of the preconditioners $\wh{{\cal P}}$ and $\wt{{\cal P}}$ in \eqref{eq:prec-def} and \eqref{eq:prec-approx-def}, and their inexact, parallel-in-time application via one MGRIT V-cycle, respectively.
In our tests, the time-step size is chosen such that $\delta t = 0.85 h / \max_x c_0(x) $.
The initial space-time iterate $\bm{q}_0$ is chosen with entries drawn randomly from a standard normal distribution, and the convergence metric reported is the 2-norm of the space-time residual of \eqref{eq:godunov-all-at-once} relative to its initial value.
The coarsening factor used to create the CF-splitting for the F-relaxation in Line \ref{ln:lin-F-relax} of \cref{alg:char-prec} is $m = 8$.

Consider first \cref{fig:acoustic-prec-exact} to understand best-case convergence of the algorithm wherein the preconditioners $\wh{\cal {P}}$ and $\wt{\cal {P}}$ are applied exactly.
The simplest material parameters in \eqref{eq:mat-param-1} are omitted from these tests because the iteration converges to the exact solution in a single iteration due to the characteristic variables globally decoupling.
Remarkably, we see that in almost all cases the convergence rate seems  independent of problem size.
Unsurprisingly, we see that in all cases the lower triangular preconditioner (solid) converges in fewer iterations than the diagonal preconditioner (dashed).
We also note there is a mild trend of convergence deteriorating when moving from preconditioners $\wh{\cal {P}}$ using the true diagonal blocks to preconditioners $\wt{\cal {P}}$ using approximate diagonal blocks.
This deterioration is most significant for the lower triangular preconditioners in the bottom two rows, corresponding to material parameters \eqref{eq:mat-param-4}, \eqref{eq:mat-param-5}.

\begin{figure}
\centerline{
\includegraphics[width=0.35\textwidth]{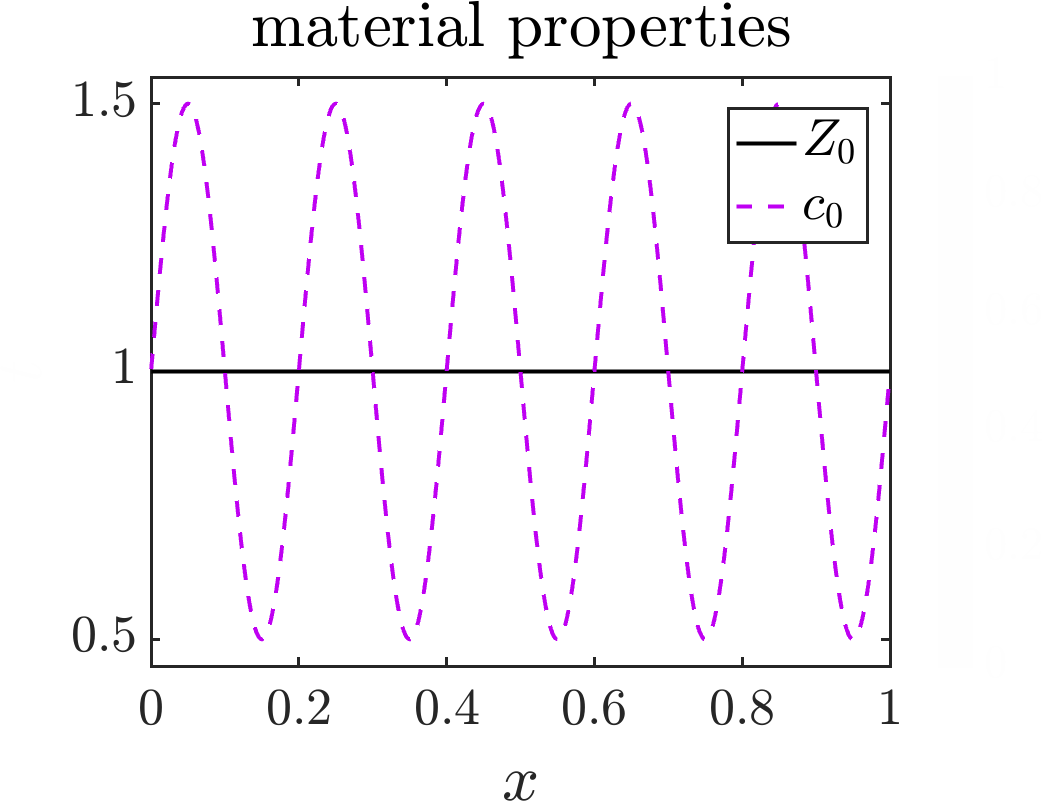}
\hspace{-5ex}
\includegraphics[width=0.35\textwidth]{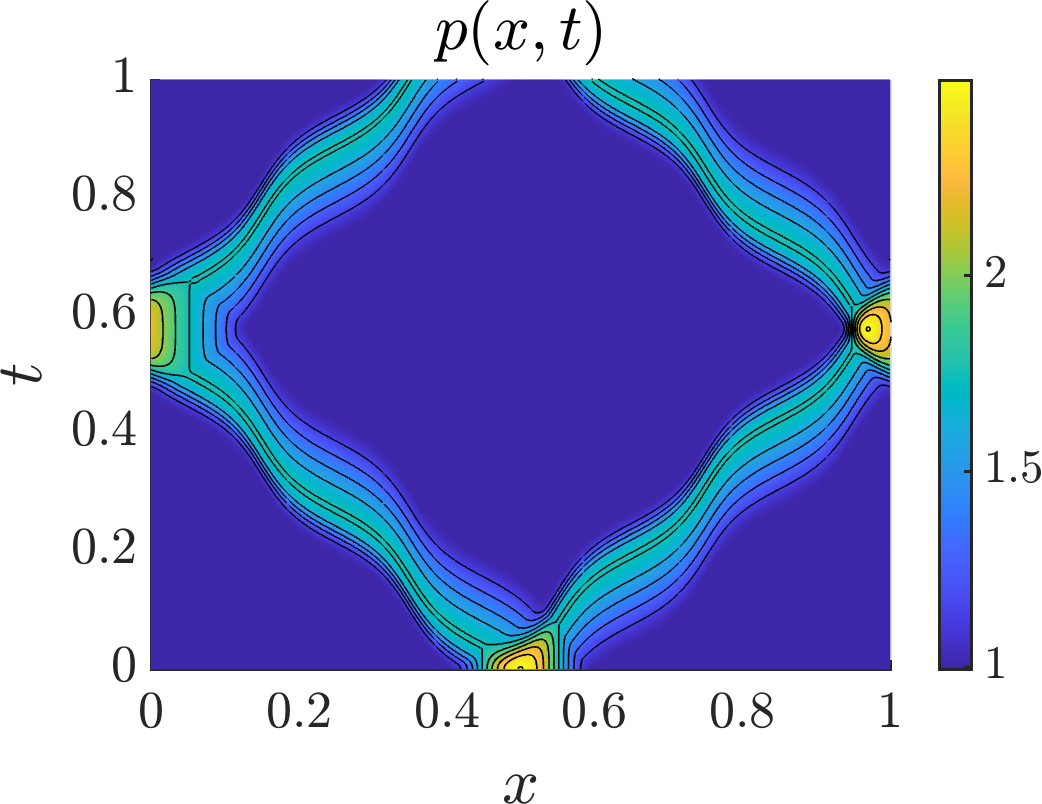}
\hspace{-1ex}
\includegraphics[width=0.35\textwidth]{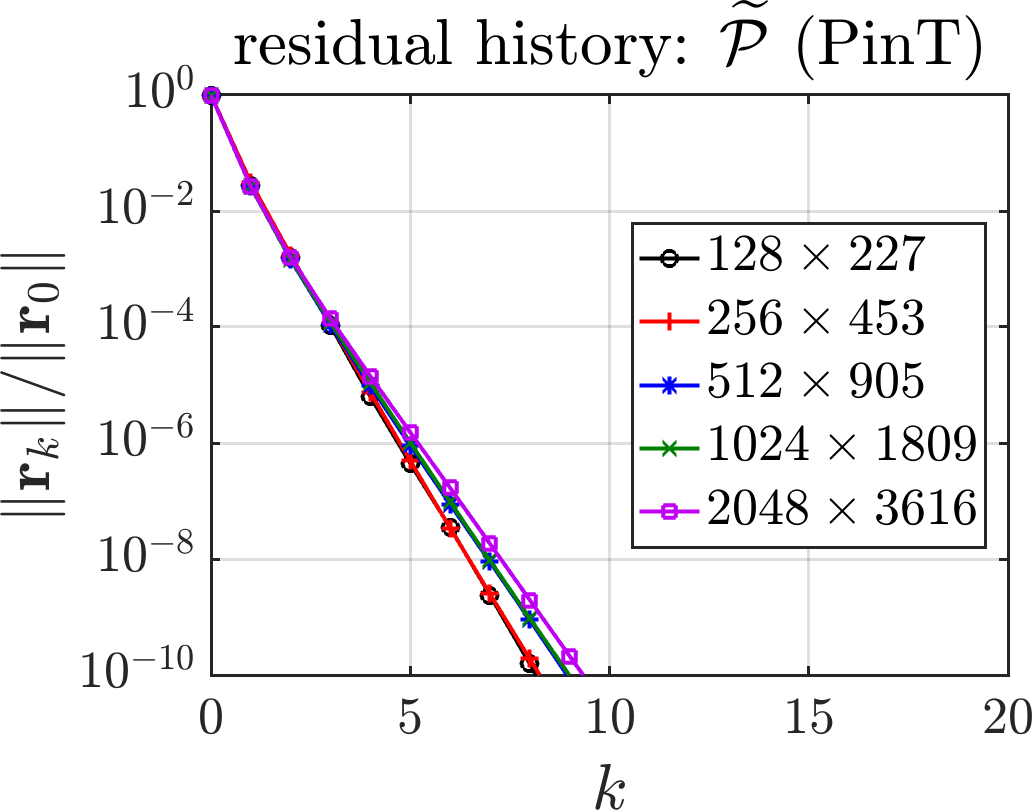}
}
\vspace{1ex}
\centerline{
\includegraphics[width=0.35\textwidth]{figures/acoustics-ex2-mat-pa}
\hspace{-5ex}
\includegraphics[width=0.35\textwidth]{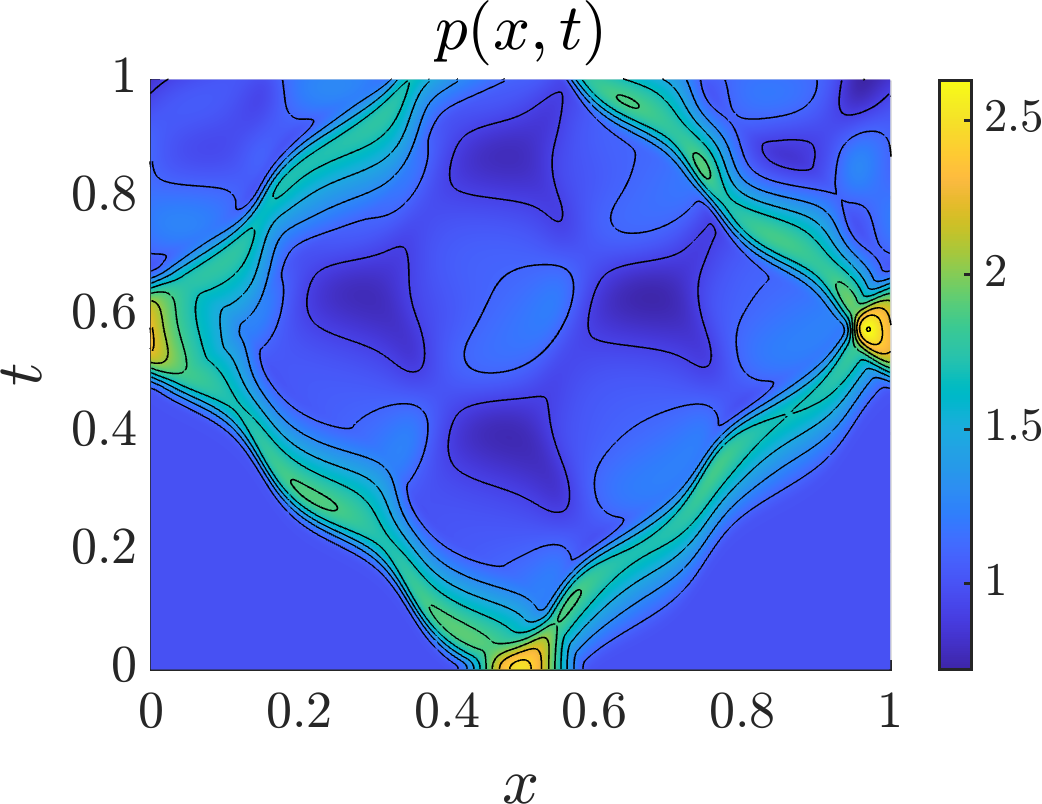}
\hspace{-1ex}
\includegraphics[width=0.35\textwidth]{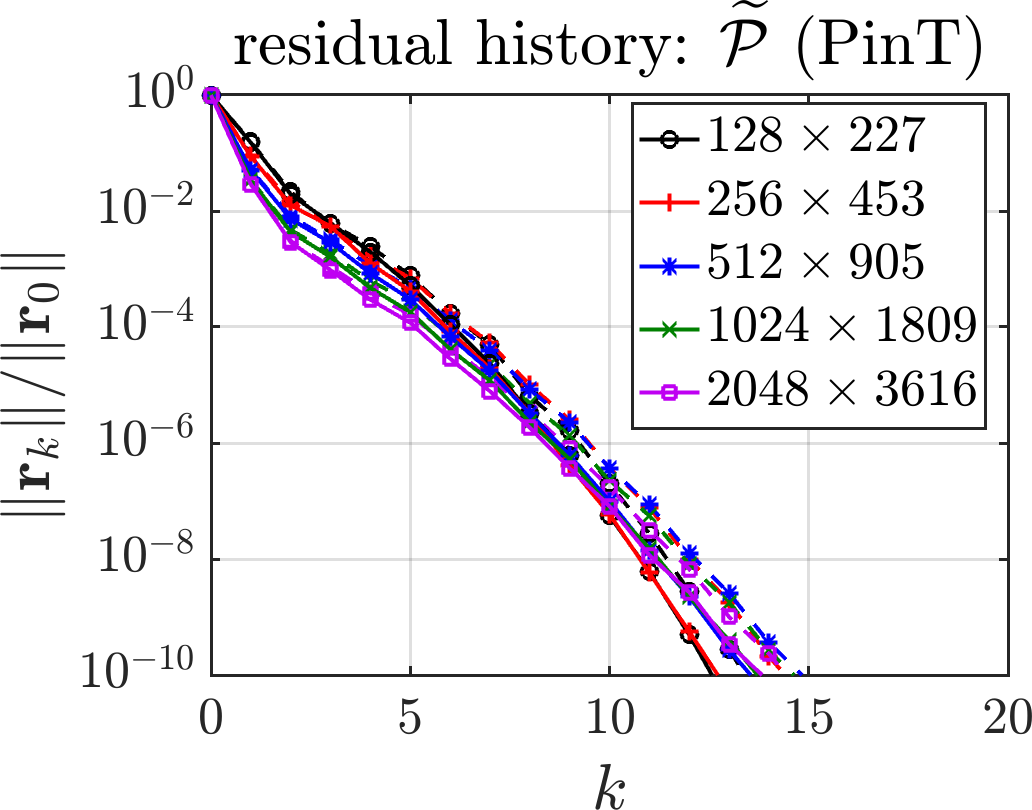}
}
\vspace{1ex}
\centerline{
\includegraphics[width=0.35\textwidth]{figures/acoustics-ex3-mat-pa}
\hspace{-5ex}
\includegraphics[width=0.35\textwidth]{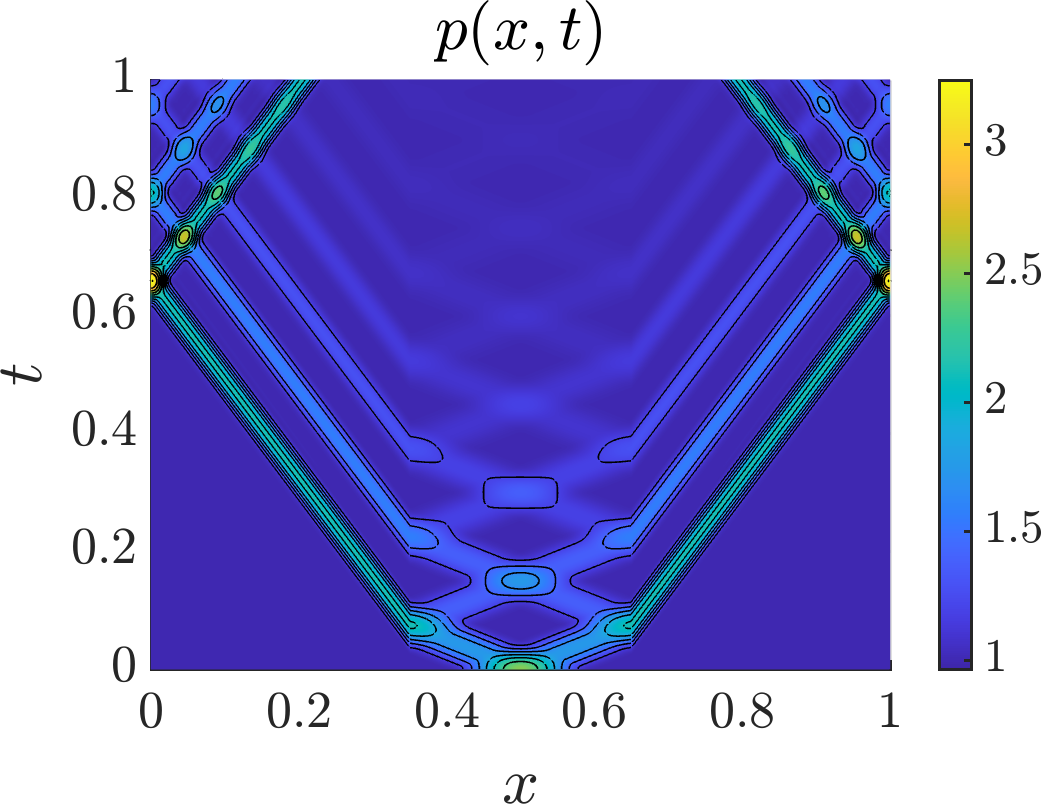}
\hspace{-1ex}
\includegraphics[width=0.35\textwidth]{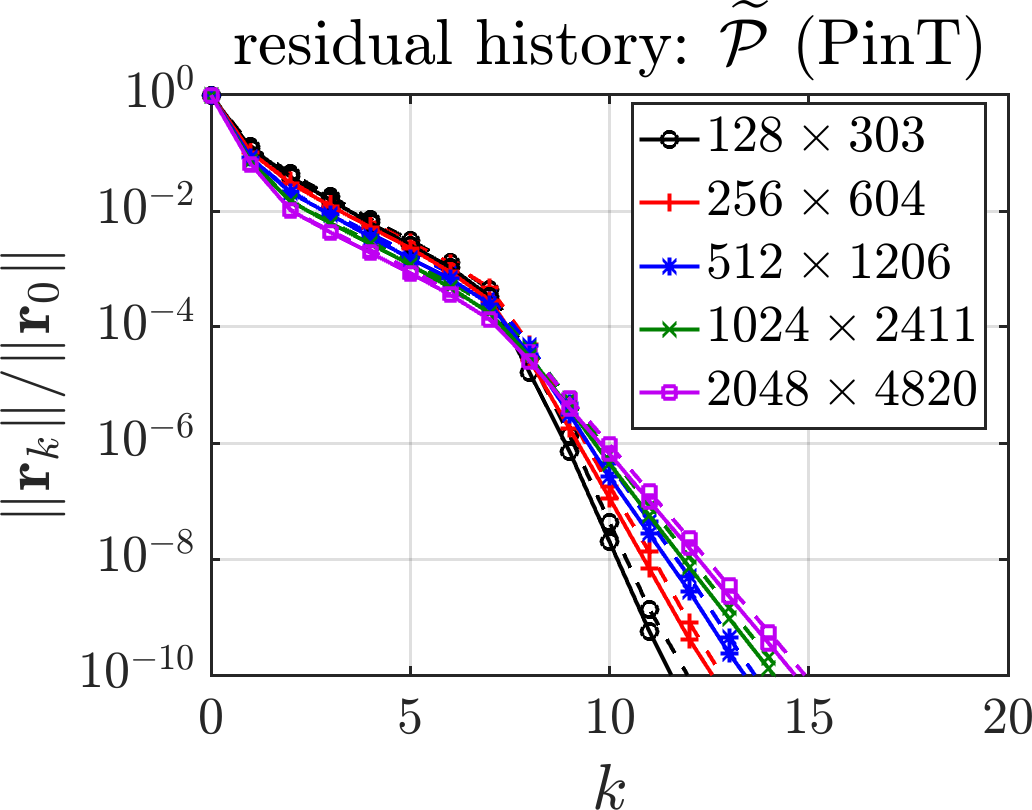}
}
\vspace{1ex}
\centerline{
\includegraphics[width=0.35\textwidth]{figures/acoustics-ex5-mat-pa}
\hspace{-5ex}
\includegraphics[width=0.35\textwidth]{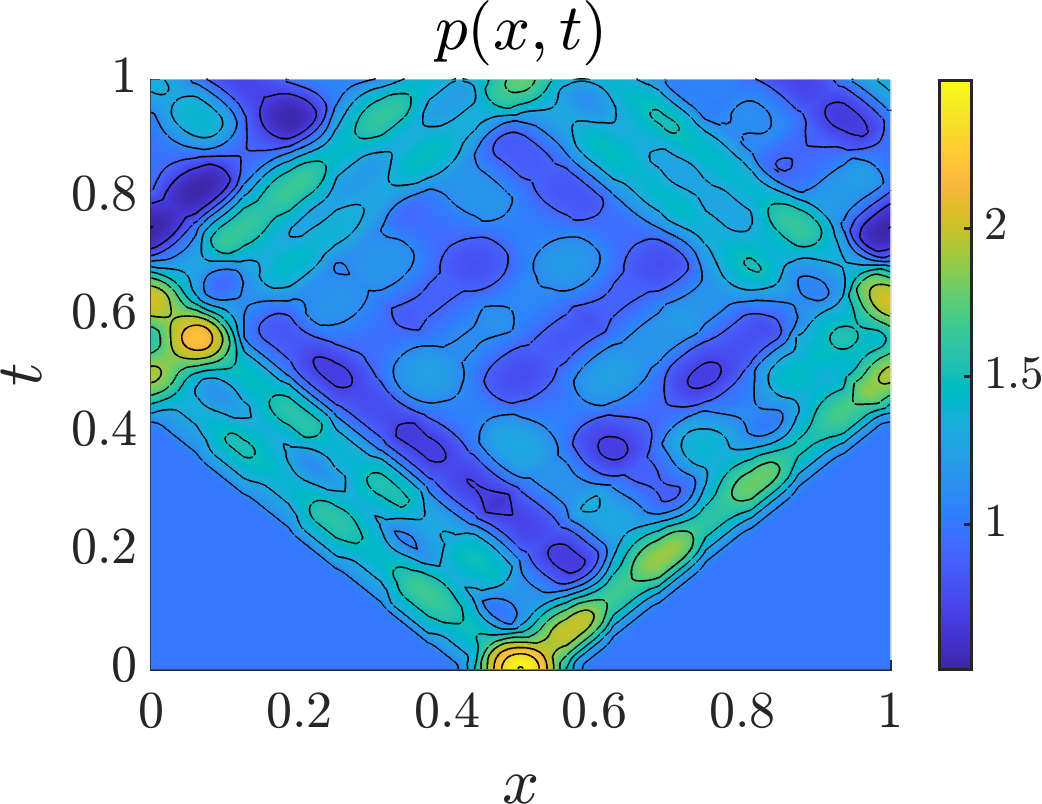}
\hspace{-1ex}
\includegraphics[width=0.35\textwidth]{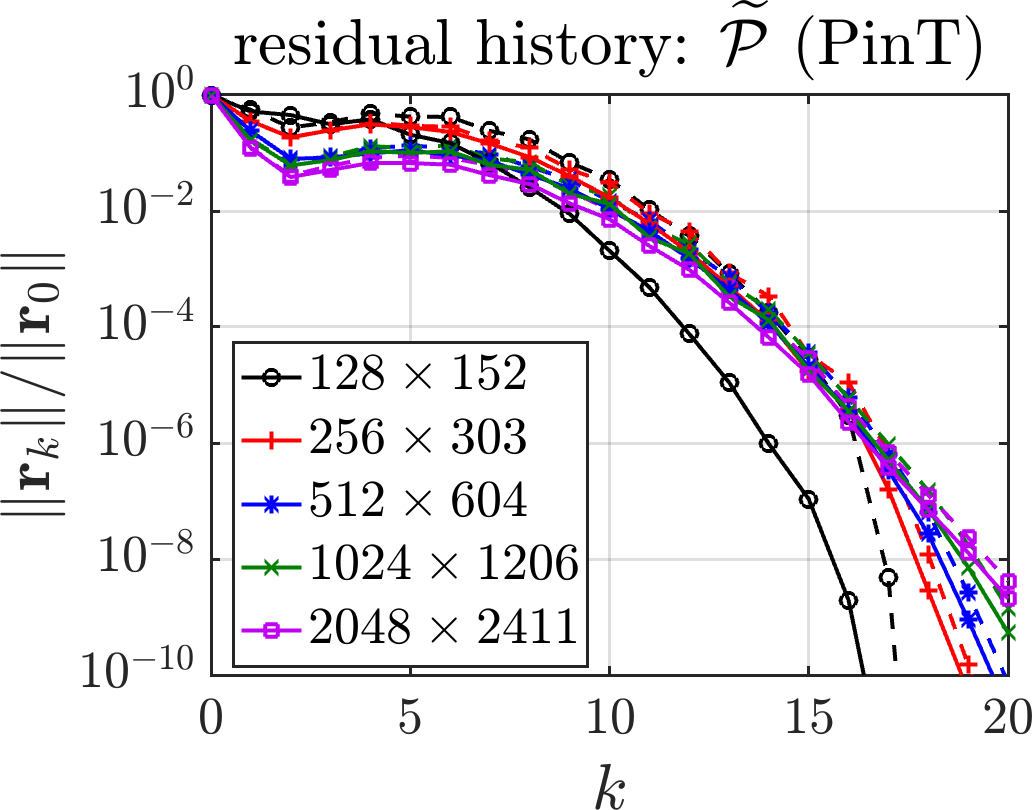}
}
\vspace{1ex}
\centerline{
\includegraphics[width=0.35\textwidth]{figures/acoustics-ex6-mat-pa}
\hspace{-5ex}
\includegraphics[width=0.35\textwidth]{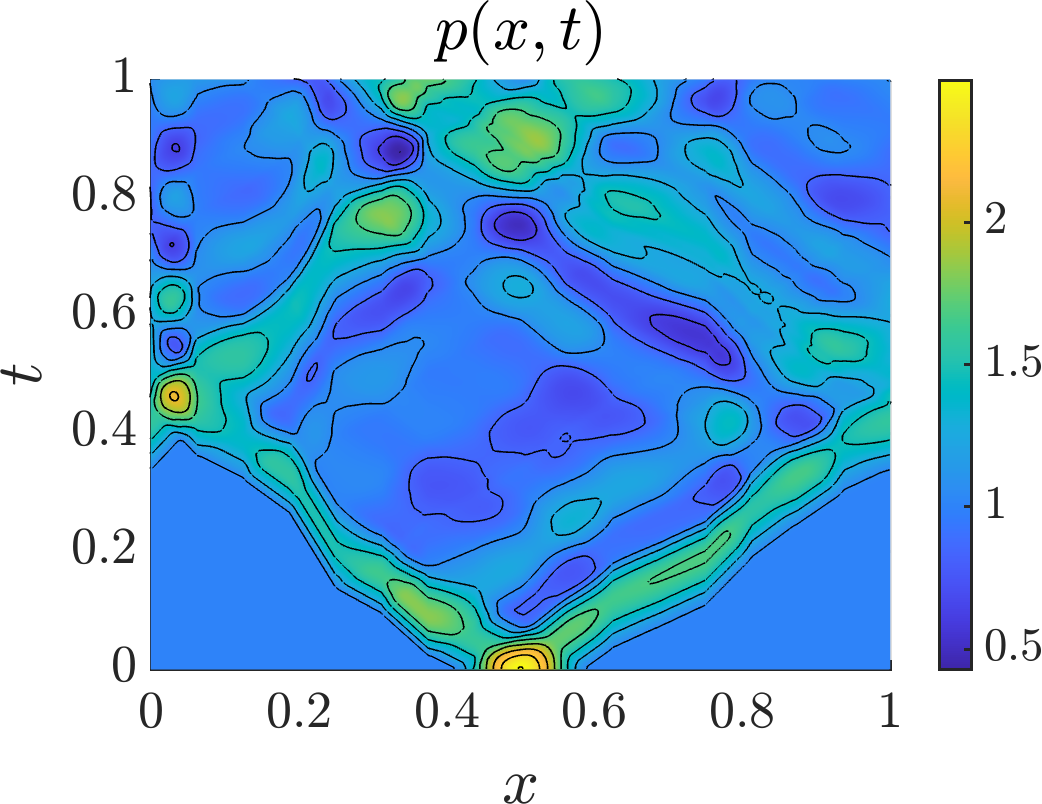}
\hspace{-1ex}
\includegraphics[width=0.35\textwidth]{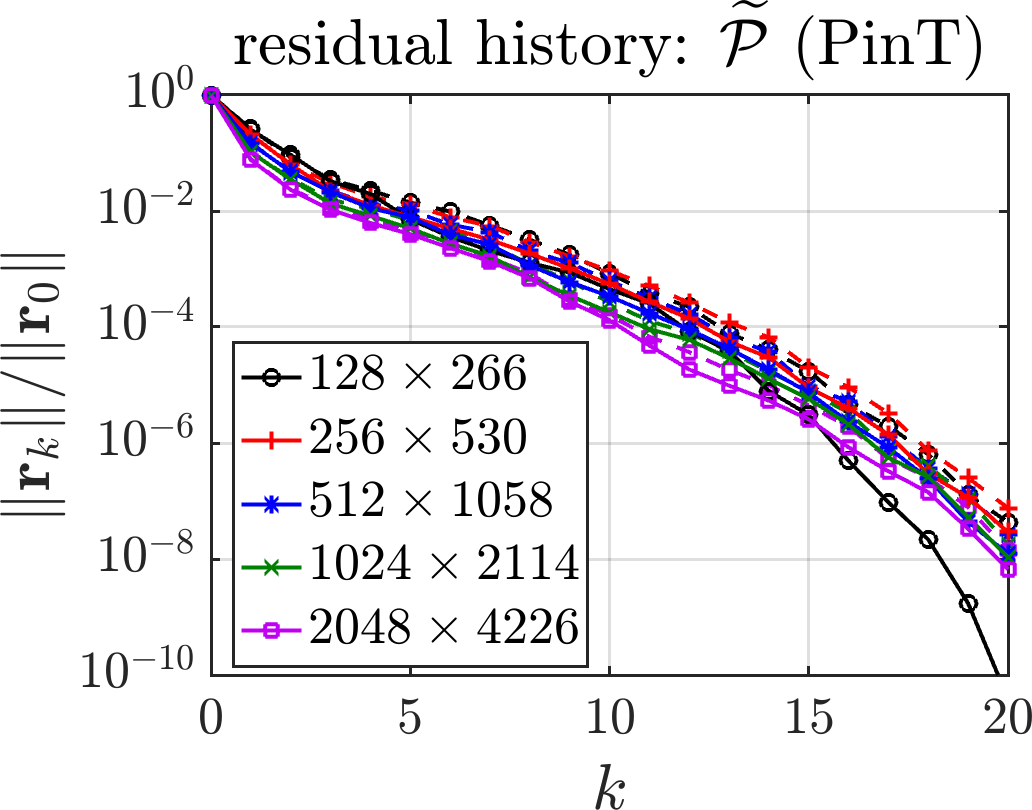}
}
\caption{Test problems for the acoustics equations \eqref{eq:acoustic}.
Each of the five rows in the figure corresponds to each of the five material parameters in \eqref{eq:acoustics-mat-params-examples}.
\textbf{Left}: Material parameters \eqref{eq:acoustics-mat-params-examples}. 
\textbf{Middle}: Space-time contours of the resulting pressure component of the solution vector; note these contours correspond to solves using $n_x = 2048$.
\textbf{Right}: Residual histories when using preconditioners $\wt{{\cal P}}$ in \eqref{eq:prec-approx-def} with diagonal blocks approximately inverted with a single MGRIT iteration (PinT). 
Dashed lines correspond to $\wt{{\cal P}}_D$ and solid lines correspond to $\wt{{\cal P}}_L$.
Legend entries correspond to space-time mesh resolutions of $n_x \times n_t$.}
\label{fig:acoustic-prec-inexact}
\end{figure}

Now let us turn our attention to \cref{fig:acoustic-prec-inexact}, corresponding to the inexact, parallel-in-time application of the approximate preconditioners $\wt{{\cal P}}$ in \eqref{eq:prec-approx-def} that employ approximate diagonal blocks $\wt{{\cal A}}_{ii}$.
In these tests, we apply a single MGRIT V-cycle to approximately invert the Godunov discretizations $\wt{{\cal A}}_{11}$ and $\wt{{\cal A}}_{22}$.
MGRIT coarsens by a factor of $m = 8$ on each level, and continues coarsening until fewer than two points in time would result. 
Each V-cycle uses F-relaxation on the finest level, and FCF-relaxation on all coarser levels.
Each V-cycle is initialized with the right-hand side vector.
Let $l \in \mathbb{N}_0$ denote the MGRIT level index, and $\wt{\Phi}_{ii}^{m^{l} \delta t}$ the time-stepping operator on level $l$.
Then on coarse levels $l > 0$ we use a modified semi-Lagrangian discretization \cite{DeSterck_etal_2023_MOL,DeSterck_etal_2023_SL} 
\begin{align} \label{eq:Phi-kk-multilevel}
\wt{\Phi}_{ii}^{m^{l} \delta t}
=
\big(
I - \diag( \bm{\gamma}_{ii} ) {\cal D}_2
\big)^{-1}
{\cal S}_{ii}^{m^{l} \delta t}
\approx
\prod \limits_{n = 0}^{m-1}
\wt{\Phi}_{ii}^{m^{l-1} \delta t}
\end{align}
where ${\cal S}_{ii}^{m^{l \delta t}}$ is a first-order accurate semi-Lagrangian discretization on grid $l$ of linear advection equations with wave-speeds $-c_0$ and $+c_0$ for $i = 1$ and $i = 2$, respectively.
The vector $\bm{\gamma}_{ii} \in \mathbb{R}^{n_x}$ is chosen as in \cite{DeSterck_etal_2023_SL} to minimize differences in truncation error of ${\cal S}_{ii}^{m^l \delta t}$ and $m$ applications of $\wt{\Phi}_{ii}^{m^{l-1} \delta t}$; the specific coefficients are determined via a simple analysis of the discretizations involved (see our code for specific formulae).
The matrix ${\cal D}_2 \in \mathbb{R}^{n_x}$ is the standard second-order accurate Laplacian, $ \big[ {\cal D}_2 \big]_{i} 
=
\left[1, \, -2, \, 1 \right] /h^2$, and the action of the inverse $\big(
I - \diag( \bm{\gamma}_{ii} ) {\cal D}_2
\big)^{-1}$ in \eqref{eq:Phi-kk-multilevel} is approximated with GMRES initialized with a zero vector and iterated until the relative residual is 0.01, or a maximum of 10 iteration is performed.

Considering \cref{fig:acoustic-prec-inexact}, we see that the convergence rate of the solver is independent of the mesh resolution.
We note that convergence is remarkably fast given the complicated nature of these problems, especially since only a single MGRIT V-cycle is used per inner iteration.
Comparing the right column of \cref{fig:acoustic-prec-inexact} with that in \cref{fig:acoustic-prec-exact}, generally speaking, there has been some degradation in convergence when moving from the exact to inexact application of $\wt{{\cal P}}^{-1}$. In particular, the lower triangular preconditioner $\wt{{\cal P}}_{L}$ loses its advantage over the diagonal preconditioner $\wt{{\cal P}}_{D}$ that is present in \cref{fig:acoustic-prec-exact} where exact inversions were used. 
Of course, the advantage of $\wt{{\cal P}}_{L}$ over $\wt{{\cal P}}_{D}$ could be regained through more accurate inner solves, e.g., using two MGRIT V-cycles instead of one; however, more accurate inner solves come at an increased computational cost, so it is unclear whether more accurate inner solves would be beneficial overall.
A more definitive answer to these questions would arise from running these solvers on a parallel machine and comparing their time-to-solution. 
We leave such parallel tests to future work.
It is also worth pointing out that in these tests we are significantly oversolving with respect to the discretization error.
%

\section{Conservative nonlinear systems: Introduction and background}
\label{sec:cons}

We now consider one-dimensional nonlinear conservation law systems of the form
\begin{align} \tag{cons-law}
\label{eq:cons}
\frac{\partial \bm{q}}{\partial t} + \frac{\partial \bm{f}(\bm{q})}{\partial x} = \bm{0},
\quad
(x, t) \in \Omega \times (0, T],
\quad
\bm{q}(x, 0) = \bm{q}_{\textrm{init}}(x),
\end{align}
with solution $\bm{q} = \bm{q}(x, t) = \big( q^1, \ldots, q^{\ell} \big)^\top \in \mathbb{R}^{\ell}$, and flux function $\bm{f} = \big( f^1, \ldots, f^{\ell} \big)^\top \in \mathbb{R}^{{\ell}}$.
The flux Jacobian $A(\bm{q}) := \bm{f}'(\bm{q}) \in \mathbb{R}^{\ell \times \ell}$ is assumed to have a full set of eigenvectors and real eigenvalues throughout the domain such that \eqref{eq:cons} is hyperbolic.
The flux Jacobian satisfies the decomposition $A = R \Lambda R^{-1}$, where $R = [\bm{r}^1 \, \ldots \, \bm{r}^{\ell}]$ with right eigenvectors $\bm{r}^{k} \in \mathbb{R}^{\ell}$, and $\Lambda = \diag(\lambda^1, \ldots, \lambda^{\ell})$, with eigenvalues ordered from smallest to largest, $\lambda^1 \leq \lambda^2 \ldots \leq \lambda^{\ell}$.

Numerical results later in \cref{sec:cons-num-res} consider two equation systems of the form \eqref{eq:cons}.
The first such system is the 1D shallow water equations \cite[Chapter 13]{LeVeque_2004}:
\begin{align} \tag{shallow-water}
\label{eq:SWE}
\frac{\partial }{\partial t}
\begin{bmatrix}
h \\
h u 
\end{bmatrix}
+
\frac{\partial }{\partial x}
\begin{bmatrix}
h u \\
h u^2 + \tfrac{1}{2} g h^2
\end{bmatrix}
=
\bm{0},
\quad
g \equiv \textrm{constant}.
\end{align}
Here, $h$ is the water height, and $u$ is its velocity, and we set $g = 1$.
The second model problem is the 1D Euler equations of gas dynamics \cite[Chapter 14]{LeVeque_2004}:
\begin{align} \tag{Euler}
\label{eq:euler}
\frac{\partial }{\partial t}
\begin{bmatrix}
\rho \\
\rho u \\
E
\end{bmatrix}
+
\frac{\partial }{\partial x}
\begin{bmatrix}
\rho u \\
\rho u^2 + p \\
(E + p) u
\end{bmatrix}
=
\bm{0},
\end{align}
closed with the equation of state for an ideal polytropic gas,
$
E = \frac{p}{\gamma - 1} + \frac{1}{2} \rho u^2, \, \gamma \equiv \textrm{constant}.
$
Here $\rho$ is the gas density, $u$ its velocity, $E$ its total energy, $p$ its pressure, and we set $\gamma = 7/5$.
For completeness, the the decompositions $A = R \Lambda R^{-1}$ for \eqref{eq:SWE} and \eqref{eq:euler} can be found in Supplementary Materials Section \ref{SMsec:flux-jacobians}.

\subsection{Finite-volume discretization}

The system \eqref{eq:cons} is discretized in space on a mesh of $n_x$ FV cells of equal width $h$, with the $i$th such cell being $x \in [x_{i - 1/2}, x_{i+1/2}]$, $i \in \{ 1, \ldots, n_x \}$.
The temporal domain $t \in [0, T]$ is discretized with a grid of $n_t$ points, $\{ t_n \}_{n = 0}^{n_t -1 }$, $t_n = n \delta t$, with constant spacing $\delta t$.
We consider a first-order accurate FV method paired with forward Euler integration in time.
Ultimately, this discretization results in the one-step scheme  
\begin{align} \label{eq:cons-Phi-def}
\bm{q}_i^{n+1}
 =
 \bm{q}_i^{n}
 -
 \frac{\delta t }{h} 
 \Big[ \wh{\bm{f}}_{i+1/2} (\bm{q}^n) - \wh{\bm{f}}_{i-1/2} (\bm{q}^n) \Big]
 =: 
\big( \Phi( \bm{q}^n ) \big)_i
\in
\mathbb{R}^{\ell},
\end{align}
with $ \bm{q}_i^{n}$ approximating the average of the solution of \eqref{eq:cons} in cell $i$ at time $t_n$.
More generally, we use the notation that if $\bm{\zeta} \in \mathbb{R}^{\ell n_x}$ is a stacked vector associated with the spatial mesh, then $\bm{\zeta}_i \in \mathbb{R}^{\ell}$ denotes a vector with the components of $\bm{\zeta}$ associated with the $i$th FV cell containing $\ell$ variables.
In \eqref{eq:cons-Phi-def}, $\wh{\bm{f}}_{i + 1/2} (\bm{q})$ is the numerical flux function, and in this work is the so-called Roe flux:
\begin{align} \label{eq:Roe-simple}
\wh{\bm{f}}_{i+1/2} (\bm{q}) = 
\frac{1}{2}
\Big[
\big(
\bm{f}(\bm{q}_{i+1}) + \bm{f}(\bm{q}_{i})
\big)
-
\big| A^{*}_{i+1/2}( \bm{q} ) \big|
\big(
\bm{q}_{i+1} - \bm{q}_{i}
\big)
\Big] 
\in 
\mathbb{R}^{\ell}.
\end{align}
Here $A^*_{i+1/2} = A^*_{i+1/2} ( \bm{q}_i, \bm{q}_{i+1}  )$ is a linearization of the flux Jacobian $A( \bm{q} ) := \bm{f}'( \bm{q} ) $ about a specially chosen average between $\bm{q}_{i}$ and $\bm{q}_{i+1}$;
this flux originated in \cite{Roe1981}, and we point the reader to LeVeque \cite[Chapter 15.3]{LeVeque_2004} for details on constructing it for our model problems, see \eqref{eq:SWE} and \eqref{eq:euler}.
This matrix has the eigen-decomposition $A^*_{i+1/2} = R^*_{i+1/2} \Lambda^*_{i+1/2} ( R^*_{i+1/2} )^{-1}$, where $R^*_{i+1/2} = \big[ \bm{r}_{i+1/2}^{*,1} \ldots \bm{r}_{i+1/2}^{{*,\ell}} \big]$, and $\Lambda^*_{i+1/2} = \diag \big( \lambda_{i+1/2}^{*,1}, \ldots, \lambda_{i+1/2}^{*,\ell} \big)$.
Furthermore, $\big| A^*_{i+1/2} \big| = R^*_{i+1/2} \big| \Lambda^*_{i+1/2} \big| (R^*_{i+1/2})^{-1}$.
As such, \eqref{eq:Roe-simple} can also be written as:
\begin{align} \label{eq:Roe-2}
\wh{\bm{f}}_{i+1/2}
( \bm{q} )
= 
\frac{1}{2}
\Big[
\big(
\bm{f}(\bm{q}_{i+1}) + \bm{f}(\bm{q}_{i})
\big)
-
\sum \limits_{j = 1}^{\ell} \alpha_{i+1/2}^{*,j} \big| \lambda_{i+1/2}^{*,j} \big| \bm{r}_{i+1/2}^{*,j}
\Big],
\end{align}
with coefficients $\{ \alpha_{i+1/2}^{*,j} \}_{j = 1}^{\ell}$ satisfying $\bm{q}_{i+1/2}^{+} - \bm{q}_{i+1/2}^{-} = \sum_{j = 1}^{\ell} \alpha_{i+1/2}^{*,j} \bm{r}_{i+1/2}^{*,j}$. 
Note the Roe flux adds wave-specific dissipation.
We also employ ``Harten's Entropy fix'' \cite[p. 326]{LeVeque_2004}, whereby the absolute value $|\lambda^{*,j}_{i+1/2}|$ is replaced with the smoothed absolute value in \cite[(15.53)]{LeVeque_2004} using smoothing parameter $\delta = 10^{-6}$. To maintain a simple notation, we still write the smoothed absolute eigenvalues as $|\lambda^{*,j}_{i+1/2}|$.
%

\section{Conservative nonlinear systems: Parallel-in-time solution}
\label{sec:cons-pint}

In this section, we consider the parallel-in-time solution of the fully discretized problem 
\begin{align}
\bm{q}^{n+1} \label{eq:cons-ts}
=
\Phi 
(\bm{q}^n),
\quad
n = 0, 1, \ldots, n_t-2,
\end{align}
corresponding to the FV discretization \eqref{eq:cons-Phi-def} of the PDE system \eqref{eq:cons}.
To this end, let us write \eqref{eq:cons-ts} as a global, nonlinear residual equation:
\begin{align} \label{eq:cons-all-at-once}
\bm{r}( \bm{q} )
:=
\bm{b}
-
{\cal N} (\bm{q})
\equiv
\begin{bmatrix}
\bm{q}^0 \\
\bm{0} \\
\vdots \\
\bm{0}
\end{bmatrix}
-
\begin{bmatrix}
I & \\
-\Phi( \, \cdot \, ) & I \\
& \ddots & \ddots \\
& & -\Phi( \, \cdot \, ) & I
\end{bmatrix}
\begin{bmatrix}
\bm{q}^0 \\
\bm{q}^1 \\
\vdots \\
\bm{q}^{n_t-1} 
\end{bmatrix}
=
\bm{0}.
\end{align}
Here $\bm{q} := ( \bm{q}^{0, \top}, \bm{q}^{1, \top}, \ldots, \bm{q}^{n_t-1,\top} )^\top \in \mathbb{R}^{{\ell} n_x n_t}$, ${\cal N} \colon \mathbb{R}^{{\ell} n_x n_t} \to \mathbb{R}^{{\ell} n_x n_t}$ is the nonlinear space-time discretization operator, and $\bm{b} \in \mathbb{R}^{{\ell} n_x n_t}$ contains the initial data.

Here we solve \eqref{eq:cons-all-at-once} using a residual correction scheme based on that developed in \cite{DeSterck-etal-2023-nonlin-scalar} for scalar nonlinear hyperbolic PDEs.
A summary of the procedure is given in \cref{alg:richardson}, wherein $\bm{q}_k$ is an iterative approximation of $\bm{q}$ with iteration index $k$.
This scheme combines an outer iteration based on a global linearization with an inner iteration for the associated linearized systems.
Note that this algorithm requires a CF-splitting defined by some integer $m > 1$, and that the F-relaxation in Line \ref{ln:nonlin-F-relax} is equivalent to the regular F-relaxation that MGRIT uses for nonlinear problems \cite{Howse_etal_2019}.
The primary step in the algorithm is the solution of a linearized problem ${\cal A}_k \bm{e}_k^{\rm lin} = \bm{r}_k$ with space-time matrix ${\cal A}_k$ given by
\begin{align} \label{eq:Pk-def}
{\cal A}_k
=
\begin{bmatrix}
& I \\
& - \Phi^{\rm lin}( \bm{q}_k^{0}) & I \\
& & \ddots & \ddots  \\
& & & - \Phi^{\rm lin}( \bm{q}_k^{n_t - 2}) & I
\end{bmatrix}
\in
\mathbb{R}^{{\ell} n_x n_t \times {\ell} n_x n_t}.
\end{align}
In \cref{alg:richardson} there is an option to invert ${\cal A}_k$ directly with sequential time-stepping or approximately with the characteristic-based block preconditioner from \cref{alg:char-prec}. 
Ultimately we are interested in the latter option since it allows for a parallel-in-time implementation, but, importantly, the direct inversion option allows for us to understand best-case convergence of the outer preconditioned nonlinear residual correction iteration.

\begin{algorithm}[t!]
\setcounter{AlgoLine}{0}
\KwIn{Initial iterate $\bm{q}_0 \approx \bm{q}$; number of iterations $\texttt{maxit}$; number of inner iterations $\texttt{inner-it}$}
$k \gets 0$\tcp*{Iteration counter}
\KwOut{Approximate solution $\bm{q}_k \approx \bm{q}$}
\While{$k < \mathtt{maxit}$}{
  	${\bm{q}}_{k} \gets \textrm{relax on }{\cal N}(\bm{q}_k) \approx \bm{b}$\tcp*{Nonlinear F-relaxation}\label{ln:nonlin-F-relax}
	${\bm{r}}_{k} \gets \bm{b} - {\cal N} ({\bm{q}}_{k})$\tcp*{Compute residual}
	\uIf{exact linear solve}{ 
	\tcc{Directly compute linearized error via time-stepping; sequential-in-time}
      ${\bm{e}}_k^{\rm lin} \gets {\cal A}_k^{-1} {\bm{r}}_k$\label{ln:nonlin-exact-solve}
          }	 
          \Else{
          \tcc{Approximate error using \texttt{inner-it} iterations of char.-based block preconditioning in \cref{alg:char-prec}; may be solved parallel-in-time}
      ${\bm{e}}_k^{\rm lin} \gets \textrm{char-based-block-prec} ({\cal A}_k, {\bm{e}}_k^{\rm lin}, {\bm{r}}_k,$\texttt{inner-it})\tcp*{Need to initialize ${\bm{e}}_k^{\rm lin}$}\label{ln:nonlin-char-prec}
    }
	$\bm{q}_{k+1} \gets {\bm{q}}_k + {\bm{e}}_k^{\rm lin}$\tcp*{Compute new iterate}
	$k \gets k + 1$\tcp*{Increment iteration counter}
	} 
	  \caption{Preconditioned residual correction scheme for \eqref{eq:cons-all-at-once}.
  \label{alg:richardson}}
\end{algorithm}

Next in \cref{sec:cons-outer-iter}, we develop an expression for $\Phi^{\rm lin}( \bm{q}_k^{n})$ in \eqref{eq:Pk-def}, and then in \cref{sec:cons-inner-iter,sec:cons-P} we discuss characteristic-variable block preconditioning for the linearized problems. We conclude with numerical results in \cref{sec:cons-num-res}.

\subsection{The linearized time-stepping operator}
\label{sec:cons-outer-iter}

Considering the form of $\Phi$ in \eqref{eq:cons-Phi-def}, its Jacobian exists everywhere because the dissipation matrix (the only potentially problematic term) $|A^*_{i+1/2}|$ is based on a smoothed absolute value function.
In \cite{DeSterck-etal-2023-nonlin-scalar}, in the context of scalar PDEs, non-differentiability of $\Phi$ arose due to non-smoothness of the dissipation coefficient in a local Lax--Friedrichs numerical flux, which uses a non-smoothed absolute value.
Yet, we were able to develop effective $\Phi^{\rm lin}$ in \cite{DeSterck-etal-2023-nonlin-scalar} by freezing the dissipation coefficient in the numerical flux while differentiating $\Phi$.
This also led to tractable methods with manageable computational costs. 
To develop a tractable expression for $\Phi^{\rm lin}$ in the present setting, we apply this same idea leading to the following approximate Jacobian:
\begin{align} \label{eq:Phi-lin-def}
\Phi^{\rm lin}( \bm{q}_k^n )
=
\Big[ \nabla_{ \bm{q}_k^n } \Phi( \bm{q}_k^n ) \Big]_{| A^*(\bm{q}_k^n)|},
\end{align}
where this notation denotes holding $|A^*(\bm{q}_k^n)|$ constant when differentiating $\Phi$ in \eqref{eq:cons-Phi-def}.

To simplify notation, let us drop the iteration index $k$ henceforth.
Consider that in \cref{alg:richardson} we need to solve ${\cal A} \bm{e} = \bm{r}$. Given the sparsity structure of ${\cal A}$ in \eqref{eq:Pk-def} it becomes apparent that we need to know how to compute the action of $\Phi^{\rm lin}( \bm{q}^n)$ on some (error) vector $\bm{e}^n \in \mathbb{R}^{\ell n_x}$.
In particular, it suffices for us to understand how to compute this action in cell $i$: 
\begin{align}
\big( \Phi^{\rm lin}( \bm{q}^n) \bm{e}^n \big)_i
=
\Big(
\Big[ \nabla_{ \bm{q}^n } \Phi( \bm{q}^n ) \Big]_{| A^* (\bm{q}^n)|} \bm{e}^n 
\Big)_i
=
\Big[ \nabla_{ \bm{q}^n } \big( \Phi( \bm{q}^n ) \big)_i \Big]_{| A^* (\bm{q}^n)|} \bm{e}^n 
\in
\mathbb{R}^{\ell},
\end{align}
with $\big( \Phi( \bm{q}^n ) \big)_i$ defined in \eqref{eq:cons-Phi-def}, and $\Big[ \nabla_{ \bm{q}^n } \big( \Phi( \bm{q}^n ) \big)_i \Big]_{| A^* (\bm{q}^n)|} \in \mathbb{R}^{\ell \times \ell n_x}$ being its approximate Jacobian. 
Differentiating \eqref{eq:cons-Phi-def} in the direction of $\bm{e}^n$ we find
\begin{align} \label{eq:linearized-Phi}
\big( \Phi^{\rm lin}( \bm{q}^n ) \bm{e}^n \big)_i 
=
\Big[ \nabla_{ \bm{q}^n } \big( \Phi( \bm{q}^n ) \big)_i \Big]_{| A^* (\bm{q}^n)|} 
\bm{e}^n
=
\bm{e}^n_i
-
\frac{\delta t}{h}
\Big[
\wh{\bm{f}}_{i+1/2}^{\textrm{lin},n} - \wh{\bm{f}}_{i-1/2}^{\textrm{lin},n}
\Big],
\end{align}
where, by the chain rule, we have $
\wh{\bm{f}}_{i+1/2}^{\textrm{lin},n} 
:=
\Big[ \nabla_{ \bm{q}^n } \wh{\bm{f}}_{i+1/2}( \bm{q}^n ) \Big]_{| A^* (\bm{q}^n)|} \bm{e}^n
$.
Using \eqref{eq:Roe-simple} to compute this directional derivative gives
\begin{align} \label{eq:linearized-flux}
\wh{\bm{f}}_{i+1/2}^{\textrm{lin},n} 
=
\frac{1}{2}
\Big[
\Big(
A(\bm{q}_{i+1}^n) \bm{e}^n_{i+1} + A(\bm{q}_{i}^n) \bm{e}^n_{i}
\Big)
-
\big| A^*_{i+1/2}( \bm{q}^n ) \big|
\big(
\bm{e}_{i+1}^n - \bm{e}_{i}^n
\big)
\Big] 
\in 
\mathbb{R}^{\ell},
\end{align}
recalling the shorthand $A(\bm{q}) := \bm{f}'(\bm{q})$ for the flux Jacobian.

Observe that the \textit{linearized} time-stepping operator \eqref{eq:linearized-Phi}, paired with the \textit{linearized} flux \eqref{eq:linearized-flux}, has a structure that is almost identical to the \textit{nonlinear} time-stepping operator \eqref{eq:cons-Phi-def}, paired with the \textit{nonlinear} flux \eqref{eq:Roe-simple}.
That is, \eqref{eq:linearized-Phi} is the time-stepping operator of a first-order FV discretization of the linear system of conservation laws
\begin{align} \label{eq:cons-linearized}
\frac{\partial \bm{e}_k}{\partial t} + \frac{\partial }{\partial x} \big( A(\bm{q}_k) \bm{e}_k \big) = \bm{0},
\quad
(x, t) \in \Omega \times (0, T],
\end{align}
with $\bm{q}_k$ the current approximation to the solution $\bm{q}$ of the nonlinear space-time system \eqref{eq:cons-all-at-once} (the nonlinear iteration index $k$ is included here for clarity).
Specifically, if we define the linearized flux function $\bm{f}^{\textrm{lin}} (\bm{e}) := A(\bm{q}_k) \bm{e}$, then the linearized discretization \eqref{eq:linearized-Phi} arises from applying a first-order FV discretization to \eqref{eq:cons-linearized} with a Roe-like flux based on $\bm{f}^{\textrm{lin}} (\bm{e})$.
This scenario is a direct systems generalization of the situation in \cite{DeSterck-etal-2023-nonlin-scalar} where the linearized problem is a scalar conservation law based on a linearization of the flux in the underlying nonlinear PDE.
This realization allows us to compute truncation error corrections for semi-Lagrangian coarse-grid schemes that are crucial for efficient MGRIT preconditioning when inexactly applying Line \ref{ln:nonlin-char-prec} of \cref{alg:richardson}.
%

\subsection{The inner linearized iteration}
\label{sec:cons-inner-iter}

We now discuss solving the linearized systems in \cref{alg:richardson} using characteristic-variable block preconditioning.
Throughout this section we drop the nonlinear iteration index $k$ for readability, but, where convenient to do so, we add ``0'' subscripts to quantities associated with the flux Jacobian to remind the reader that it is based on $\bm{q}_k \approx \bm{q}$; that is, $A_0 := A( \bm{q}_k )$.

Our approach for solving the linearized system ${\cal A}_0 \bm{e} = \bm{r}$ is based on that developed for the acoustics system in \cref{sec:acoustics-pint}. However, there are a few key differences:
(i) the PDE system \eqref{eq:cons-linearized} is ${\ell}$-dimensional, while \eqref{eq:acoustic} is two-dimensional;
(ii) the PDE system \eqref{eq:cons-linearized} is in conservative form, while \eqref{eq:acoustic} is in non-conservative form;
(iii) the coefficient matrix $A_0 := A( \bm{q}_k )$ in \eqref{eq:cons-linearized} varies in both space and time, while that in \eqref{eq:acoustic} varies in space only.
Despite these key differences, \cref{alg:char-prec} can be used without modification (modulo changes in notation) to approximately solve ${\cal A}_0 \bm{e} = \bm{r}$.
The remainder of this section discusses transforming to characteristic variables, and block preconditioners.

To gain insight into the characteristic variables at the discrete level, we first consider them at the PDE level.
To re-express the linearized PDE \eqref{eq:cons-linearized} in characteristic variables $\wh{\bm{e}} := R_0^{-1} \bm{e}$---recall that characteristic-space quantities are denoted with a hat---, multiply the PDE from the left by $R_0^{-1}$ to get
\begin{align} \label{eq:linearized-char-PDE}
\bm{0} 
=
R_0^{-1}
\left[
\frac{\partial }{\partial t} 
\big( R_0 \wh{\bm{e}} \big)
+
\frac{\partial }{\partial x} \big( R_0 \Lambda_0 \wh{\bm{e}} \big)
\right]
\approx
\frac{\partial \wh{\bm{e}}}{\partial t} 
+
\frac{\partial }{\partial x} \big( \Lambda_0 \wh{\bm{e}} \big)
\quad
\textrm{when $R_0$ is slowly varying},
\end{align}
with $A_0 = R_0 \Lambda_0 R_0^{-1}$.
A pertinent question is to what extent the eigenvectors $R_0$ of the flux Jacobian may be slowly varying.
In general, we cannot assume they vary slowly; however, in practice, this may be a reasonable approximation for many problems.
That is, even though \eqref{eq:cons} admits solutions with shocks---across which $R_0$ may change discontinuously---, in practice they are typically limited in number, contained in isolated regions of space-time, and away from these shocked regions, the solution may be smoothly and slowly varying. 
For example, in the context of motivating linearized Riemann solvers, LeVeque \cite[p. 316]{LeVeque_2004} writes ``\textit{... The solution to a conservation law typically consists of at most a few isolated shock waves or contact discontinuities separated by regions where the solution is smooth. In these regions, the variation in $\bm{q}$ from one grid cell to the next has $\Vert \bm{q}_{i} - \bm{q}_{i-1} \Vert = {\cal O}(h)$ and the Jacobian matrix is nearly constant, $\bm{f}'(\bm{q}_{i - 1}) \approx \bm{f}'(\bm{q}_{i})$.}''
%

\subsection{Constructing $\wh{{\cal P}}$ and $\wt{{\cal P}}$}
\label{sec:cons-P}

We now consider characteristic variables at the discrete level. Recall from \cref{sec:cons-outer-iter} that the global linearized system is 
\begin{align} \label{eq:linearized-all-at-once}
{\cal A}_0 
\bm{e}
=
\bm{r},
\quad
\textrm{or}
\quad
\bm{e}^{n+1} = \Phi_0^{n} \bm{e}^n + \bm{r}^{n+1},
\quad
n = 0, \ldots, n_t - 2,
\end{align}
where we now use the shorthand $\Phi^{n}_0 := \Phi^{\textrm{lin}}( \bm{q}^n )$ for readability.
To transform this discretized system to characteristic variables we need to introduce the discretized left and right eigenvector matrices $({\cal R}_0^n )^{-1}$ and ${\cal R}_0^n \in \mathbb{R}^{\ell n_x \times \ell n_x}$, respectively. These are $\ell \times \ell$ block matrices where the $ij$th block is a diagonal matrix corresponding to the values of the $ij$th element of $(R_0^n)^{-1}$ and $R_0^n$, respectively, over the grid points, i.e., these matrices generalize those in \eqref{eq:acoustic-calR} that discretize the eigenvectors of the flux Jacobian in \eqref{eq:acoustic}.
Multiplying the $n$th equation in \eqref{eq:linearized-all-at-once} on the left by $({\cal R}_0^{n+1})^{-1}$ we get the characteristic-space residual equation
\begin{align}
\begin{split}
\wh{\bm{e}}^{n+1} &= \wh{\Phi}_0^{n} \wh{\bm{e}}^n + \wh{\bm{r}}^{n+1},
\quad
n = 0, \ldots, n_t - 2,
\quad
\textrm{where},
\quad
\quad
\wh{\bm{e}}^n 
:=
({\cal R}_0^{n})^{-1}
{\bm{e}}^n,
\\
\wh{\Phi}_0^{n}
&:=
({\cal R}_0^{n+1})^{-1}
{\Phi}_0^{n}
{\cal R}_0^{n}
=
\begin{bmatrix}
\wh{\Phi}_{0,11}^{n} & \ldots & \wh{\Phi}_{0,1 \ell}^{n} \\
\vdots & \ddots & \vdots \\
\wh{\Phi}_{0,\ell 1}^{n} & \ldots & \wh{\Phi}_{0,\ell \ell}^{n}
\end{bmatrix},
\quad
\wh{\bm{r}}^n 
:=
({\cal R}_0^{n})^{-1}
{\bm{r}}^n.
\end{split}
\end{align}
Re-ordering this system to be blocked by characteristic variable gives: 
\begin{align}
\wh{{\cal A}}_0 
\wh{\bm{e}}
=
\wh{\bm{r}},
\quad
\textrm{or}
\quad
\begin{bmatrix}
\wh{{\cal A}}_{0,11} & \ldots & \wh{{\cal A}}_{0,1 \ell} \\
\vdots & \ddots & \vdots \\
\wh{{\cal A}}_{0,\ell 1} & \ldots & \wh{{\cal A}}_{0,\ell \ell}
\end{bmatrix}
\begin{bmatrix}
\wh{\bm{e}}^{1} \\
\vdots \\
\wh{\bm{e}}^{\ell}
\end{bmatrix}
=
\begin{bmatrix}
\wh{\bm{r}}^{1} \\
\vdots \\
\wh{\bm{r}}^{\ell}
\end{bmatrix},
\end{align}
where the blocks $\wh{{\cal A}}_{0,ii}, \wh{{\cal A}}_{0,ij} \in \mathbb{R}^{n_x n_t \times n_x n_t}$, $j \neq i$, are given by
\begin{align} 
\wh{{\cal A}}_{0,ii}
=
\begin{bmatrix}
I \\
-\wh{\Phi}^0_{0,ii} & I \\
& \ddots & \ddots \\
& & -\wh{\Phi}^{n_t-1}_{0,ii} & I
\end{bmatrix},
\quad
\wh{{\cal A}}_{0,ij}
=
\begin{bmatrix}
0 \\
-\wh{\Phi}^0_{0,ij} & 0 \\
& \ddots & \ddots \\
& & -\wh{\Phi}^{n_t-1}_{0,ij} & 0
\end{bmatrix}.
\end{align}
We now propose an $\ell \times \ell$ block preconditioner $\wh{{\cal P}}$ by truncating $\wh{\cal A}_0$ to be block diagonal: \footnote{An alternative way to think about this preconditioner is that it replaces the $\ell \times \ell$ block time-stepping operator $\wh{\Phi}_0^{n}$ in \eqref{eq:linearized-all-at-once} with its block diagonal.}
\begin{align}
\wh{{\cal P}}_D^{-1}
=
\begin{bmatrix}
\wh{{\cal A}}_{0,11} & \\
& \ddots  \\
&  & \wh{{\cal A}}_{0,\ell \ell}
\end{bmatrix}^{-1}.
\end{align}
Another preconditioning possibility concerns block triangular approximations to $\wh{{\cal A}}_{0}$; however, we omit discussion of these, since results (not shown here) indicate little or no difference in convergence behavior compared to $\wh{{\cal P}}_D^{-1}$.

Analogous to the acoustics preconditioners in \cref{sec:acoustics-prec}, we consider also an alternative preconditioner $\wt{{\cal P}}_D$ as in \eqref{eq:prec-approx-def} based on replacing $\wh{A}_{0,ii}$ in $\wh{{\cal P}}_D$ with approximate diagonal blocks $\wt{A}_{0,ii}$.
A straightforward way of developing a potentially suitable approximate time-stepping operator $\wt{\Phi}_{ii}^n$ is to discretize, with a Roe-like flux, the $i$th characteristic variable in the decoupled characteristic system from the right-hand side of \eqref{eq:linearized-char-PDE}.
Doing so yields a $\wt{\Phi}^{n}_{ii}$ with action given by
\begin{align}
\big( \wt{\Phi}^{n}_{ii} \wh{\bm{e}}^{n} \big)_i
=
\wh{e}^{n}_i
-
\frac{\delta t}{h}
\big(
\wh{g}_{i+1/2}^n
-
\wh{g}_{i-1/2}^n
\big),
\end{align}
with flux 
$
\wh{g}_{i+1/2}^n
=
\frac{1}{2}
\big[
\big(
\lambda_{i+1}^{n} \wh{e}^{n}_{i+1}
+
\lambda_{i}^{n} \wh{e}^{n}_{i}
\big)
-
\big| \lambda_{i}^{*,n} \big| 
\big(
\wh{e}^{n}_{i+1}
-
\wh{e}^{n}_{i}
\big)
\big]
$.

\subsection{Numerical experiments}
\label{sec:cons-num-res}

We now present numerical results for our solver of the all-at-once system \eqref{eq:cons-all-at-once}.
First we discuss parameter settings.
\Cref{alg:richardson} is iterated until the $\ell^2$-norm of the space-time residual $\bm{r}_k$ is reduced by at least 10 orders of magnitude from its initial value or for a maximum of 15 iterations.
Each PDE problem is solved on a sequence of increasingly finer space-time meshes so as to understand the asymptotic performance of the solver; the initial nonlinear iterate on a given space-time mesh is computed by interpolating the solution of the same PDE problem on the preceding coarser mesh.
The nonlinear F-relaxation in \cref{alg:richardson} is employed with a CF-splitting factor of $m = 8$.
Where the characteristic-based solver \cref{alg:char-prec} is applied to linearized problems, i.e., as in Line \ref{ln:nonlin-char-prec}, it uses F-relaxation with a CF-splitting factor of $m = 8$, and the linearized error is initialized as the right-hand side vector.
When applying preconditioners $\wt{{\cal P}}$ based on approximate diagonal blocks $\wt{{\cal A}}_{ii}$, we consider inverting $\wt{{\cal A}}_{ii}$ directly with time-stepping and inexactly using a single MGRIT V-cycle. Here MGRIT uses a modified conservative semi-Lagrangian discretization as in \cite{DeSterck-etal-2023-nonlin-scalar}; otherwise, all MGRIT parameters are chosen as they were in \cref{sec:acoustics-num-res} for the acoustics problems.

For both \eqref{eq:SWE} and \eqref{eq:euler} we consider initial conditions below with amplitude dependent on a constant $\varepsilon$, and thus the strength of the nonlinearity of the resulting PDE systems is parametrized by $\varepsilon$ \cite[Chapter 13.5]{LeVeque_2004}.
Our numerical results show that the characteristic-based preconditioner struggles more for larger-amplitude problems than smaller-amplitude problems, which is to be expected from the discussion in \cref{sec:cons-inner-iter} since the eigenvectors of the flux Jacobian for larger amplitude problems can change more rapidly at discontinuities.
The time-step size $\delta t$ is constant across the time domain, and is chosen such that the constant factor $c_0 := \max_{s \in \{ 1, \ldots, \ell \}  } \max_{x \in \Omega} |\lambda_s( \bm{q}(x, 0) )| \tfrac{\delta t}{h}$ (to be specified) is smaller than unity.

The test problems we consider are as follows.
First, consider \eqref{eq:SWE} with an initial depth perturbation:
\begin{align}
\tag{IDP}
\label{eq:SWE-idp}
h(x, 0) &= 1 + \varepsilon \exp( -5 (x - 5/2)^2 ),
\quad u(x, 0) = 0,
\end{align}
with $(x, t) \in (-5, 5 ) \times (0, 10)$, $c_0 = 0.8$, and subject to periodic boundary conditions in space.
Small- and larger-amplitude results are shown in \cref{fig:SWE-IDP-weakly,fig:SWE-IDP-fully}, respectively.
Second, consider \eqref{eq:euler} with an initial density and pressure perturbation:
\begin{align}
\tag{IDPP}
\label{eq:euler-idp}
\rho(x, 0) = 1 + \varepsilon \exp( -5 (x - 5/2)^2 ), 
\quad
u(x, 0) = 0, 
\quad
p(x, 0) = \rho(x, 0),
\end{align}
with $(x, t) \in (-5, 5) \times (0, 10)$, $c_0 = 0.7$, and with periodic boundary conditions. 
Small- and larger-amplitude results are shown in \cref{fig:euler-IDPP-weakly,fig:euler-IDPP-fully}, respectively.
The initial pulses in \eqref{eq:SWE-idp} and \eqref{eq:euler-idp} split into left- and right-travelling pulses which, due to the periodic boundaries, collide twice over the space-time domain. 
The leading edge of each travelling pulse develops into a shock, and the trailing edge into a rarefaction. 
Additional qualitatively similar numerical results can be found in Supplementary Materials Section \ref{SMsec:num-res-nonlin} wherein standard Riemann problems are considered for both \eqref{eq:SWE} and \eqref{eq:euler} in the form of dam break and Sod problems, respectively.

\begin{figure}[t!]
\centerline{
\includegraphics[width=0.345\textwidth]{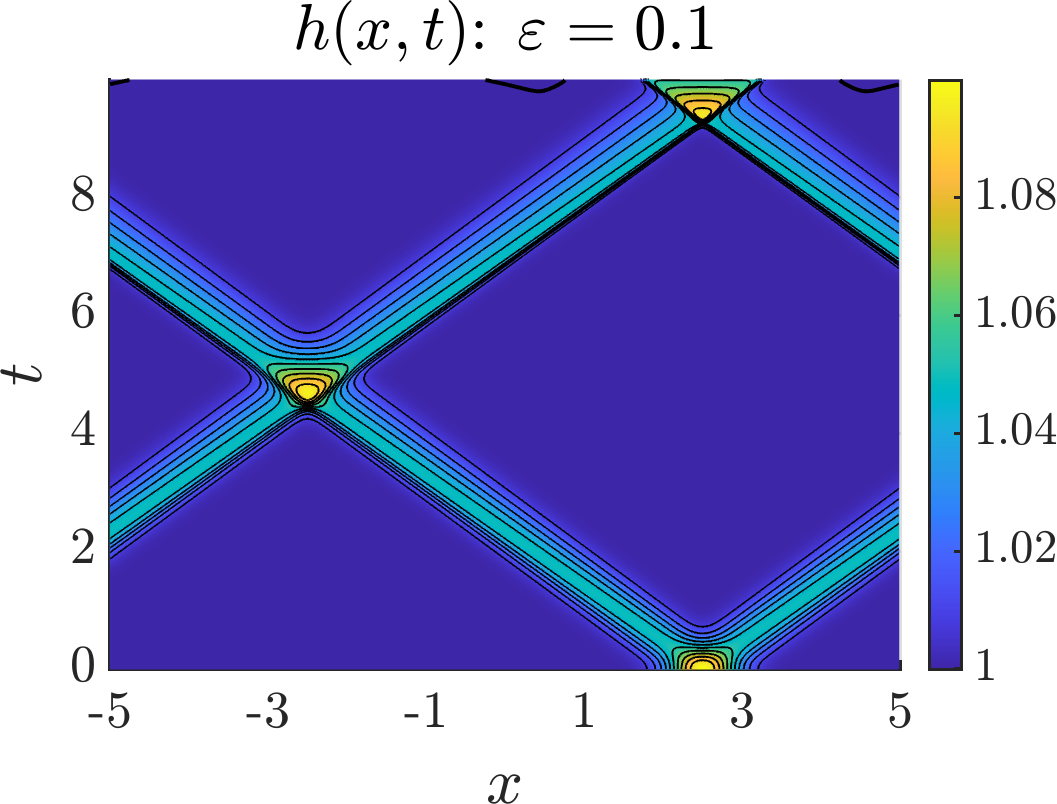}
\hspace{0ex}
\includegraphics[width=0.325\textwidth]{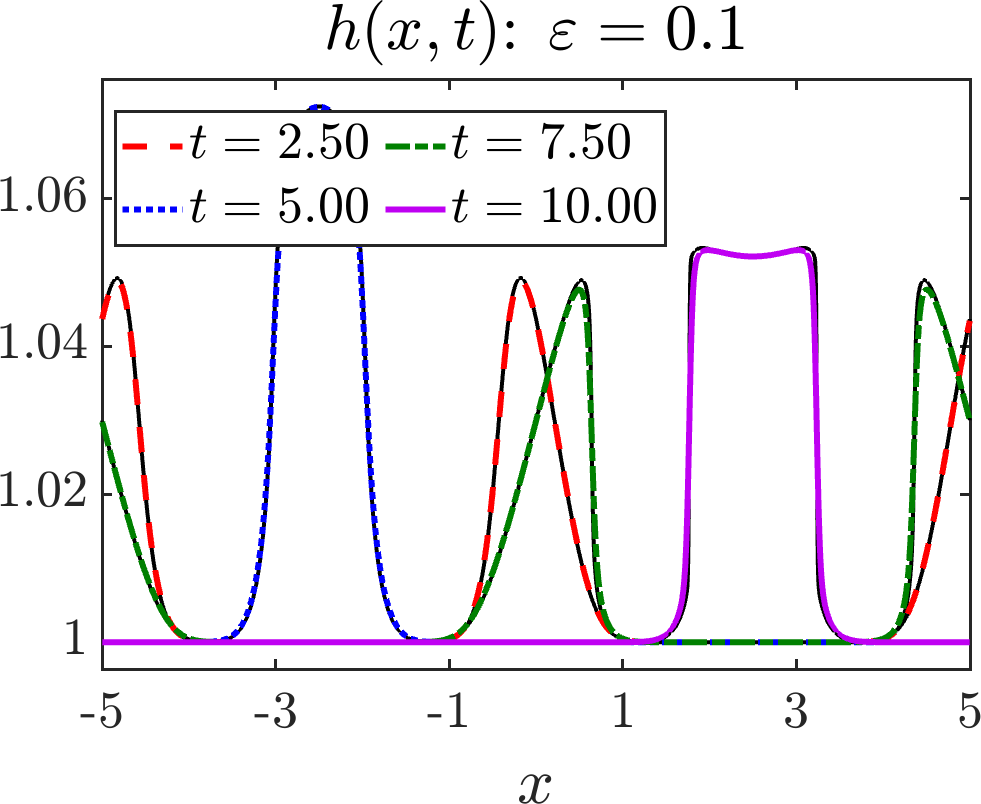}
\hspace{0ex}
\includegraphics[width=0.325\textwidth]{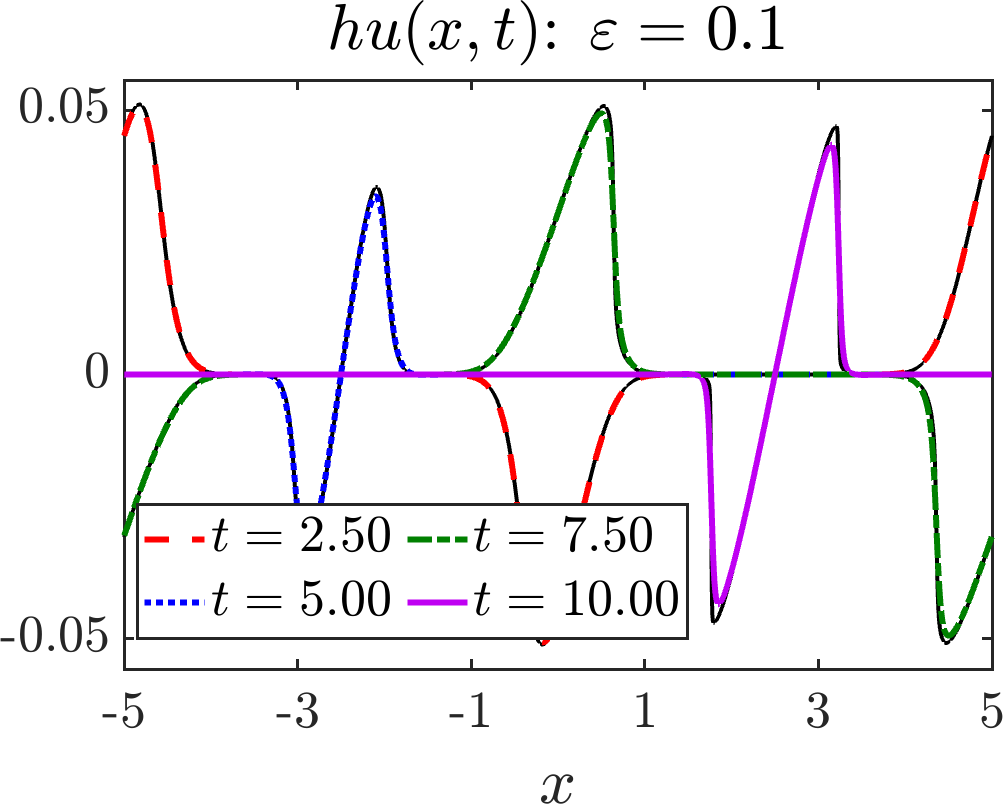}
}
\vspace{1ex}
\centerline{
\includegraphics[width=0.335\textwidth]{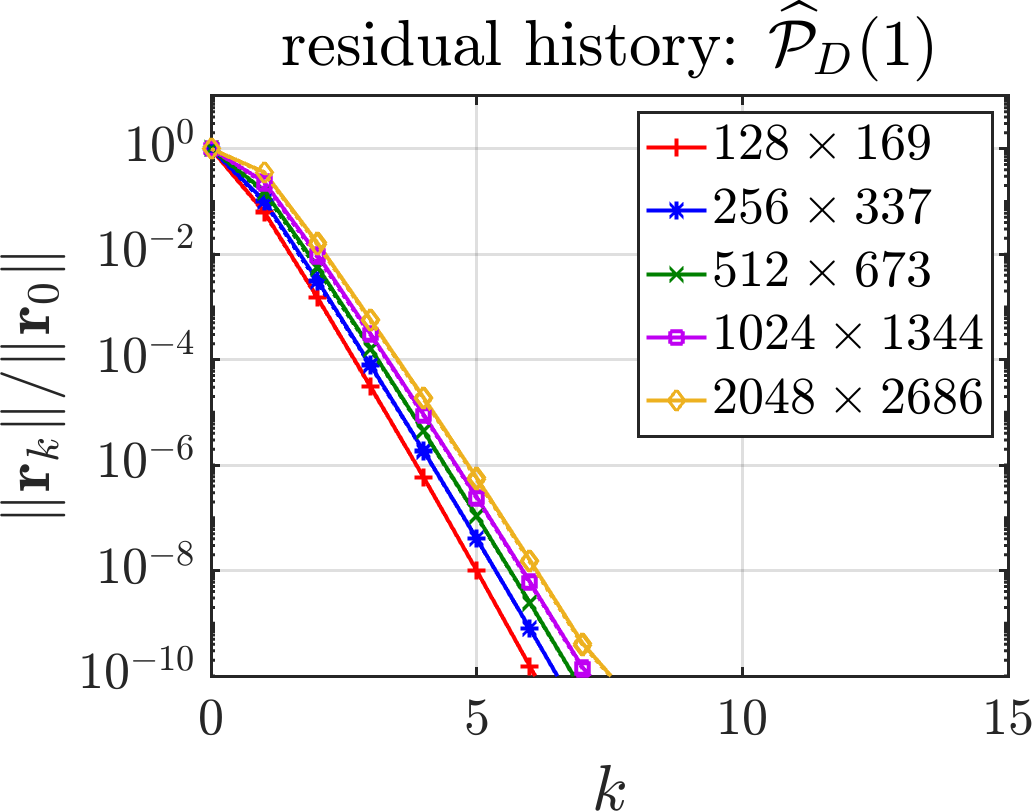}
\hspace{1ex}
\includegraphics[width=0.335\textwidth]{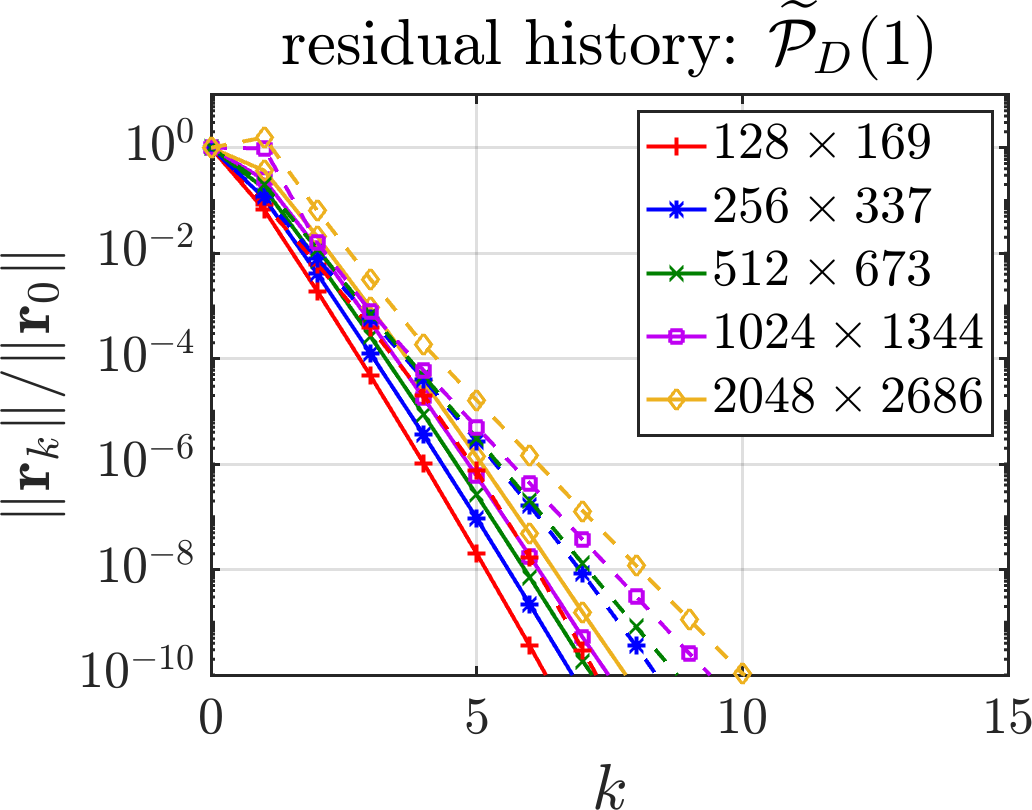}
}
\caption{
Small-amplitude \eqref{eq:SWE-idp} problem for \eqref{eq:SWE}, with $\varepsilon = 0.1$.
\textbf{Bottom left:} Nonlinear residual history with dotted lines corresponding to exact solves of the linearized problems ${\cal A}_k \bm{e}_k^{\rm lin} = \bm{r}_k$, and solid lines to approximate solves with a single iteration of \cref{alg:char-prec} using $\wh{{\cal P}}$.
\textbf{Bottom right:} Nonlinear residual history where linearized problems ${\cal A}_k \bm{e}_k^{\rm lin} = \bm{r}_k$ are approximately solved with a single iteration of \cref{alg:char-prec} using $\wt{{\cal P}}$ with diagonal blocks $\wt{{\cal A}}_{ii}$ either inverted exactly (solid lines) or approximately with one MGRIT V-cycle (dashed lines).
Legends in the bottom row correspond to space-time mesh resolutions of $n_x \times n_t$.
\label{fig:SWE-IDP-weakly}
}
\end{figure}

The results of the above problems are shown in \cref{fig:SWE-IDP-weakly,fig:SWE-IDP-fully,fig:euler-IDPP-weakly,fig:euler-IDPP-fully} and the plots in each figure can be interpreted in the following way.
Top row: Space-time contour of the first component of $\bm{q}$, and cross-sections at the times indicated for the first two components of $\bm{q}$ corresponding to the highest spatial resolution used in our tests of $n_x = 2048$; underlying the cross-sections are thin black curves showing reference solutions obtained at spatial resolutions of $n_x = 8192$.
Bottom row: Nonlinear residual history using \cref{alg:richardson} when linearized systems are solved directly with time-stepping, as in Line \ref{ln:nonlin-exact-solve} of \cref{alg:richardson}, or approximately using \texttt{inner-it} applications of the characteristic-based preconditioned iteration \cref{alg:char-prec}, as in Line \ref{ln:nonlin-char-prec} of \cref{alg:richardson}. 
Ideally \texttt{inner-it} $=1$, but results show more than one inner iteration is sometimes beneficial and even necessary for the outer iteration to converge.
Recall that $\wh{{\cal P}}$ uses diagonal blocks from $\wh{\Phi}_0^n$ and $\wt{\cal P}$ uses a certain approximation to them.\footnote{Implementation note: The proposed preconditioner $\wh{{\cal P}}_D$ requires the action of the diagonal blocks in $\wh{\Phi}_0^n$. For the acoustics problem we computed the stencils of the analogous blocks analytically (see \cref{lem:god-char-stencils}). For the current nonlinear problems, these calculations are significantly more involved, so an alternative is needed.
Recall that $\wh{\Phi}_0^{n}
:=
({\cal R}_0^{n+1})^{-1}
{\Phi}_0^{n}
{\cal R}_0^{n}$; thus, since we know how to compute the action of $({\cal R}_0^{n+1})^{-1}$, ${\Phi}_0^{n}$, and ${\cal R}_0^{n}$ we can use these to compute the action of $\wh{\Phi}_0^{n}$.
While the resulting computations are inefficient, since computing the action of the block diagonal component of $\wh{\Phi}_0^{n}$ requires the action of $({\cal R}_0^{n+1})^{-1}$, ${\Phi}_0^{n}$, and ${\cal R}_0^{n}$ $\ell$ times each, this allows us to examine the convergence behavior of the proposed preconditioner.} 
Plots in the bottom row of each figure include either $\wh{{\cal P}}$(\texttt{inner-it}) or $\wt{{\cal P}}$(\texttt{inner-it}) in their titles. Plots titled $\wh{{\cal P}}$(\texttt{inner-it}) show results with linearized systems in \cref{alg:richardson} either solved approximately with block preconditioner $\wh{{\cal P}}$(\texttt{inner-it}) (solid lines), or solved exactly via time-stepping (dotted lines); note that all results using $\wh{\cal P}$ apply the preconditioner exactly, i.e., using exact inverses for diagonal blocks $\wh{{\cal A}}_{ii}$.
Plots titled $\wt{\cal P}$(\texttt{inner-it}) show results for block preconditioning with $\wt{\cal P}$, with solid lines corresponding to exact inverses of diagonal blocks $\wt{{\cal A}}_{ii}$ and dashed lines to inexact inverses via one MGRIT V-cycle.

\begin{figure}[t!]
\centerline{
\includegraphics[width=0.345\textwidth]{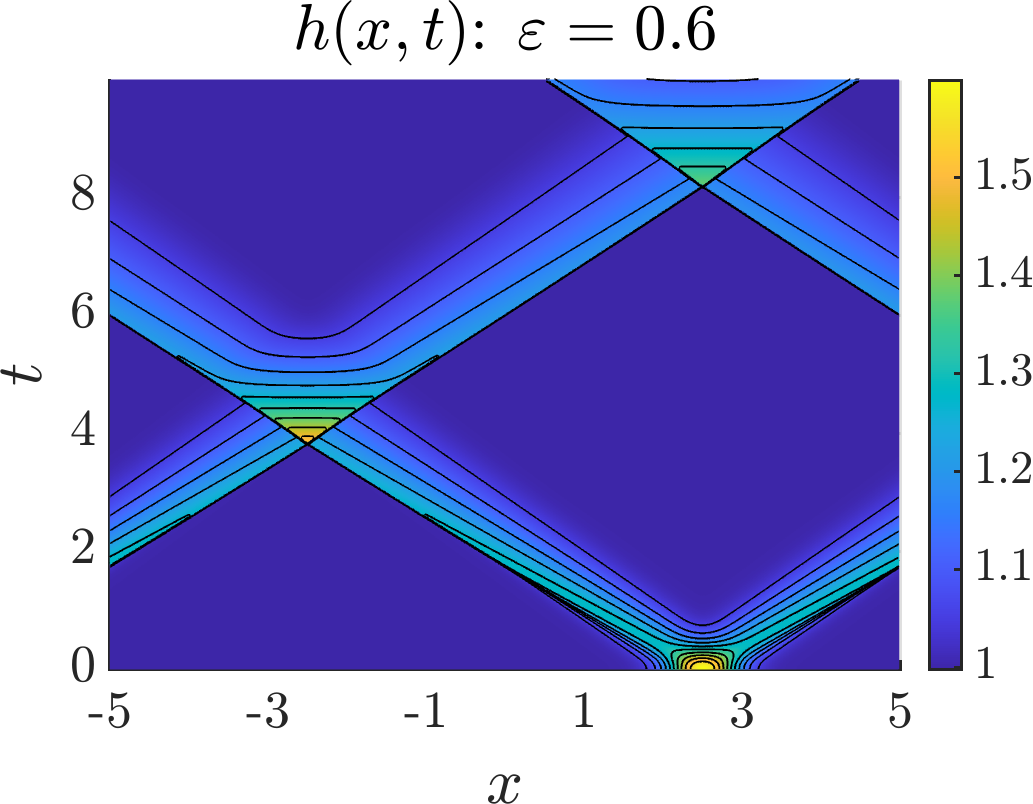}
\hspace{0ex}
\includegraphics[width=0.325\textwidth]{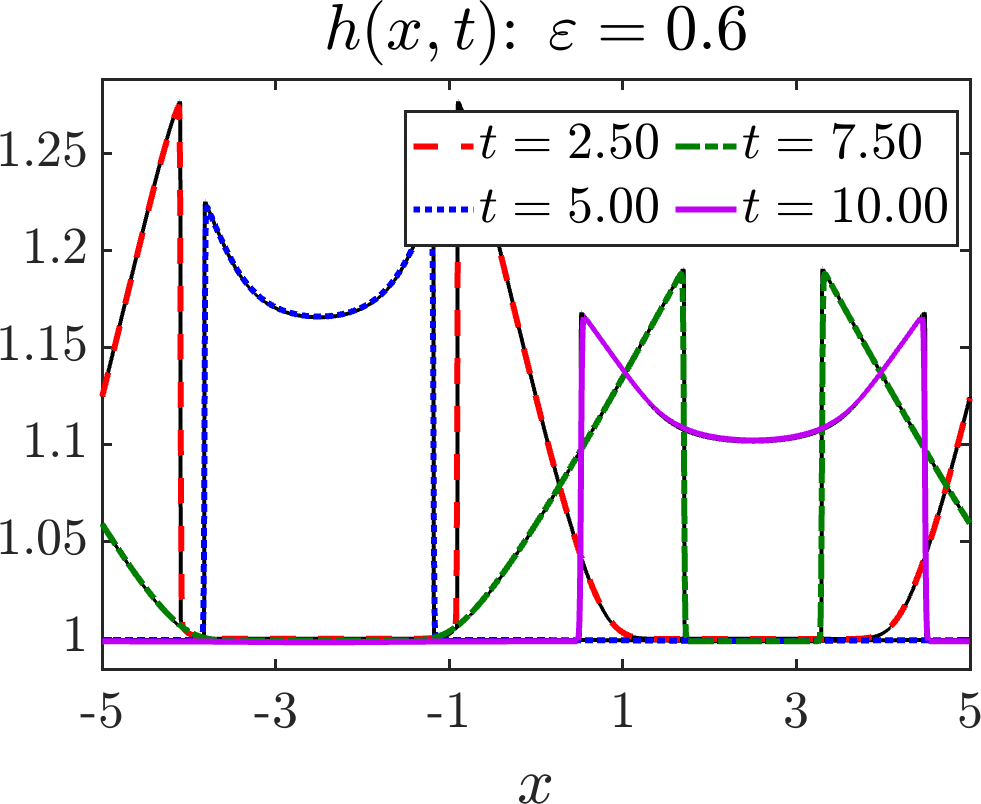}
\hspace{0ex}
\includegraphics[width=0.325\textwidth]{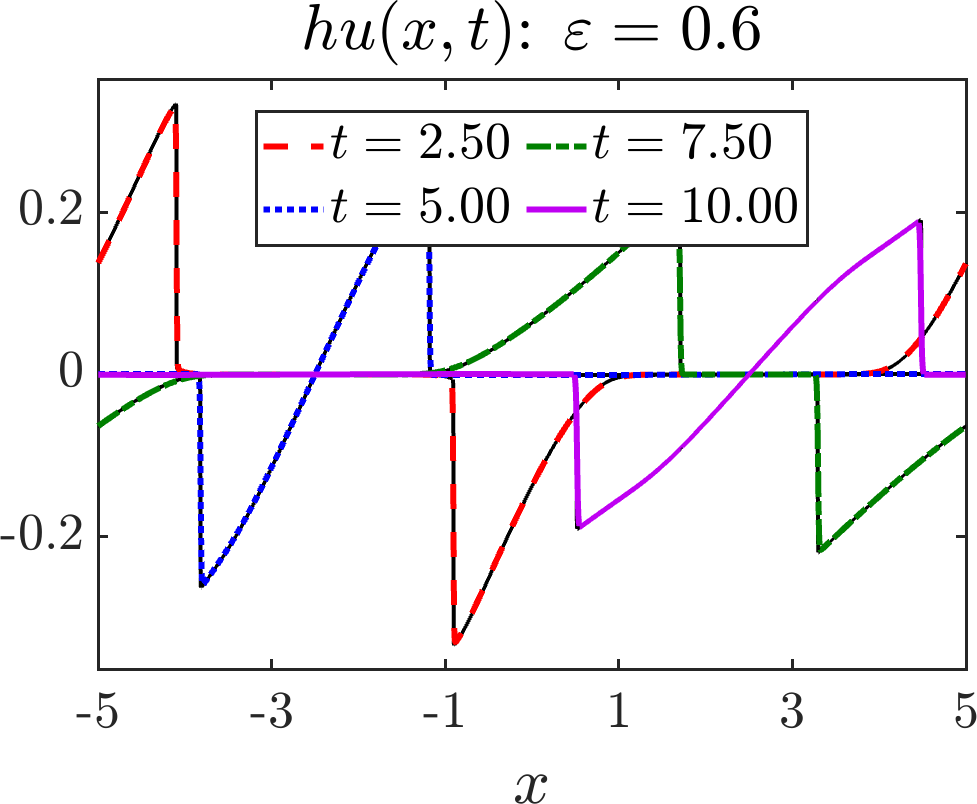}
}
\vspace{1ex}
\centerline{
\includegraphics[width=0.335\textwidth]{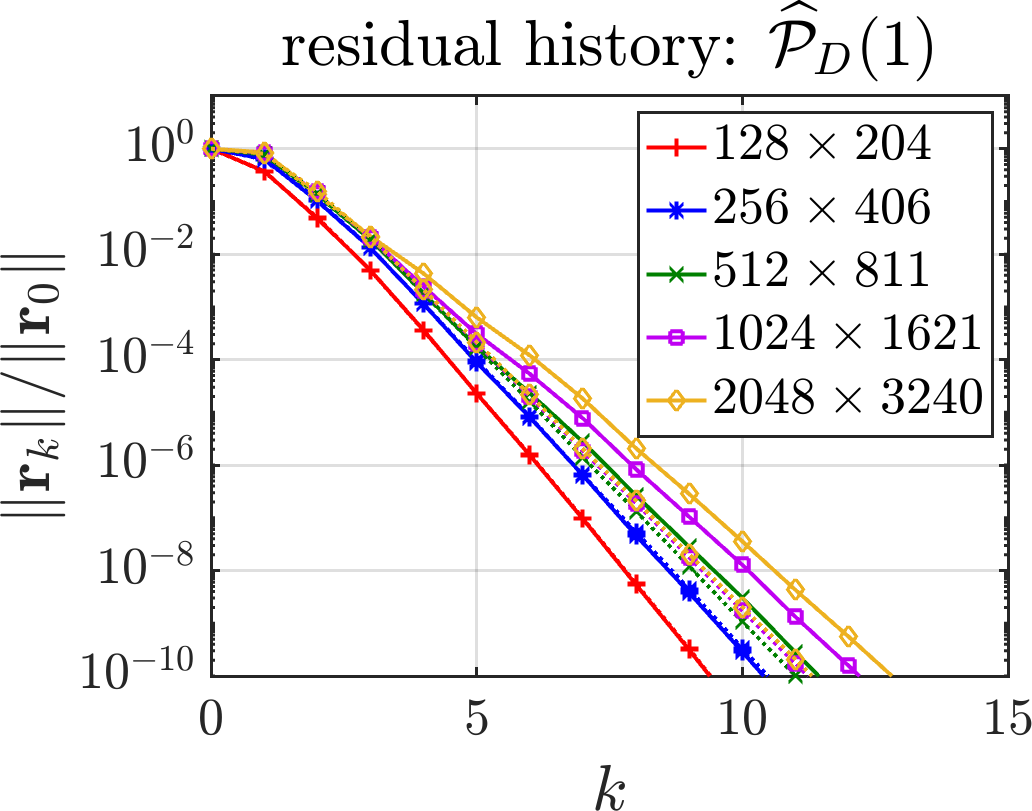}
\includegraphics[width=0.335\textwidth]{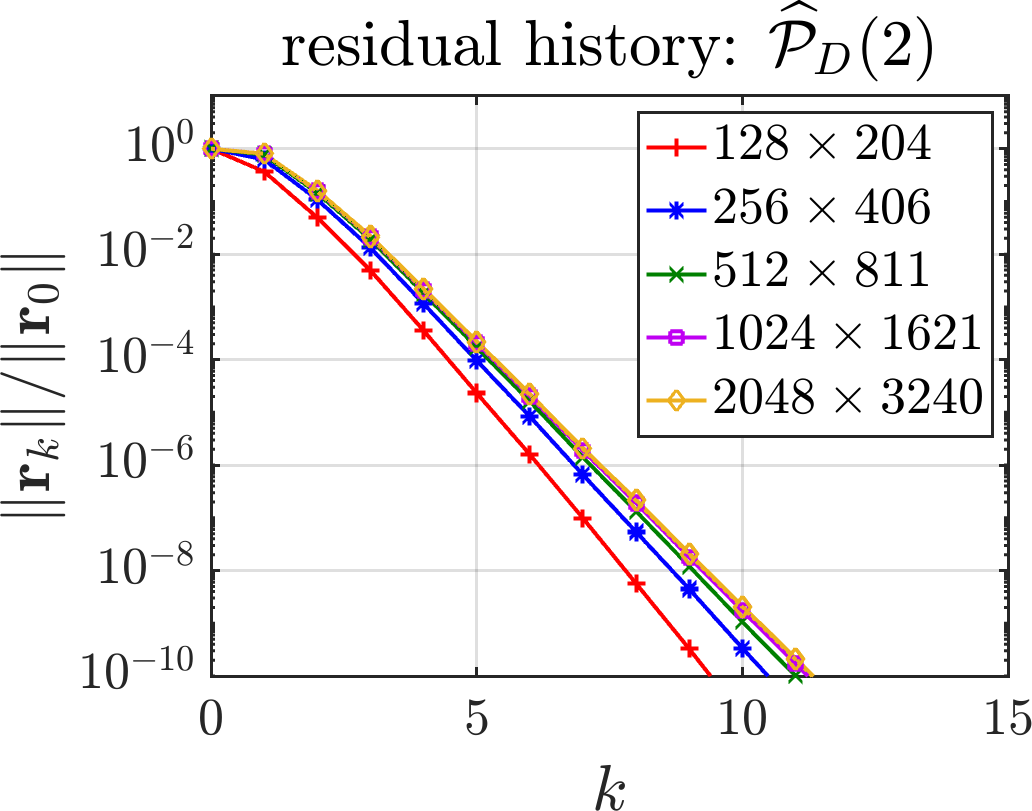}
\includegraphics[width=0.335\textwidth]{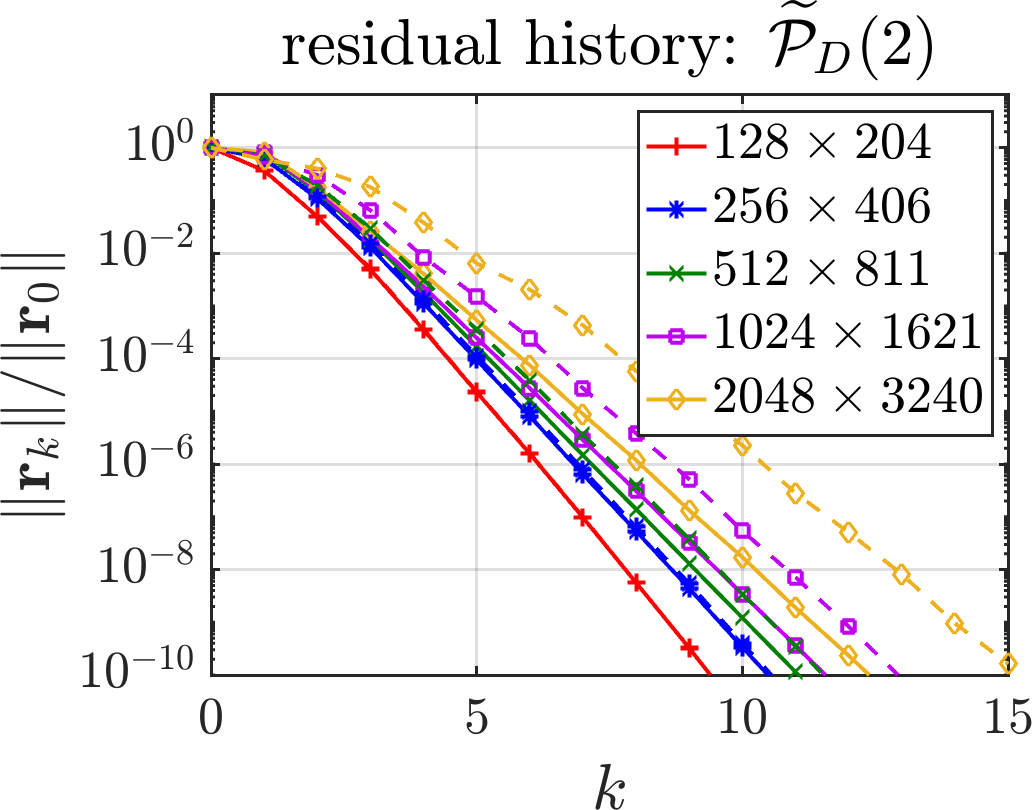}
}
\caption{
Larger-amplitude \eqref{eq:SWE-idp} problem for \eqref{eq:SWE}, with $\varepsilon = 0.6$.
\label{fig:SWE-IDP-fully}
}
\end{figure}

Some commentary is now in order. For all problems, when the linearized problems are solved directly, the solver quickly converges (i.e., the dotted lines in plots titled $\wh{{\cal P}}$(\texttt{inner-it}), noting that these dotted lines often sit directly underneath the solid lines in these plots). 
This indicates that the idea of freezing the dissipation matrix in the linearization \eqref{eq:Phi-lin-def} is reasonable. 
This result also suggests that if one wishes to solve nonlinear hyperbolic systems parallel-in-time, then it suffices to know how to solve linear(ized) hyperbolic systems parallel-in-time.
In all of the small-amplitude problems, the solver converges quickly, and there is essentially no deterioration in convergence when direct solves of linearized problems are replaced with a single iteration of either block preconditioner $\wh{{\cal P}}$ or $\wt{{\cal P}}$. 
We also see little, if any, deterioration in convergence when diagonal blocks of $\wt{{\cal P}}$ are inverted inexactly via MGRIT---the dashed lines in plots titled $\wt{{\cal P}}$(\texttt{inner-it}).
\begin{figure}[t!]
\centerline{
\includegraphics[width=0.345\textwidth]{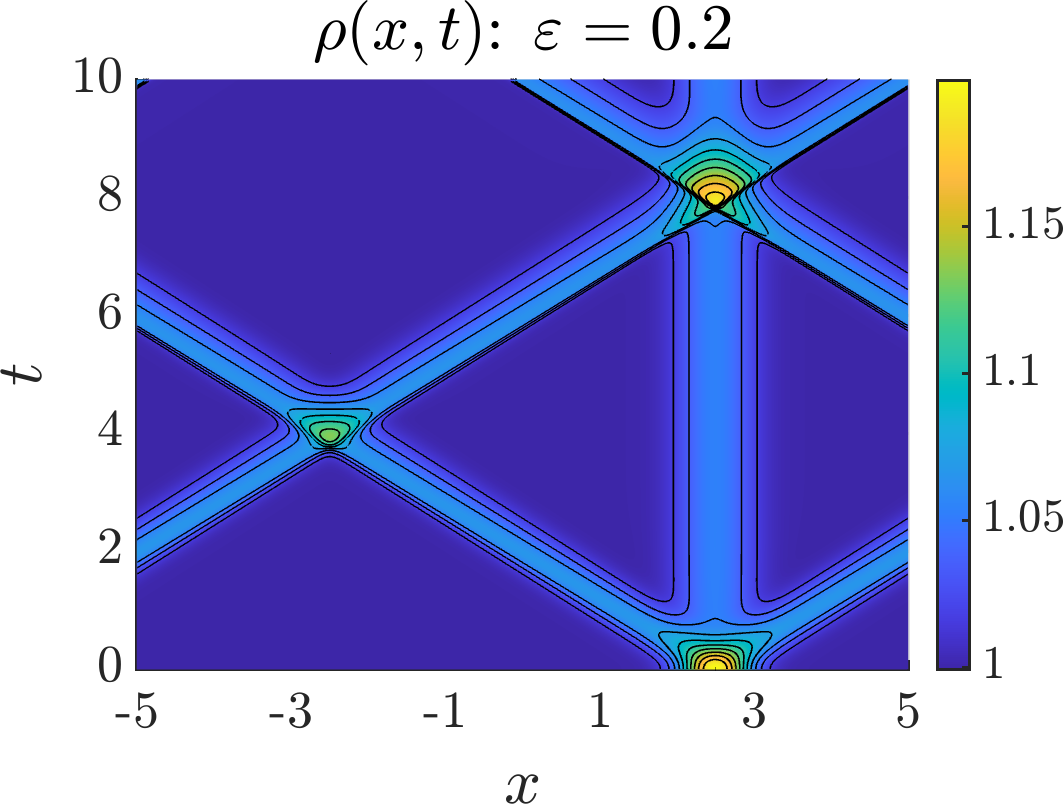}
\hspace{0ex}
\includegraphics[width=0.325\textwidth]{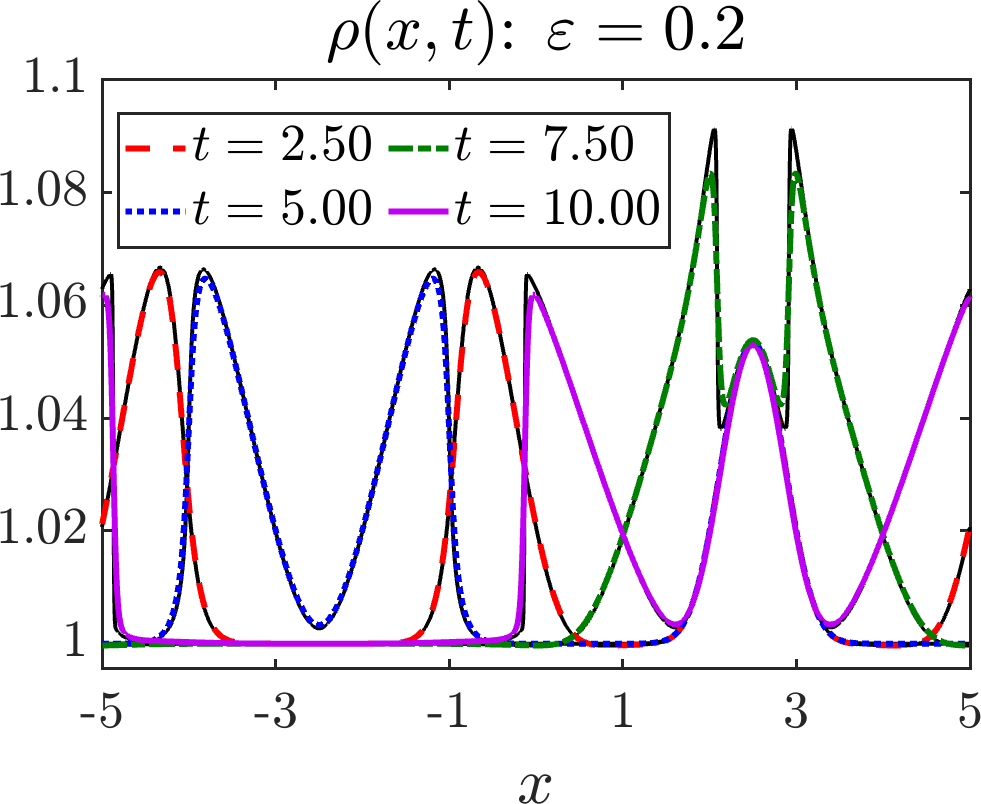}
\hspace{0ex}
\includegraphics[width=0.325\textwidth]{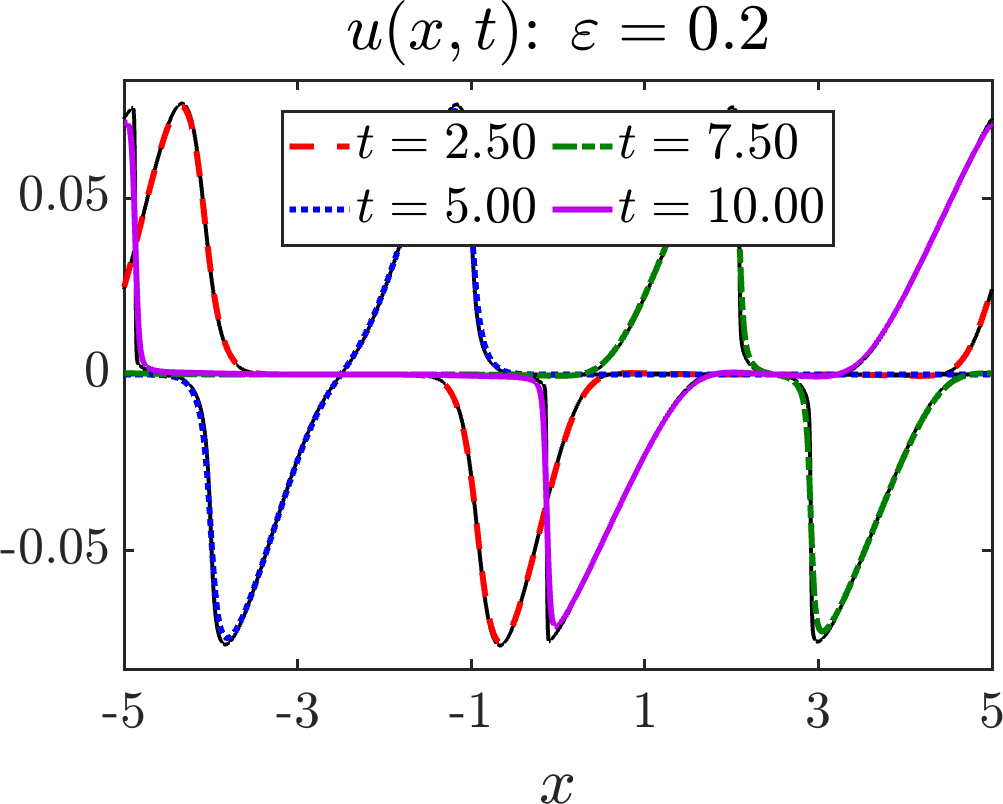}
}
\centerline{
\includegraphics[width=0.335\textwidth]{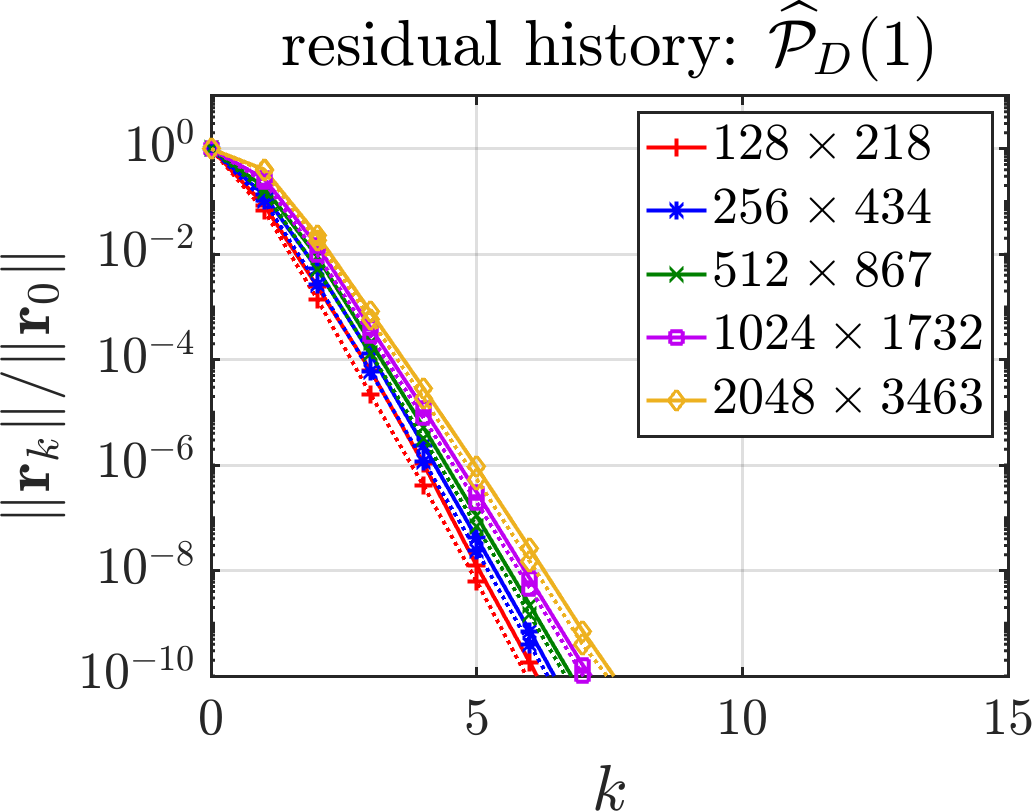}
\includegraphics[width=0.335\textwidth]{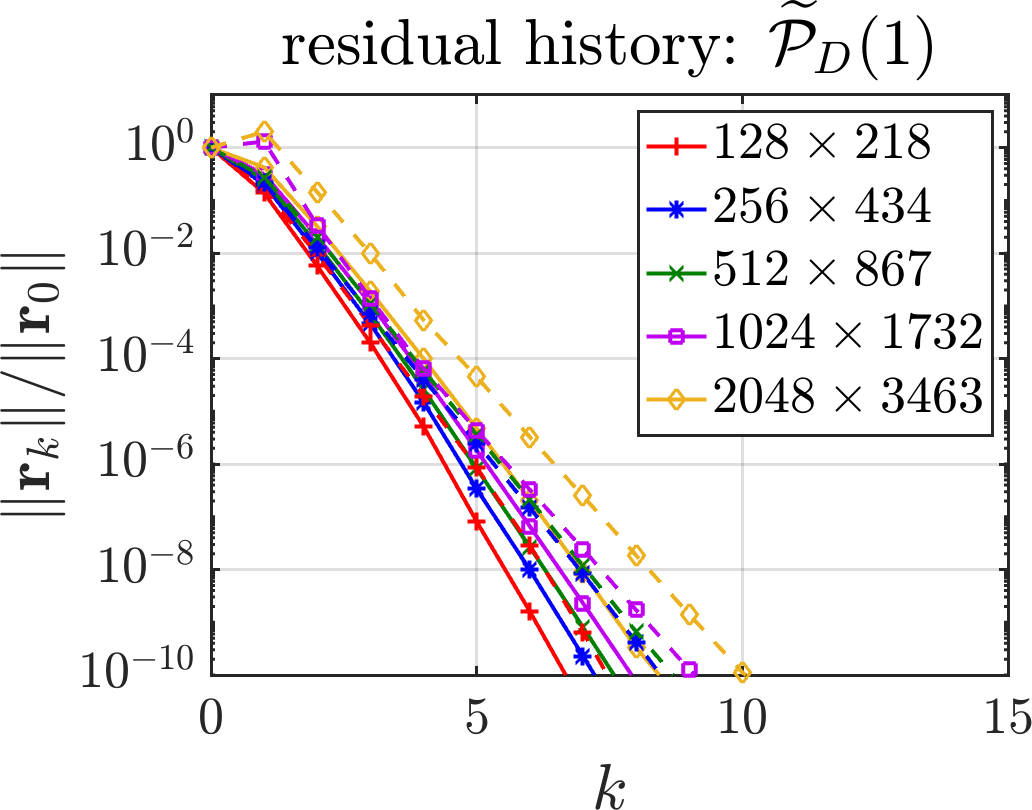}
}
\caption{
Small-amplitude \eqref{eq:euler-idp} problem for \eqref{eq:euler} with $\varepsilon = 0.2$.
\label{fig:euler-IDPP-weakly}
}
\end{figure}
\begin{figure}[t!]
\centerline{
\includegraphics[width=0.345\textwidth]{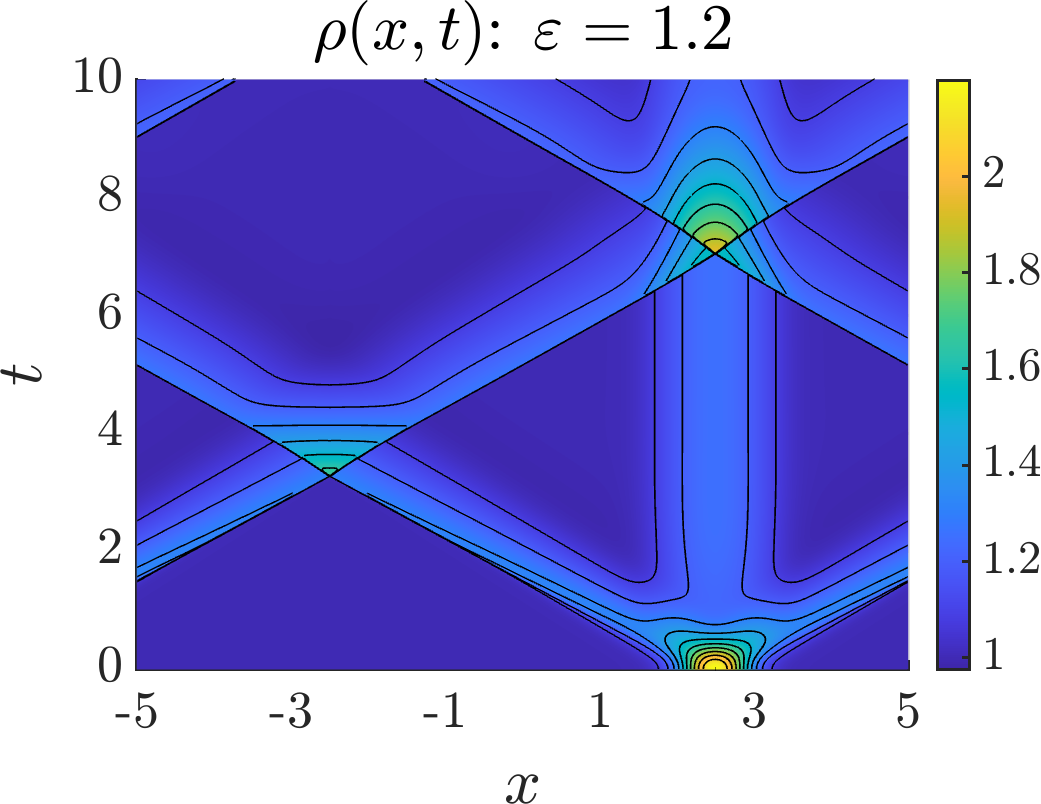}
\hspace{0ex}
\includegraphics[width=0.325\textwidth]{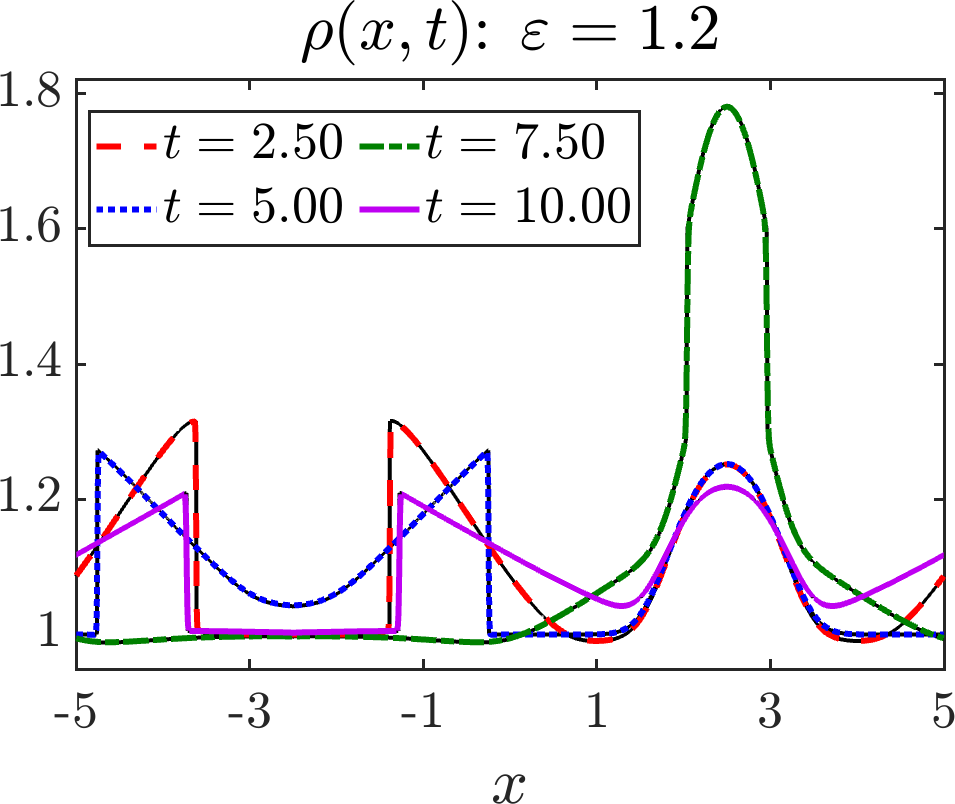}
\hspace{0ex}
\includegraphics[width=0.325\textwidth]{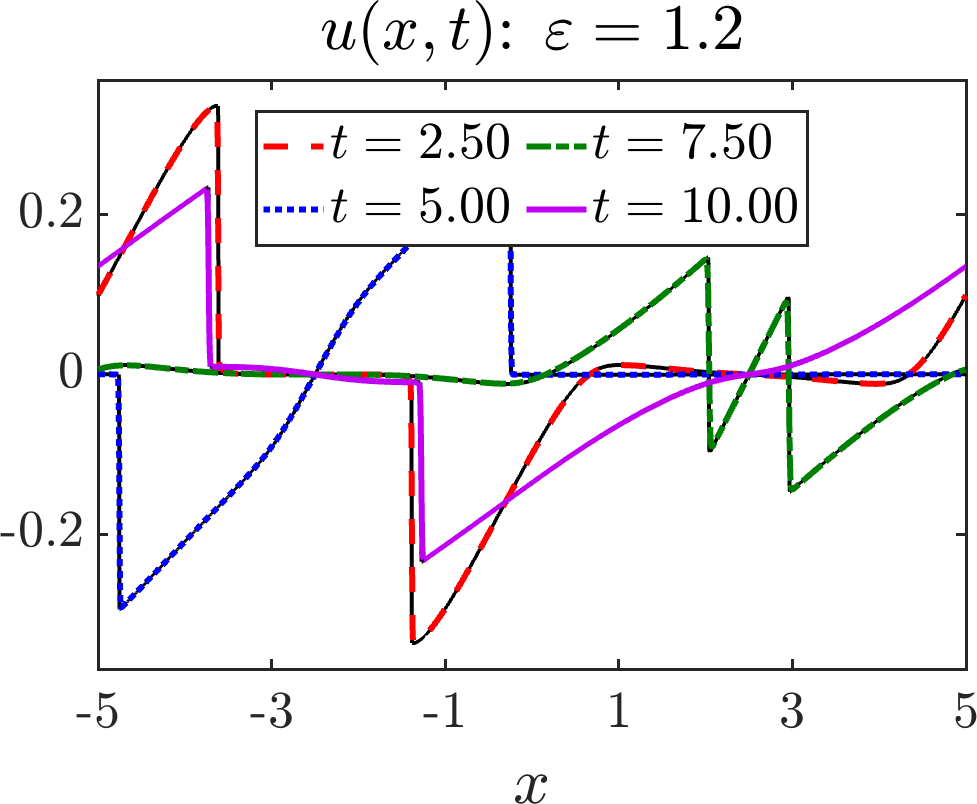}
}
\centerline{
\includegraphics[width=0.335\textwidth]{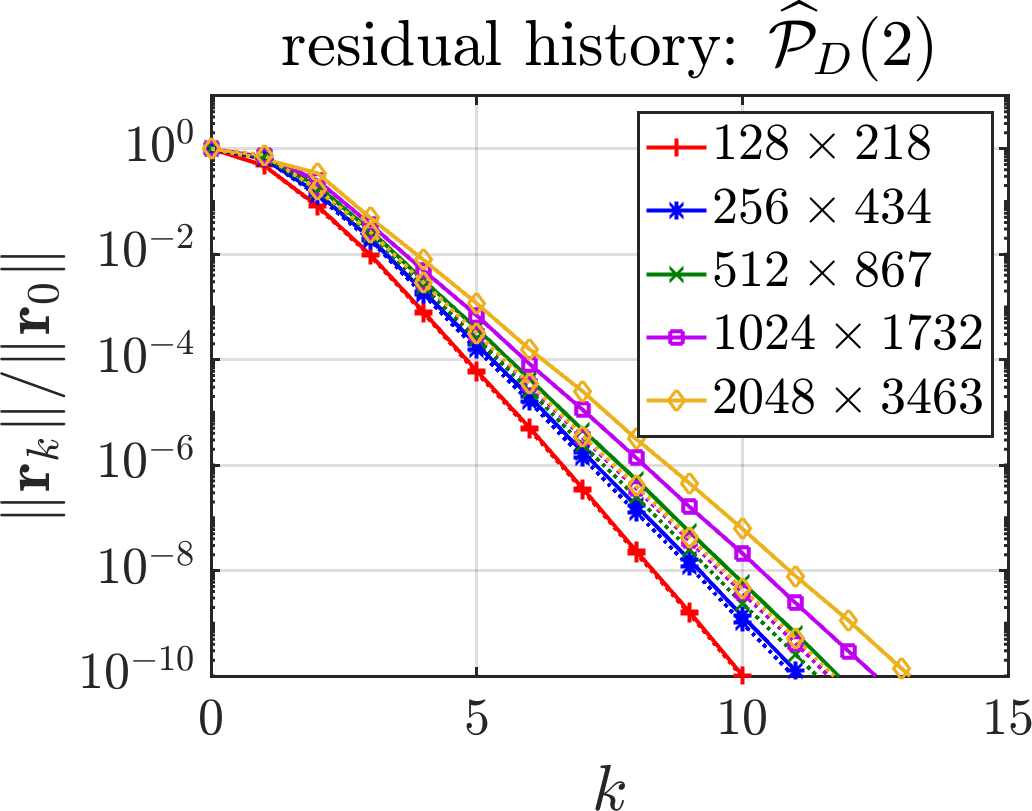}
\includegraphics[width=0.335\textwidth]{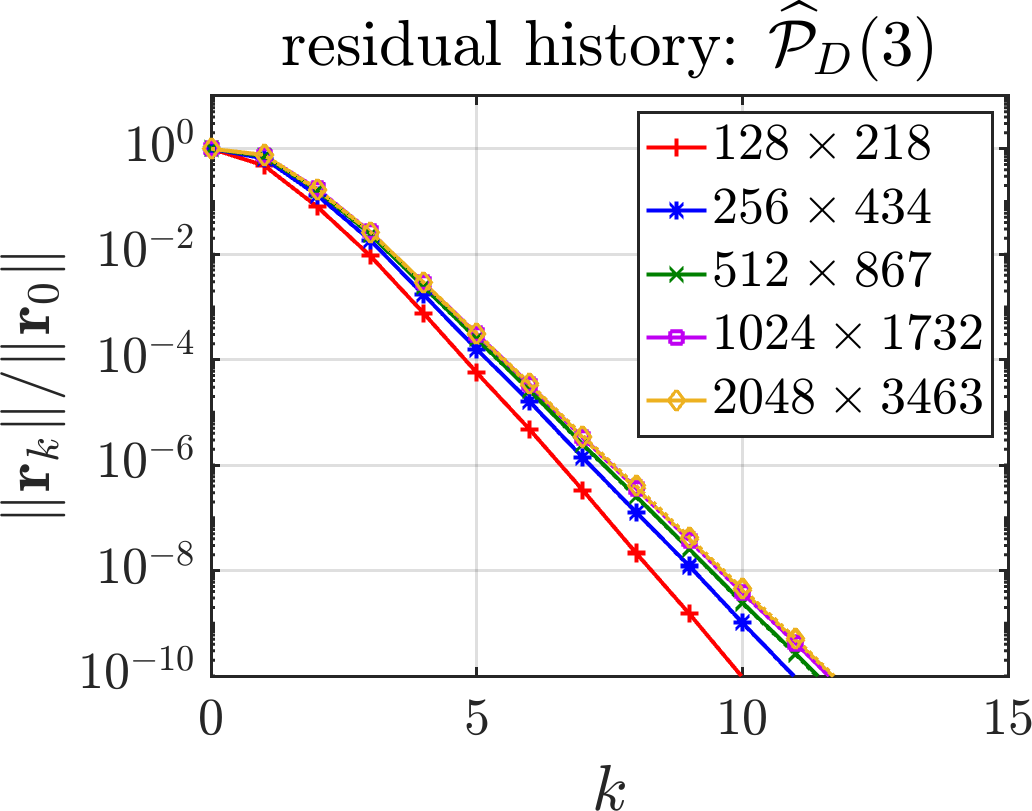}
\includegraphics[width=0.335\textwidth]{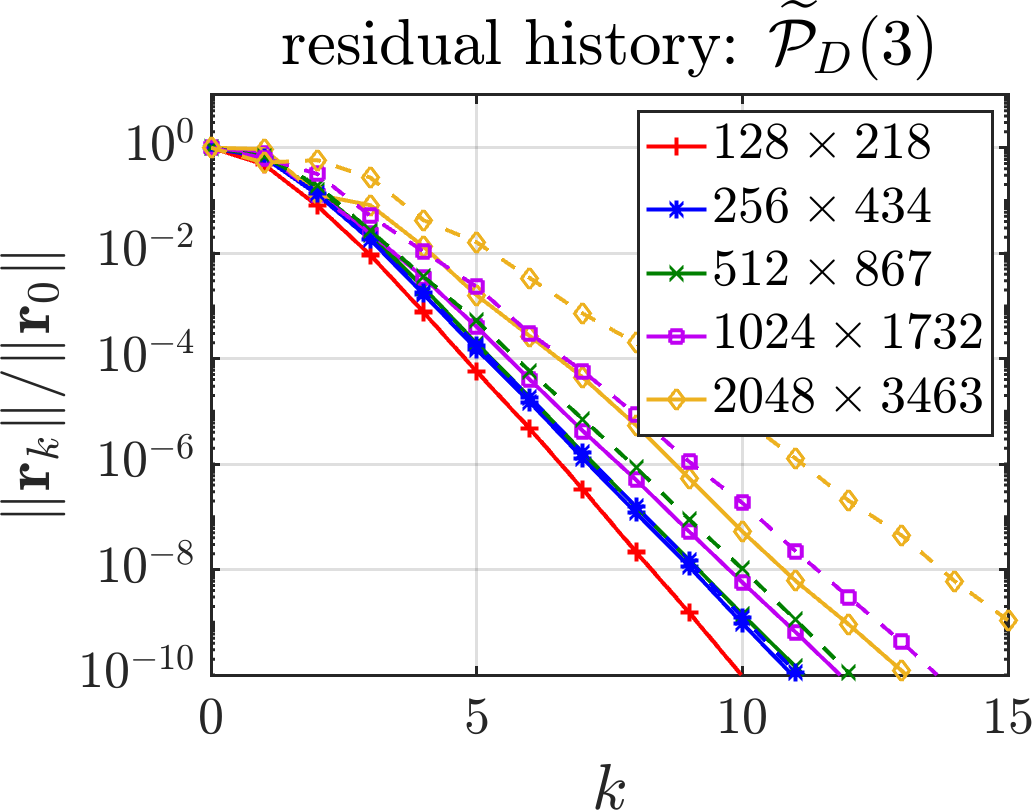}
}
\caption{
Larger-amplitude \eqref{eq:euler-idp} problem for \eqref{eq:euler} with $\varepsilon = 1.2$.
\label{fig:euler-IDPP-fully}
}
\end{figure}

For the larger-amplitude problems, we see that, unsurprisingly, the number of iterations increases relative to the small-amplitude problems, but we emphasize that the solver still converges very quickly, at least when direct solves of linearized problems are used.
When direct solves are replaced with block preconditioning, there is a slow degradation in convergence speed of the solver, with multiple inner iterations being required to restore scalability with respect to mesh size. 
In fact, in some cases the solver fails on sufficiently fine meshes if enough inner iterations are not used; for example, when $\wt{{\cal P}}$(1) is applied with exact inverses for $\wt{{\cal A}}_{ii}$ to the $(n_x, n_t) = (2048, 3463)$ problem in \cref{fig:euler-IDPP-fully}, the solver produces a negative density. 
As a second example, even $\wt{{\cal P}}$(5) produced a negative density for the larger-amplitude Sod problem in Supplementary Materials Figure \ref{SMfig:euler-sod-fully} with $(n_x, n_t) = (2048, 1348)$.
Furthermore, in the larger-amplitude problems, there is often a large deterioration in performance when moving $\wh{{\cal P}}$ to $\wt{{\cal P}}$.
Thus, while these results represent an advancement in the area of parallel-in-time methods for nonlinear hyperbolic systems, and the number of outer iterations is still quite small for these difficult nonlinear problems, they indicate that further research is required to develop methods capable of solving nonlinear problems with strong shocks with iteration numbers that are fully mesh independent.
%

\section{Conclusions and future outlook}
\label{sec:conclusion}

We have presented a framework for the all-at-once solution of systems of hyperbolic PDEs in one spatial dimension, both linear and nonlinear, that can admit parallelism in time.
The crux of the approach is a block preconditioner applied in the characteristic-variables associated with the linear(ized) PDE.
Transforming to characteristic variables is motivated by the observation that inter-variable coupling is generally weaker than in the original variables.
For an $\ell$-dimensional PDE system, application of the preconditioner requires solving $\ell$ scalar linear-advection-like PDEs, which can, at least in principle, be approximated parallel-in-time.
While we considered only serial implementations, the small number of MGRIT iterations required in our acoustics and small-amplitude nonlinear tests is consistent with obtaining parallel speed-ups over sequential time-stepping \cite{DeSterck_etal_2023_SL,DeSterck_etal_2023_MOL}.

In essence, the effectiveness of our approach boils down to the approximation of a block lower triangular space-time Schur complement.
In a certain sense, the complexity of this Schur complement relates to the coupling strength between characteristic variables in the linear(ized) PDE.
Our current approach deteriorates with increasingly strong spatio-temporal variation in the eigenvectors of the flux Jacobian, e.g., when transitioning from a problem with weak shocks to one with stronger shocks.
In future work we hope to improve the robustness of our solver against such variations.
Higher-order discretizations could also be considered, which for acoustics in heterogeneous media are often nonlinear due to limiters \cite{Fogarty-LeVeque-1999}, and for systems of nonlinear conservation laws there is potential based on our previous success with high-order WENO discretizations of scalar conservation laws \cite{DeSterck-etal-2023-nonlin-scalar}.
Other directions include parallel implementations, the application of these ideas to advection-diffusion PDE systems like the Navier--Stokes equations, and problems in multiple spatial dimensions, which may be handled by dimensionally splitting the problem into a sequence of one-dimensional space-time problems (with each then being solved with the method proposed herein).
%

\section*{Acknowledgments}

Critical feedback from anonymous referees is gratefully acknowledged.

\bibliographystyle{siamplain}
\bibliography{nonlinear-systems-1d}

\appendix

\section{Proof of \cref{lem:god-char-stencils}}
\label{app:proofs}

\begin{proof}
We begin by computing the triple-matrix product $\wh{\Phi} := {\cal R}_0^{-1} \Phi {\cal R}_0$,
where
\begin{align}
{\cal R}_0^{-1}
=
\frac{1}{2}
 \begin{bmatrix}
-{\cal Z}_0^{-1} & I \\
{\cal Z}_0^{-1} & I
\end{bmatrix}
,
\quad
\Phi 
=
\begin{bmatrix}
\Phi_{pp} & \Phi_{pu} \\
\Phi_{up} & \Phi_{uu}
\end{bmatrix},
\quad
{\cal R}_0
=
 \begin{bmatrix}
-{\cal Z}_0 & {\cal Z}_0 \\
I & I
\end{bmatrix}.
\end{align}
Multiplying this out gives
\begin{subequations} 
\begin{align}
\wh{\Phi}_{22}
&=
\tfrac{1}{2}
\Big[
\big(
\Phi_{uu} + {\cal Z}_0^{-1} \Phi_{pp} {\cal Z}_0
\big)
+
\big(
\Phi_{up} {\cal Z}_0
+
{\cal Z}_0^{-1} \Phi_{pu} 
\big)
\Big],
\\
\wh{\Phi}_{11}
&=
\tfrac{1}{2}
\Big[
\big(
\Phi_{uu} + {\cal Z}_0^{-1} \Phi_{pp} {\cal Z}_0
\big)
-
\big(
\Phi_{up} {\cal Z}_0
+
{\cal Z}_0^{-1} \Phi_{pu} 
\big)
\Big],
\\
\wh{\Phi}_{12}
&=
\tfrac{1}{2}
\Big[
\big(
\Phi_{uu} - {\cal Z}_0^{-1} \Phi_{pp} {\cal Z}_0
\big)
+
\big(
\Phi_{up} {\cal Z}_0
-
{\cal Z}_0^{-1} \Phi_{pu} 
\big)
\Big],
\\
\wh{\Phi}_{21}
&=
\tfrac{1}{2}
\Big[
\big(
\Phi_{uu} - {\cal Z}_0^{-1} \Phi_{pp} {\cal Z}_0
\big)
-
\big(
\Phi_{up} {\cal Z}_0
-
{\cal Z}_0^{-1} \Phi_{pu} 
\big)
\Big].
\end{align}
\end{subequations} 
Notice that the same four matrices appear in each of the four blocks here, i.e., those inside the open parentheses.
Plugging in the form of the blocks of $\Phi$ from \eqref{eq:acoustics-god-Phi} and using the fact that ${\cal C}_0$ and ${\cal Z}_0^{-1}$ commute due to their both being diagonal gives
\begin{subequations} \label{eq:acoustics-whPhi-simplified}
\begin{alignat}{2}
\wh{\Phi}_{22}
&=
I 
&&- \frac{\delta t}{2 h} {\cal C}_0
\Big[
\big( {\cal L}_{uu} + {\cal Z}_0^{-1} {\cal L}_{pp} {\cal Z}_0 \big)
+
\big( {\cal L}_{up} {\cal Z}_0 + {\cal Z}_0^{-1} {\cal L}_{pu} \big)
\Big],
\\
\wh{\Phi}_{11}
&=
I 
&&- \frac{\delta t}{2 h} {\cal C}_0
\Big[
\big( {\cal L}_{uu} + {\cal Z}_0^{-1} {\cal L}_{pp} {\cal Z}_0 \big)
-
\big( {\cal L}_{up} {\cal Z}_0 + {\cal Z}_0^{-1} {\cal L}_{pu} \big)
\Big],
\\
\wh{\Phi}_{12}
&=
&&- \frac{\delta t}{2 h} {\cal C}_0
\Big[
\big( {\cal L}_{uu} - {\cal Z}_0^{-1} {\cal L}_{pp} {\cal Z}_0 \big)
+
\big( {\cal L}_{up} {\cal Z}_0 - {\cal Z}_0^{-1} {\cal L}_{pu} \big)
\Big],
\\
\wh{\Phi}_{21}
&=
&&- \frac{\delta t}{2 h} {\cal C}_0
\Big[
\big( {\cal L}_{uu} - {\cal Z}_0^{-1} {\cal L}_{pp} {\cal Z}_0 \big)
-
\big( {\cal L}_{up} {\cal Z}_0 - {\cal Z}_0^{-1} {\cal L}_{pu} \big)
\Big].
\end{alignat}
\end{subequations} 
Now we compute ${\cal Z}_0^{-1} {\cal L}_{pp}  {\cal Z}_0$, ${\cal Z}_0^{-1} {\cal L}_{pu}$, ${\cal L}_{up} {\cal Z}_0$, and ${\cal L}_{uu}$, with stencils for ${\cal L}_{ij}$ in \eqref{eq:acoustics-L-stencils}.
Since ${\cal Z}_0^{-1}$ is a diagonal matrix, left-multiplying a matrix by it scales the rows of that matrix by its diagonal entries, while right-multiplying a matrix by ${\cal Z}_0$ scales the columns of that matrix by the diagonal entries of ${\cal Z}_0$.
Thus,
\begin{subequations} \label{eq:acoustics-L-stencils-scaled}
\begin{align}
\big[  {\cal Z}_0^{-1} {\cal L}_{pp}  {\cal Z}_0 \big]_i
&=
\left[
- \frac{Z_{i-1}}{Z_{i-1} + Z_{i}}, \,
   \frac{Z_i}{Z_{i-1} + Z_{i}} + \frac{Z_{i}}{Z_{i} + Z_{i+1}}, \,
- \frac{Z_{i+1}}{Z_{i} + Z_{i+1}}
\right],
\\
\big[ {\cal Z}_0^{-1} {\cal L}_{pu} \big]_i
&=
\left[
- \frac{Z_{i-1}}{Z_{i-1} + Z_{i}}, \,
   \frac{Z_{i-1}}{Z_{i-1} + Z_{i}} - \frac{Z_{i+1}}{Z_{i} + Z_{i+1}}, \,
   \frac{Z_{i+1}}{Z_{i} + Z_{i+1}}
\right],
\\
\big[ {\cal L}_{up} {\cal Z}_0 \big]_i
&=
\left[
- \frac{Z_{i-1}}{Z_{i-1} + Z_{i}}, \,
   \frac{Z_{i}}{Z_{i-1} + Z_{i}} - \frac{Z_{i}}{Z_{i} + Z_{i+1}}, \,
   \frac{Z_{i+1}}{Z_{i} + Z_{i+1}}
\right],
\\
\big[ {\cal L}_{uu} \big]_i
&=
\left[
- \frac{Z_{i-1}}{Z_{i-1} + Z_{i}}, \,
   \frac{Z_{i-1}}{Z_{i-1} + Z_{i}} + \frac{Z_{i+1}}{Z_{i} + Z_{i+1}}, \,
- \frac{Z_{i+1}}{Z_{i} + Z_{i+1}}
\right].
\end{align}
\end{subequations}
Using the above expressions, we now compute the stencils of the terms in \eqref{eq:acoustics-whPhi-simplified}:
\begin{subequations}
\begin{align}
\big[ {\cal L}_{uu} + {\cal Z}_0^{-1} {\cal L}_{pp}  {\cal Z}_0 \big]_i
&=
\left[
- \frac{2 Z_{i-1}}{Z_{i-1} + Z_{i}}, \,
   2, \,
- \frac{2 Z_{i+1}}{Z_{i} + Z_{i+1}}
\right],
\\
\big[ {\cal L}_{uu} - {\cal Z}_0^{-1} {\cal L}_{pp}  {\cal Z}_0 \big]_i
&=
\left[
	0, \,
   \frac{Z_{i-1} - Z_{i}}{Z_{i-1} + Z_{i}} - \frac{Z_{i} - Z_{i+1}}{Z_{i} + Z_{i+1}}, \,
    0
\right],
\\
\big[ {\cal L}_{up} {\cal Z}_0 + {\cal Z}_0^{-1} {\cal L}_{pu}   \big]_i
&=
\left[
- \frac{2 Z_{i-1}}{Z_{i-1} + Z_{i}}, \,
   0, \,
   \frac{2 Z_{i+1}}{Z_{i} + Z_{i+1}}
\right],
\\
\big[ {\cal L}_{up} {\cal Z}_0 - {\cal Z}_0^{-1} {\cal L}_{pu}   \big]_i
&=
\left[
	0, \,
   -\frac{Z_{i-1} - Z_{i}}{Z_{i-1} + Z_{i}} - \frac{Z_{i} - Z_{i+1}}{Z_{i} + Z_{i+1}}, \,
   0
\right].
\end{align}
\end{subequations}
Plugging these expressions into \eqref{eq:acoustics-whPhi-simplified} and simplifying concludes the proof. 
\end{proof}


\newpage

\setcounter{section}{0}
\setcounter{equation}{0}
\setcounter{figure}{0}
\setcounter{table}{0}
\setcounter{page}{1}
\makeatletter
\renewcommand{\thesection}{SM\arabic{section}}
\renewcommand{\theequation}{SM\arabic{equation}}
\renewcommand{\thefigure}{SM\arabic{figure}}
\renewcommand{\thetable}{SM\arabic{table}}
\renewcommand{\thepage}{SM\arabic{page}}

\thispagestyle{plain} 

\headers{SUPPLEMENTARY MATERIALS: PinT solution of hyperbolic systems}{H. De Sterck, R. D. Falgout, O. A. Krzysik, J. B. Schroder}

\begin{center}
    \textbf{\normalsize\MakeUppercase{
    Supplementary Materials:
    Parallel-in-time solution of hyperbolic PDE systems via characteristic-variable block preconditioning}} 
    \vspace{6ex}
\end{center}

\section{Block preconditioning in primitive variables}
\label{SMsec:block-prec-prim}

In the main paper, to solve space-time systems arising from the discretization of systems of hyperbolic PDEs we propose a block-preconditioning strategy that first transforms to characteristic variables before applying block preconditioning techniques.
We argue that this transformation to characteristic variables is justified because the inter-variable coupling strength between characteristic variables is weak relative to that of the underlying primitive variables.
In fact, inter-variable coupling between primitive variables is \textit{not} weak, so that we cannot reasonably expect block preconditioning techniques to be effective on them.
In this section we further justify this line of reasoning by numerical experiments wherein we apply block preconditioning to the primitive variables of the acoustics problem (see Sections \ref{sec:acoustics} and \ref{sec:acoustics-pint}).
Recall from \eqref{eq:acoustics-god-Phi} that the fully discretized acoustics problem takes the form
\begin{align} \label{SMeq:acoustics-god-Phi}
\bm{q}^{n+1}
=
\Phi
\bm{q}^{n},
\quad
\Phi 
=
\begin{bmatrix}
\Phi_{pp} & \Phi_{pu} \\
\Phi_{up} & \Phi_{uu}
\end{bmatrix},
\quad
n = 0, 1, \ldots, n_t - 1,
\end{align}
with stacked spatial vector $\bm{q}^n := (\bm{p}^n, \bm{u}^n)^{\top} \in \mathbb{R}^{2 n_x}$, with $\bm{p}^n = (p_1^n, \ldots, p_{n_x}^n)^\top, \bm{u}^n = (u_1^n, \ldots, u_{n_x}^n)^\top$.
Further recall from \eqref{eq:godunov-all-at-once} that the global space-time system for the acoustic equations takes the form
\begin{align} \label{SMeq:godunov-all-at-once}
{\cal A} \bm{q} 
\equiv
\begin{bmatrix}
I & \\
-\Phi & I \\
& \ddots & \ddots \\
& & -\Phi & I
\end{bmatrix}
\begin{bmatrix}
\bm{q}^0 \\
\bm{q}^1 \\
\vdots \\
\bm{q}^{n_t-1} 
\end{bmatrix}
=
\begin{bmatrix}
\bm{q}^0 \\
\bm{0} \\
\vdots \\
\bm{0}
\end{bmatrix}
=:
\bm{b},
\end{align}
where ${\cal A} \in \mathbb{R}^{2 n_x n_t \times 2 n_x n_t}$, $\bm{q} \in \mathbb{R}^{2 n_x n_t}$, and $\bm{b} \in \mathbb{R}^{2 n_x n_t}$ are the space-time discretization matrix, space-time solution vector, and right-hand side vector, respectively.
Given an iterate $\bm{q}_k \approx \bm{q}$, the algebraic residual of \eqref{SMeq:godunov-all-at-once} is
\begin{align} \label{SMeq:rk}
\bm{r}_k = \bm{b} - {\cal A} \bm{q}_k.
\end{align}

A preconditioned residual correction scheme computes iterates according to \\${\bm{q}_{k + 1} = \bm{q}_k + {\cal P}^{-1} \bm{r}_k}$, where ${\cal P}^{-1} \bm{r}_k \approx {\cal A}^{-1} \bm{r}_k = \bm{e}_k = \bm{q} - \bm{q}_k$ is the algebraic error. 
Specifically in this section we consider preconditoners based on block preconditioning in the primitive variables $p$ and $u$.
To this end, let ${\cal Q} \in \mathbb{R}^{2 n_x n_t \times 2  n_x n_t}$ be a permutation matrix such that ${\cal Q} \bm{q} = (\bm{q}_p, \bm{q}_u)^\top$, where 
\begin{align}
\bm{q}_p = ( \bm{p}^0, \ldots, \bm{p}^{n_t-1} )^\top,
\quad
\bm{q}_u = ( \bm{u}^0, \ldots, \bm{u}^{n_t-1} )^\top.
\end{align}
Then we can re-order the system \eqref{SMeq:godunov-all-at-once} so that all time points for each primitive variable are blocked together:
\begin{align} \label{SMeq:reordered-2x2}
\left( {\cal Q} {\cal A} {\cal Q}^{\top} \right)
\left( {\cal Q} \bm{q} \right)
=
\begin{bmatrix}
{{\cal A}}_{pp} & {{\cal A}}_{pu} \\
{{\cal A}}_{up} & {{\cal A}}_{uu}
\end{bmatrix}
\begin{bmatrix}
\bm{q}_p \\
\bm{q}_u
\end{bmatrix}
=
\begin{bmatrix}
\bm{b}_p \\
\bm{b}_u
\end{bmatrix}
=
\left({\cal Q} \bm{b} \right),
\end{align}
where the $n_x n_t \times n_x n_t$ blocks in the block $2 \times 2$ matrix shown here are
\begin{align} 
{\cal A}_{pp}
=
\begin{bmatrix}
I \\
-{\Phi}_{pp} & I \\
& \ddots & \ddots 
\end{bmatrix},
\quad
{{\cal A}}_{pu}
=
\begin{bmatrix}
0 \\
-{\Phi}_{pu} & 0 \\
& \ddots & \ddots 
\end{bmatrix},
\\
{\cal A}_{uu}
=
\begin{bmatrix}
I \\
-{\Phi}_{uu} & I \\
& \ddots & \ddots 
\end{bmatrix},
\quad
{\cal A}_{up}
=
\begin{bmatrix}
0 \\
-{\Phi}_{up} & 0 \\
& \ddots & \ddots 
\end{bmatrix}.
\end{align}

\begin{figure}[b!]
\centerline{
\includegraphics[width=0.475\textwidth]{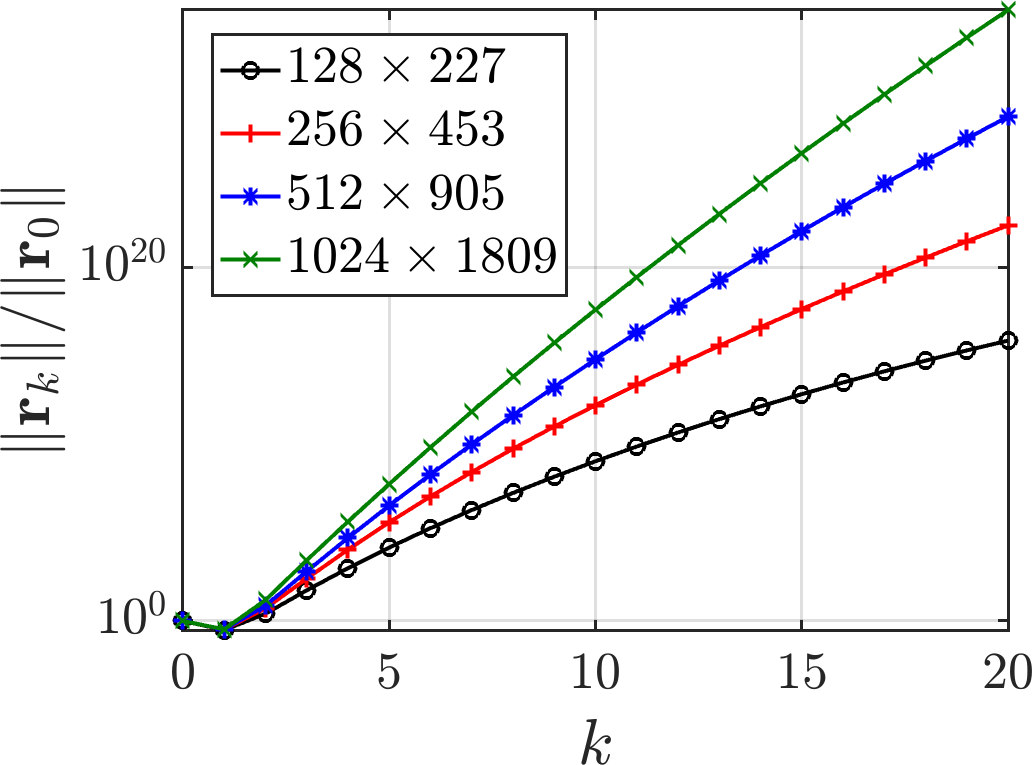}
\hspace{1ex}
\includegraphics[width=0.475\textwidth]{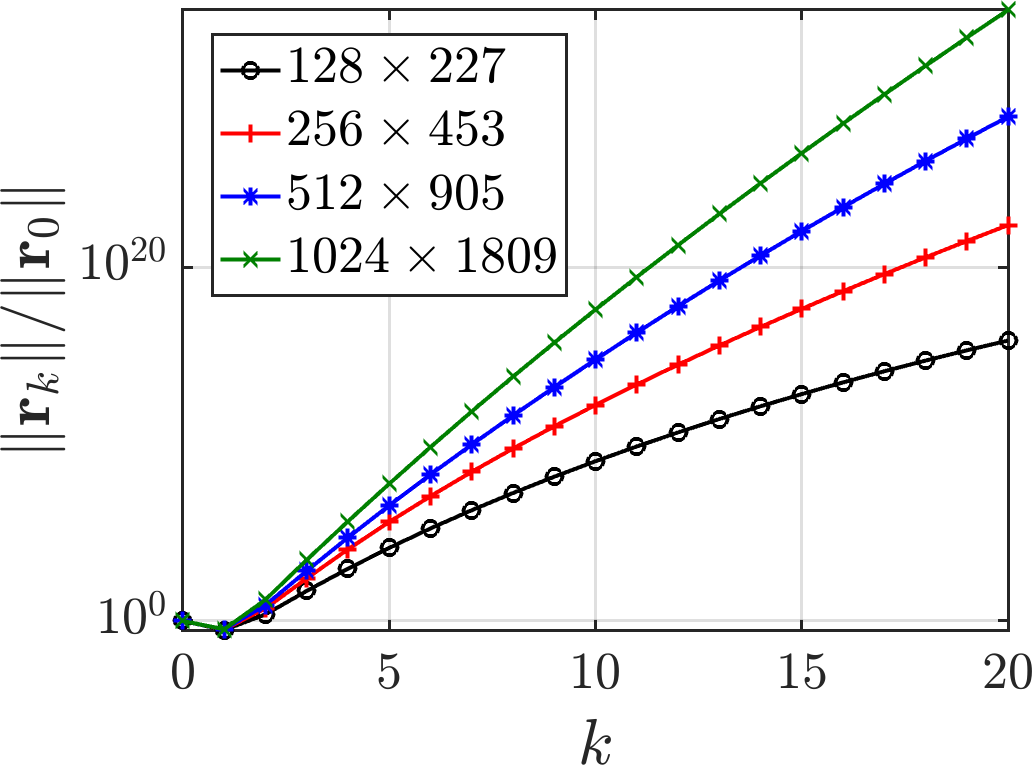}
}
\caption{
Block preconditioning in primitive variables for \eqref{eq:acoustic}. Shown is the residual history for the preconditioned residual scheme \eqref{SMeq:res-iter} when using the block Gauss--Seidel preconditioner \eqref{SMeq:P-GS}.
\textbf{Left:} Acoustics material parameters given by \eqref{eq:mat-param-1}. \textbf{Right:} Acoustics material parameters given by 
\eqref{eq:mat-param-2}.
Legends entries correspond to space-time mesh resolutions of $n_x \times n_t$.
\label{SMfig:acoustic-prec-prim}
}
\end{figure}

We can now create a block preconditioner ${\cal P}^{-1} \approx
 \begin{bmatrix}
{{\cal A}}_{pp} & {{\cal A}}_{pu} \\
{{\cal A}}_{up} & {{\cal A}}_{uu}
\end{bmatrix}^{-1}$, and consider the associated preconditioned residual scheme:
\begin{align} \label{SMeq:res-iter}
\bm{q}_{k+1}
=
\bm{q}_{k}
+
\big( {\cal Q}^\top {\cal P}^{-1} {\cal Q} \big) \bm{r}_k,
\quad
k = 0, 1, \ldots,
\end{align}
where $\bm{r}_k$ is the algebraic residual from \eqref{SMeq:rk}.
For simplicity, in our tests we just consider the block Gauss--Seidel preconditioner given by
\begin{align} \label{SMeq:P-GS}
{\cal P}^{-1}
=
\begin{bmatrix}
{\cal A}_{pp} & 0 \\
{\cal A}_{up} & {\cal A}_{uu}
\end{bmatrix}^{-1}.
\end{align}
We consider two sets of material parameters for the acoustics system: Those given in \eqref{eq:mat-param-1} and \eqref{eq:mat-param-2}.
Two-norm residual histories for the residual iteration \eqref{SMeq:res-iter} are shown in \cref{SMfig:acoustic-prec-prim}.
In all tests the initial iterate $\bm{q}_0$ is chosen with entries drawn randomly from a standard normal distribution.
In all tests the preconditoner ${\cal P}^{-1}$ is applied exactly, i.e., the diagonal blocks ${\cal A}_{pp}$ and ${\cal A}_{uu}$ are inverted exactly via time-stepping, and not approximately with MGRIT.
For both sets of material parameters the iteration diverges at all mesh resolutions.
We conclude that block preconditoning in the primitive variables is not effective, especially when compared to that in characteristic variables which is extremely effective on these problems (see Section \ref{sec:acoustics-num-res}).
This outcome is not surprising given that inter-variable coupling is not weak in the primitive variables, i.e., the ${\cal A}_{pu}$ block that is dropped in the preconditioner \eqref{SMeq:P-GS} is not small relative to the other blocks that are retained (see \eqref{eq:acoustics-god-Phi}).
%

\section{Flux Jacobians and their eigen decompositions}
\label{SMsec:flux-jacobians}
The flux Jacobian of \eqref{eq:SWE} is
\begin{align}
f'(\bm{q})
=
\begin{bmatrix}
0 & 1 \\
-u^2 + h & 2 u
\end{bmatrix}.
\end{align}
The eigenvalues of this matrix are $\lambda^1 = u - \sqrt{h}$ and $\lambda^2 = u + \sqrt{h}$, with corresponding right and left eigenvector matrices given by
\begin{align}
R
=
\begin{bmatrix}
1 & 1 \\
\lambda^1 & \lambda^2
\end{bmatrix},
\quad
R^{-1}
=
\frac{1}{\lambda^2 - \lambda^1}
\begin{bmatrix}
\lambda^2 & -1 \\
-\lambda^1 & 1
\end{bmatrix},
\end{align}
respectively.
The flux Jacobian of \eqref{eq:euler} is 
\begin{align}
f'(\bm{q})
=
\begin{bmatrix}
0 & 1 & 0 \\
\tfrac{1}{2} (\gamma - 3) u^2 & (3 - \gamma) u & \gamma - 1 \\
\tfrac{1}{2} (\gamma - 1) u^3 - u H & H - (\gamma - 1) u^2 & \gamma u
\end{bmatrix},
\end{align}
where $H = \frac{E + p}{\rho}$ is the total specific enthalpy, and $c = \sqrt{\frac{\gamma p}{\rho}}$ is the sound speed.
The eigenvalues are $\lambda^1 = u - c$, $\lambda^2 = u$, and $\lambda^3 = u + c$, with the associated right and left eigenvector matrices given by
\begin{align}
R
=
\begin{bmatrix}
1 & 1 & 1 \\
u - c & u &  u + c \\
H - u c & \tfrac{1}{2} u^2 & H + u c
\end{bmatrix},
\quad
R^{-1}
=
\tfrac{\gamma - 1}{2 c^2}
\begin{bmatrix}
H + \tfrac{c (u - c)}{\gamma - 1}  & -u - \tfrac{c}{\gamma - 1} & 1 \\
\tfrac{4 c^2}{\gamma - 1} - 2H  & 2u & -2 \\
H - \tfrac{c (u + c)}{\gamma - 1}  & -u + \tfrac{c}{\gamma - 1} & 1
\end{bmatrix},
\end{align}
respectively.

\newpage
\section{Additional numerical results for nonlinear problems}
\label{SMsec:num-res-nonlin}
Here we consider additional numerical results to complement those already in Section \ref{sec:cons-num-res}.
In particular we consider standard Riemann problems.
The layout of the plots in the figures below is the same as in Section \ref{sec:cons-num-res}, with the exception that residual histories with $\wt{\cal P}$(\texttt{inner-it}) in the title no longer include dashed lines corresponding to inexact inverses of $\wt{{\cal A}}_{ii}$ via MGRIT.

The two problems we consider are as follows. First, consider \eqref{eq:SWE} for the dam break problem:
\begin{align}
\tag{DB}
\label{SMeq:SWE-db}
h(x, 0) &= 
\begin{cases}
1 + \varepsilon, \quad &x < 0 \\
1 \quad &x \geq 0,
\end{cases} 
\quad
u(x, 0) = 0,
\end{align}
with $(x, t) \in (-10, 10) \times (0, 5)$, $c_0 = 0.7$, and with constant boundary conditions.
Small- and larger-amplitude results are shown in \cref{SMfig:SWE-DB-weakly,SMfig:SWE-DB-fully}, respectively.
Second, consider the following Sod problem for \eqref{eq:euler}:
\begin{align}
\tag{Sod}
\label{SMeq:euler-sod}
\rho(x, 0) = 
\begin{cases}
1, \quad &x < 0.5 \\
1 - \varepsilon, \quad &x \geq 0.5
\end{cases},
\quad
u(x, 0) = 0,
\quad
E(x, 0) = 
\frac{\rho(x, 0)}{\gamma - 1},
\end{align}
with $(x, t) \in (0, 1) \times (0, 0.25)$, $c_0 = 0.45$, and with constant boundary conditions. 
Small- and larger-amplitude results are shown in \cref{SMfig:euler-sod-weakly,SMfig:euler-sod-fully}, respectively.

\begin{figure}[h!]
\centerline{
\includegraphics[width=0.345\textwidth]{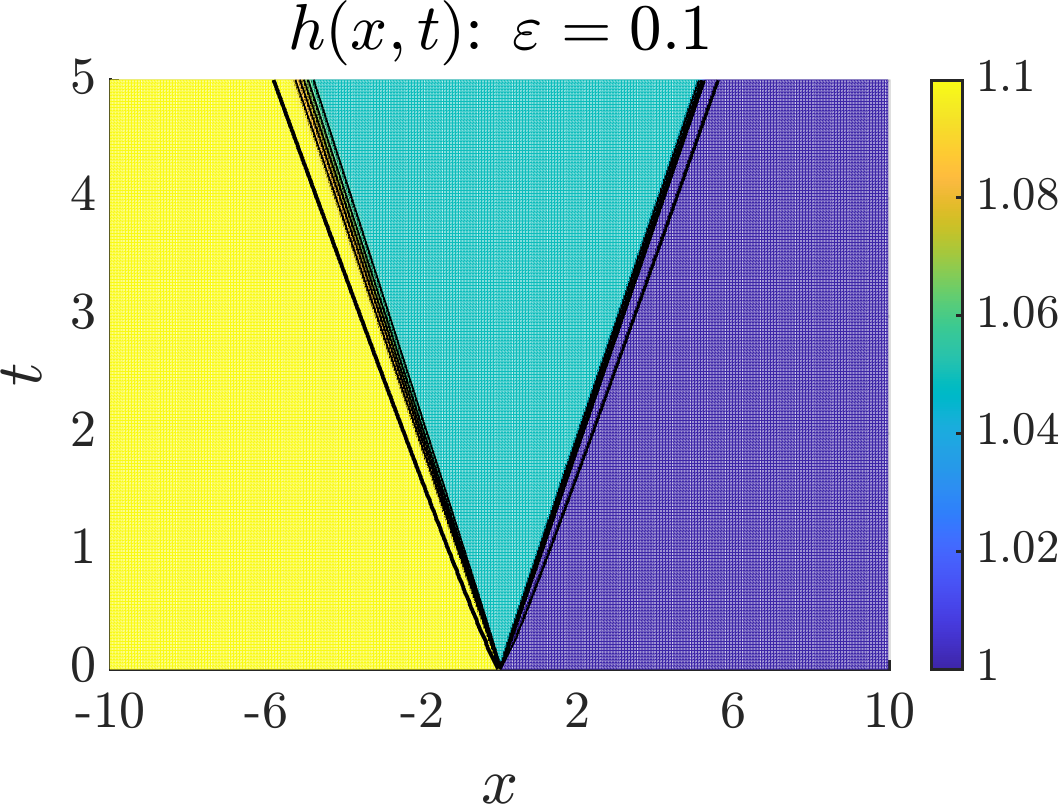}
\hspace{0ex}
\includegraphics[width=0.325\textwidth]{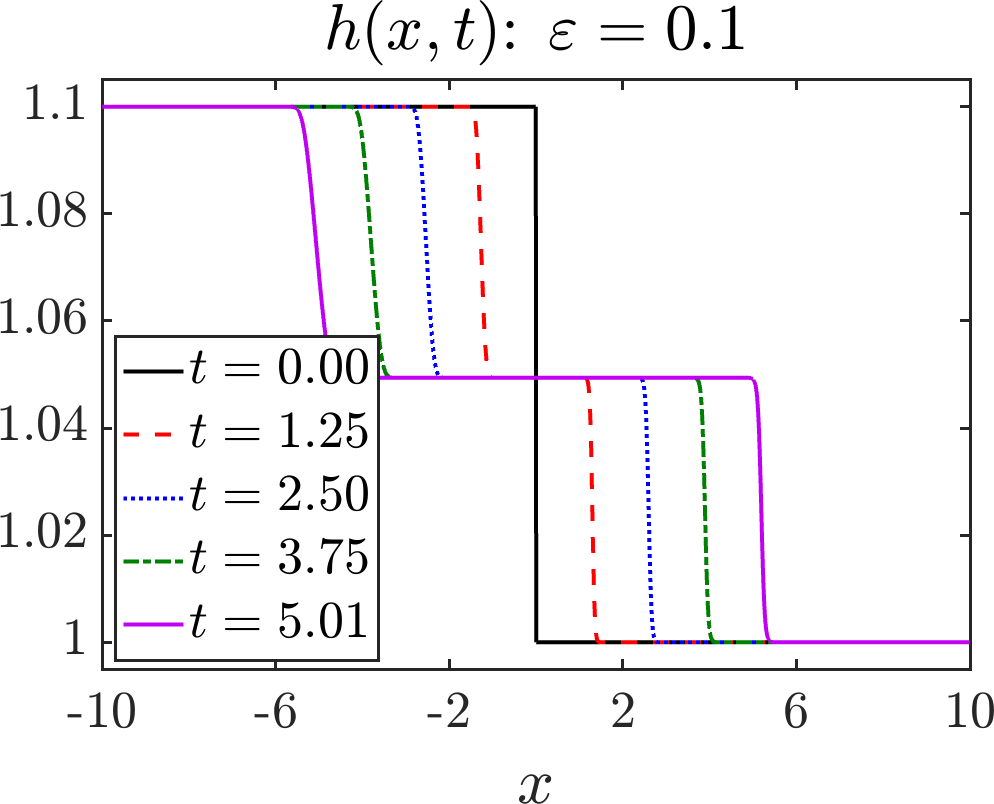}
\hspace{0ex}
\includegraphics[width=0.325\textwidth]{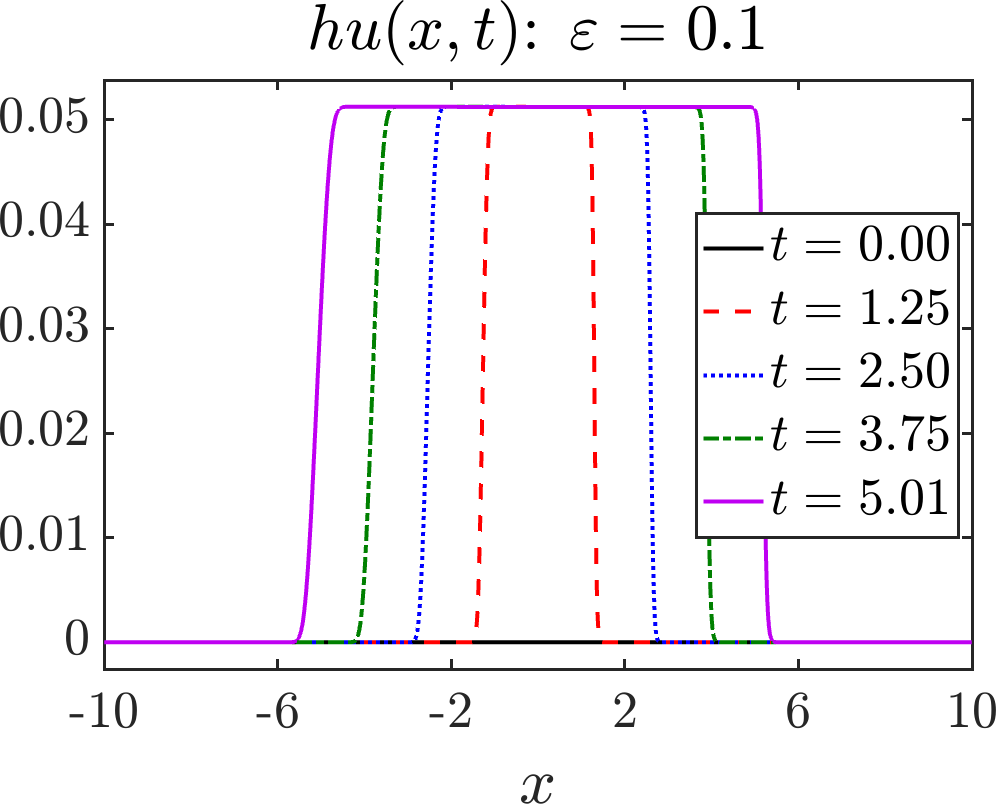}
}
\vspace{1ex}
\centerline{
\includegraphics[width=0.335\textwidth]{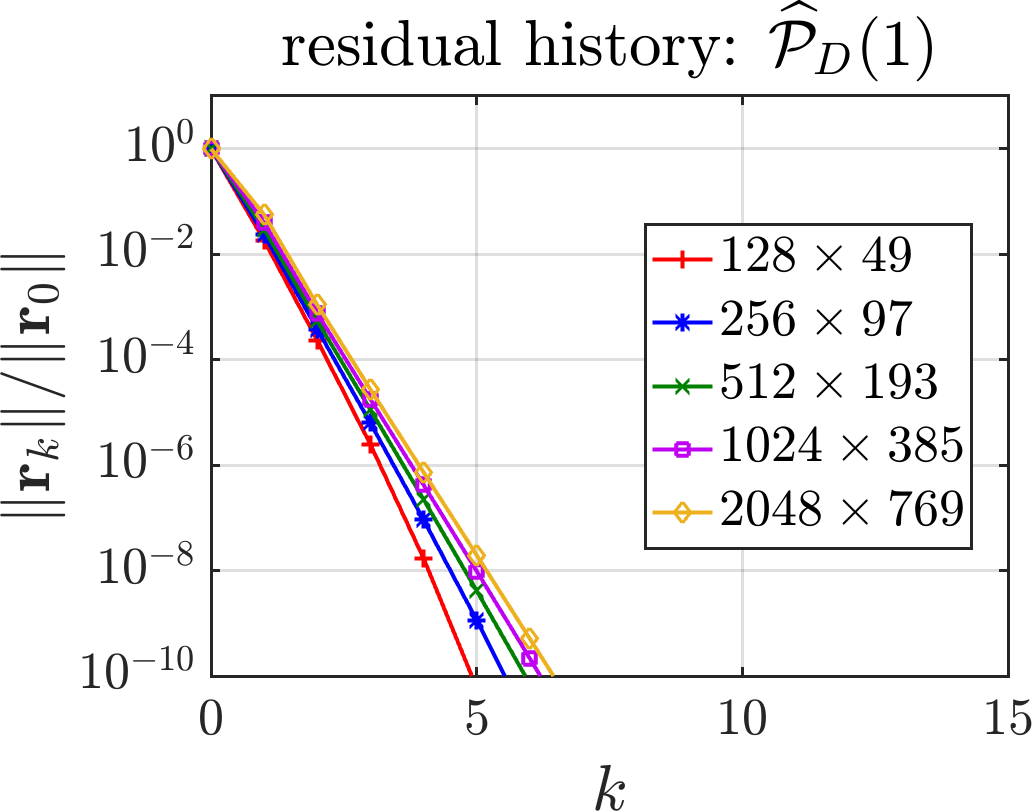}
\hspace{1ex}
\includegraphics[width=0.335\textwidth]{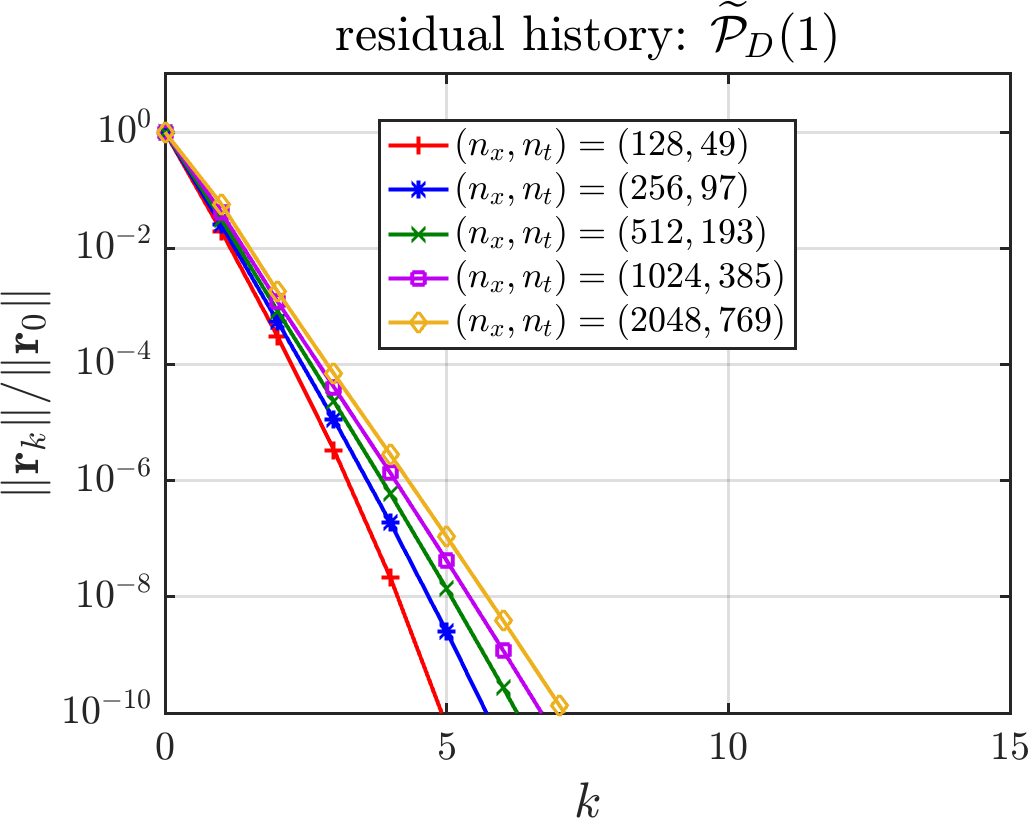}
}
\caption{
Small-amplitude dam break problem \eqref{SMeq:SWE-db} for \eqref{eq:SWE}, with $\varepsilon = 0.1$.
\label{SMfig:SWE-DB-weakly}
}
\end{figure}

\begin{figure}[t!]
\centerline{
\includegraphics[width=0.345\textwidth]{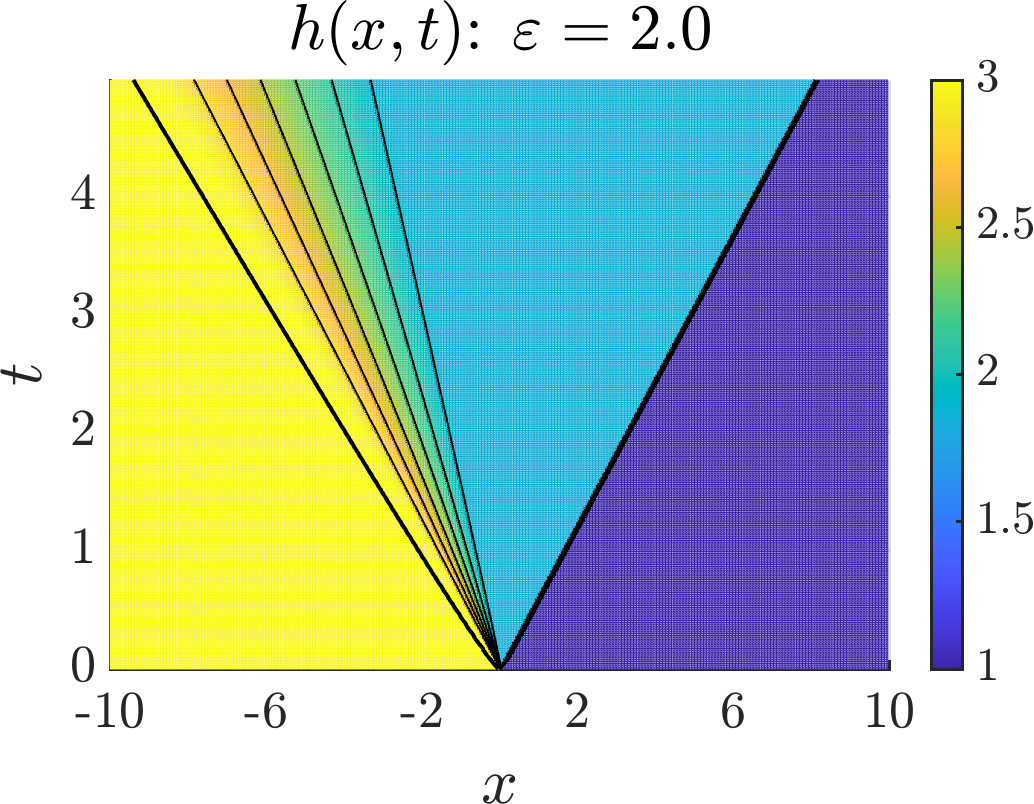}
\hspace{0ex}
\includegraphics[width=0.325\textwidth]{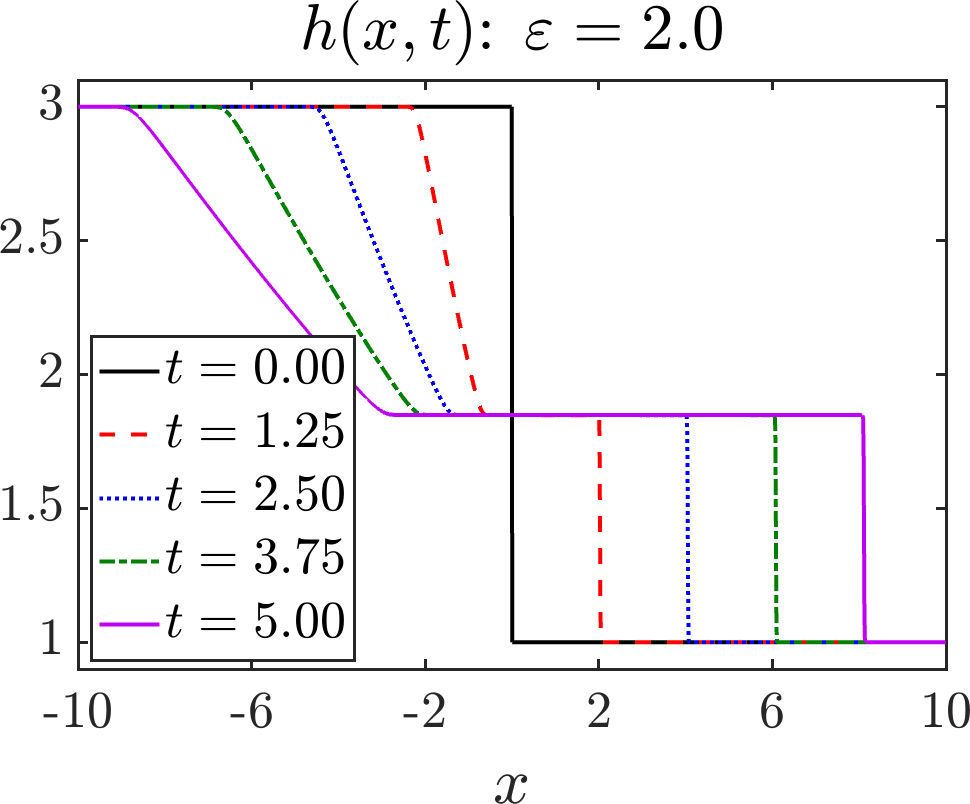}
\hspace{0ex}
\includegraphics[width=0.325\textwidth]{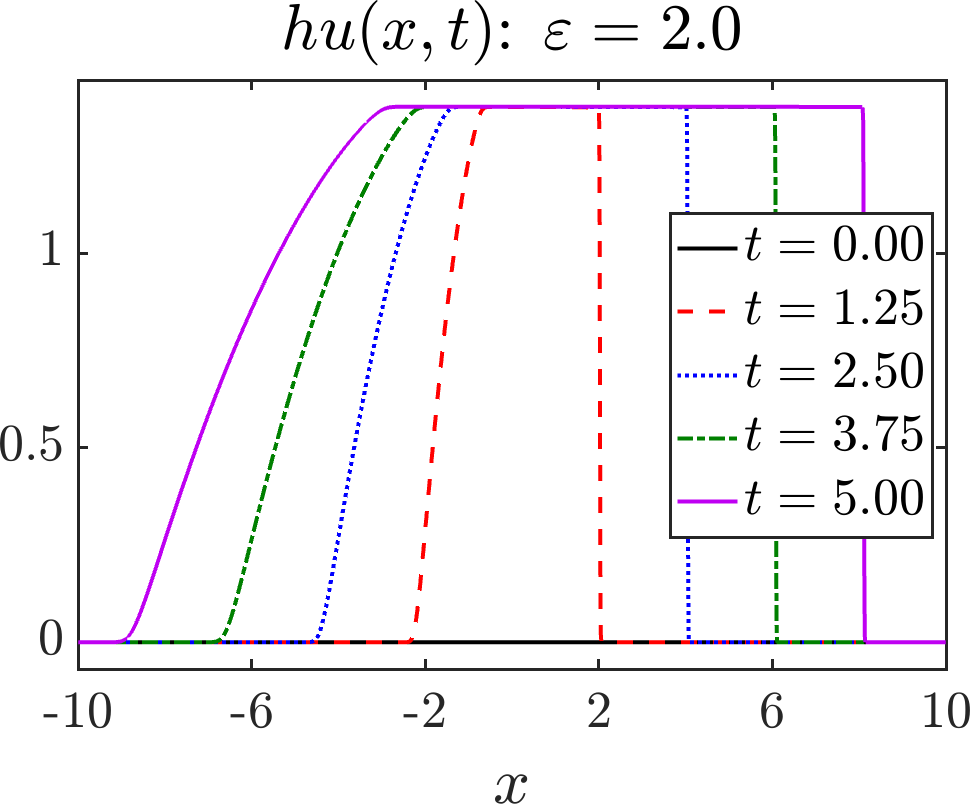}
}
\vspace{1ex}
\centerline{
\includegraphics[width=0.335\textwidth]{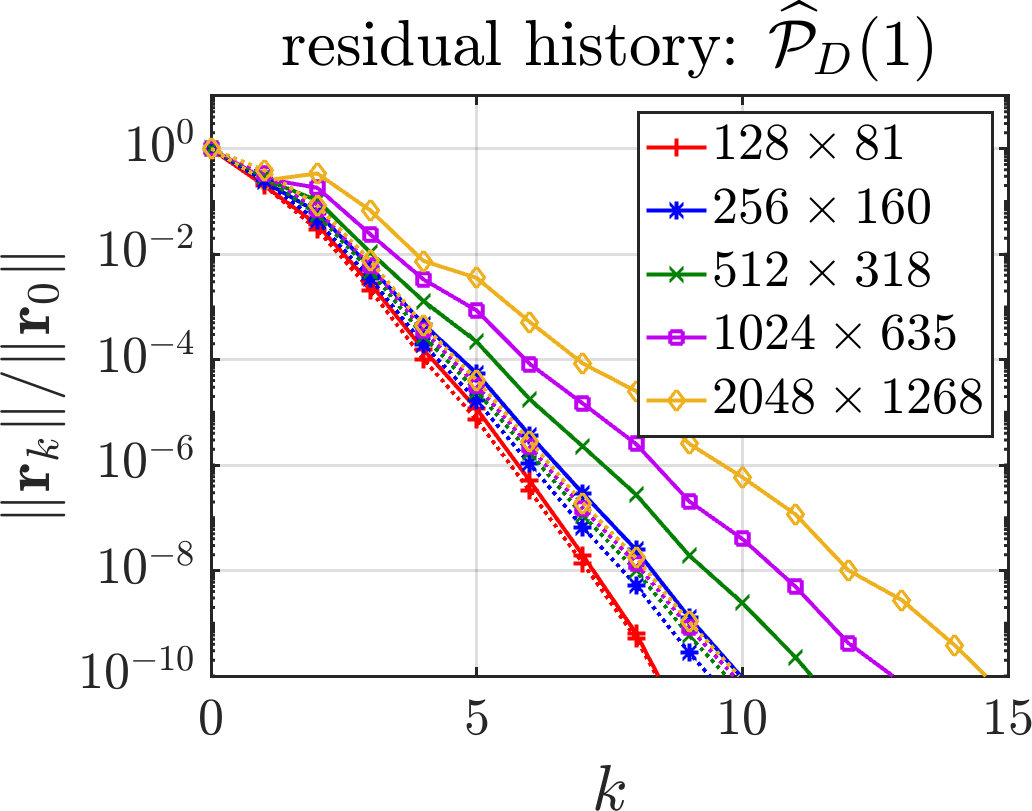}
\includegraphics[width=0.335\textwidth]{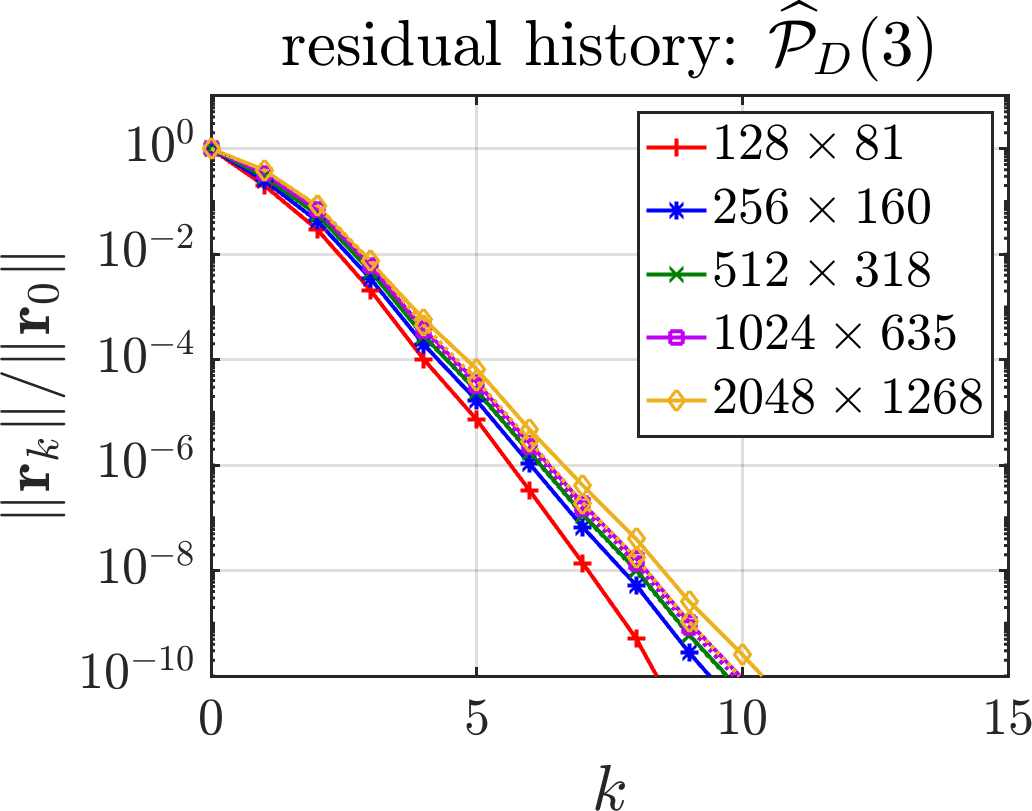}
\includegraphics[width=0.335\textwidth]{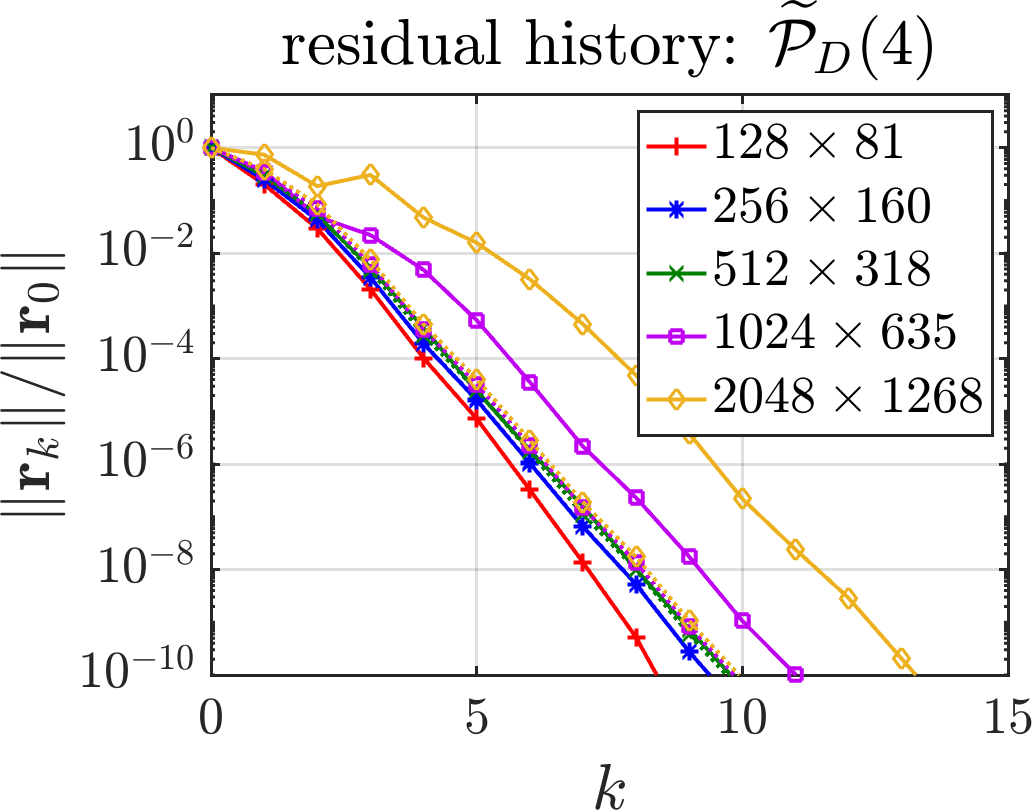}
}
\caption{
Larger-amplitude dam break problem \eqref{SMeq:SWE-db} for \eqref{eq:SWE}, with $\varepsilon = 2.0$.
Dotted lines in plots with $\wh{{\cal P}}$ in their titles correspond to exact solution of linearized systems. 
\label{SMfig:SWE-DB-fully}
}
\end{figure}

\begin{figure}[b!]
\centerline{
\includegraphics[width=0.345\textwidth]{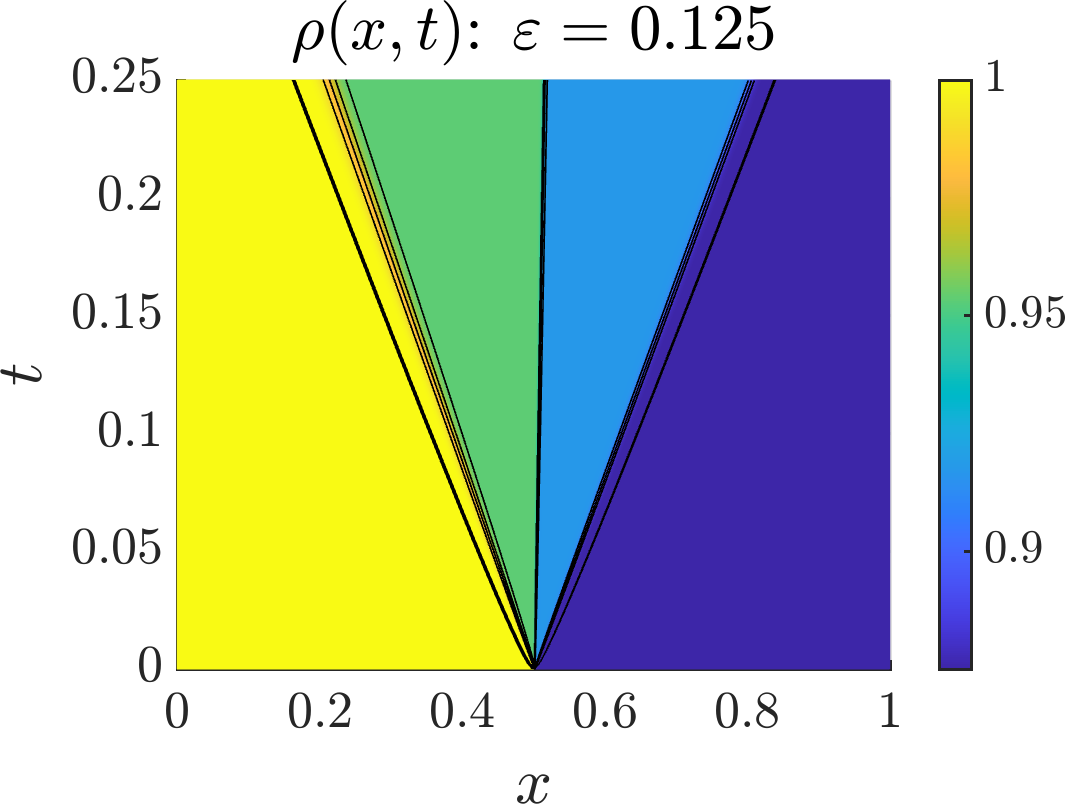}
\hspace{0ex}
\includegraphics[width=0.325\textwidth]{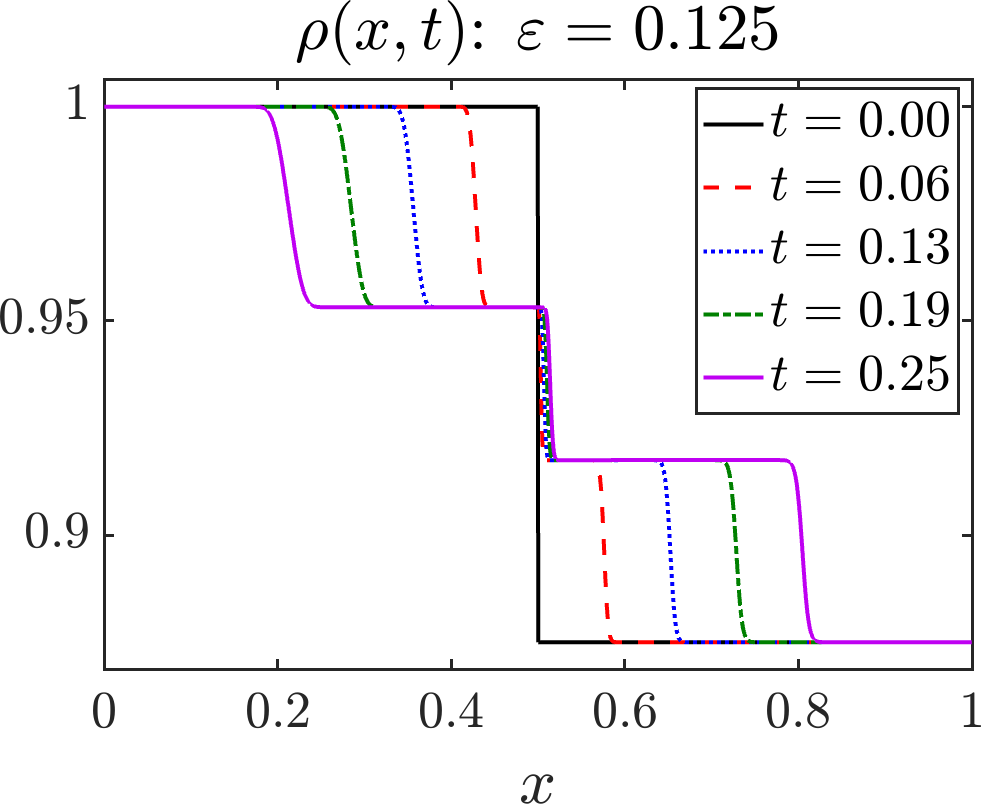}
\hspace{0ex}
\includegraphics[width=0.325\textwidth]{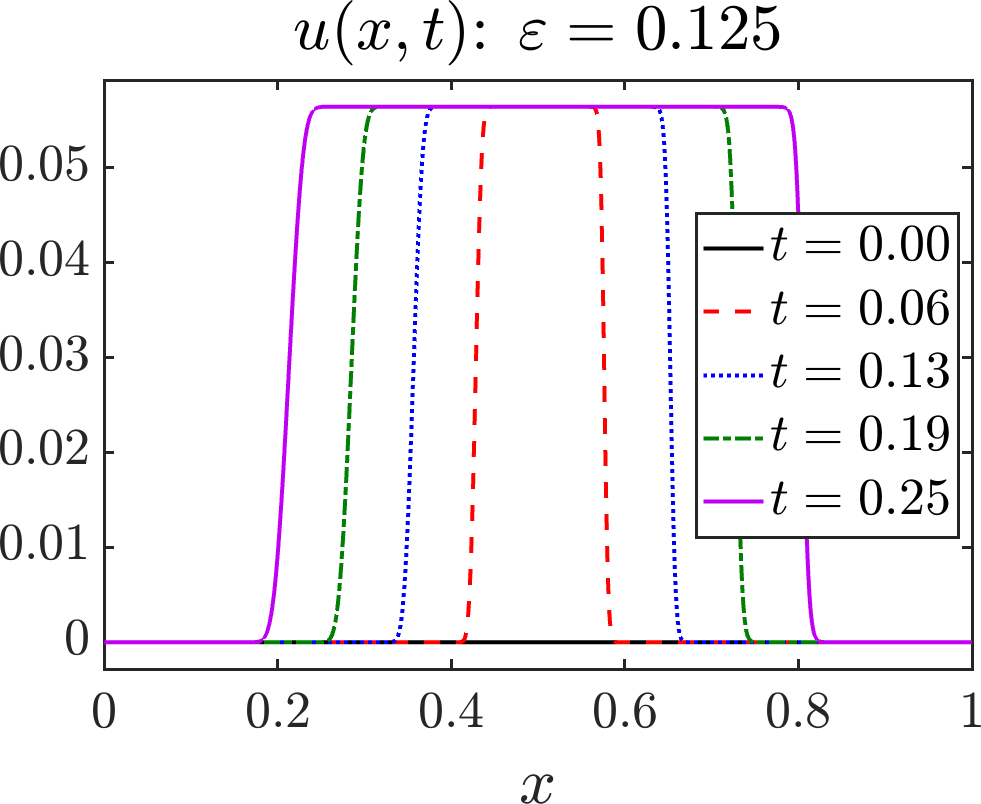}
}
\centerline{
\includegraphics[width=0.335\textwidth]{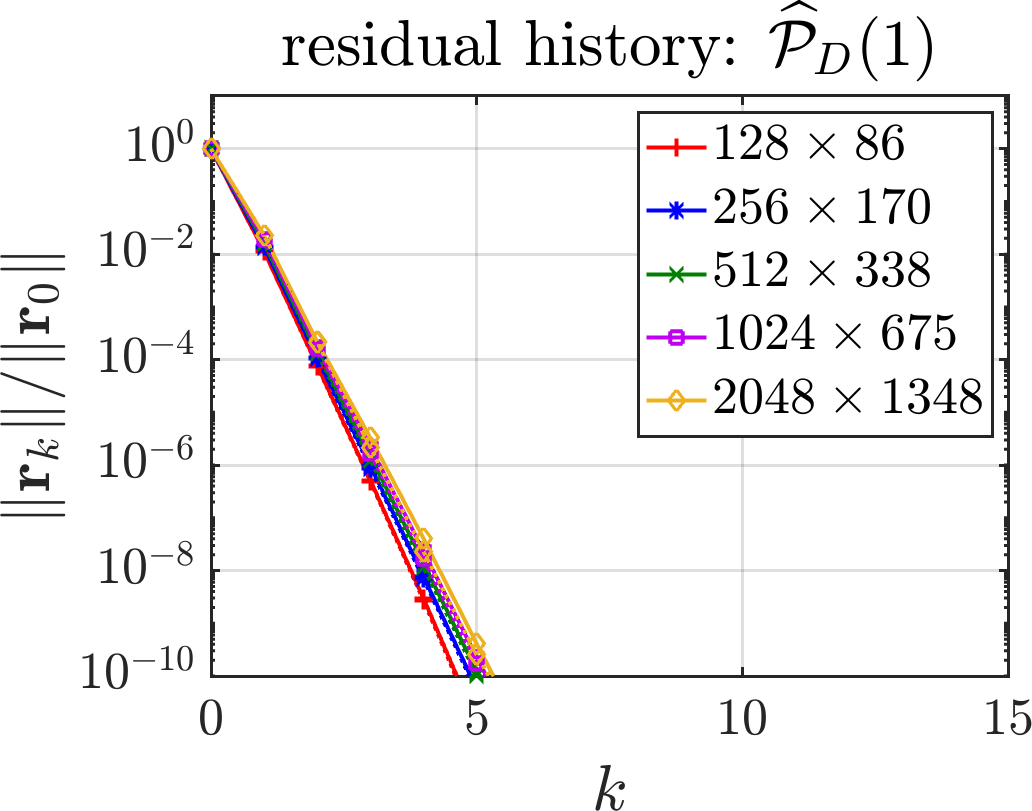}
\includegraphics[width=0.335\textwidth]{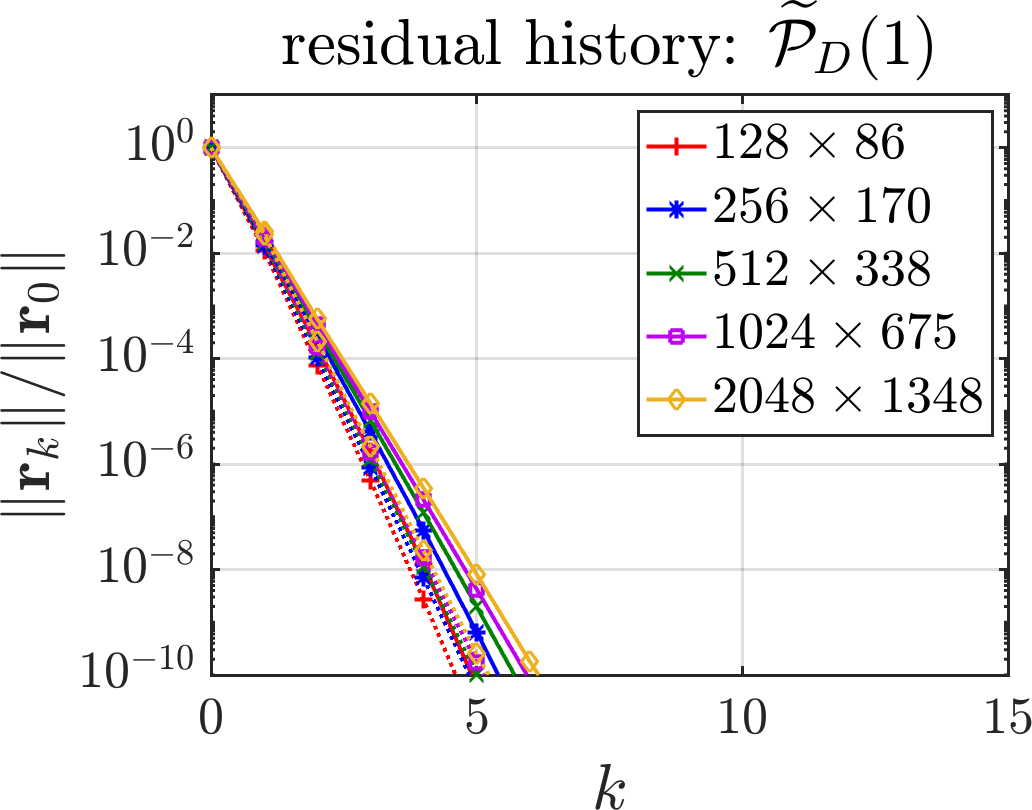}
}
\caption{
Small-amplitude Sod problem \eqref{SMeq:euler-sod} for \eqref{eq:euler} with  $\varepsilon = 0.125$.
Dotted lines in plots with $\wh{{\cal P}}$ in their titles correspond to exact solution of linearized systems. 
\label{SMfig:euler-sod-weakly}
}
\end{figure}
\begin{figure}[t!]
\centerline{
\includegraphics[width=0.345\textwidth]{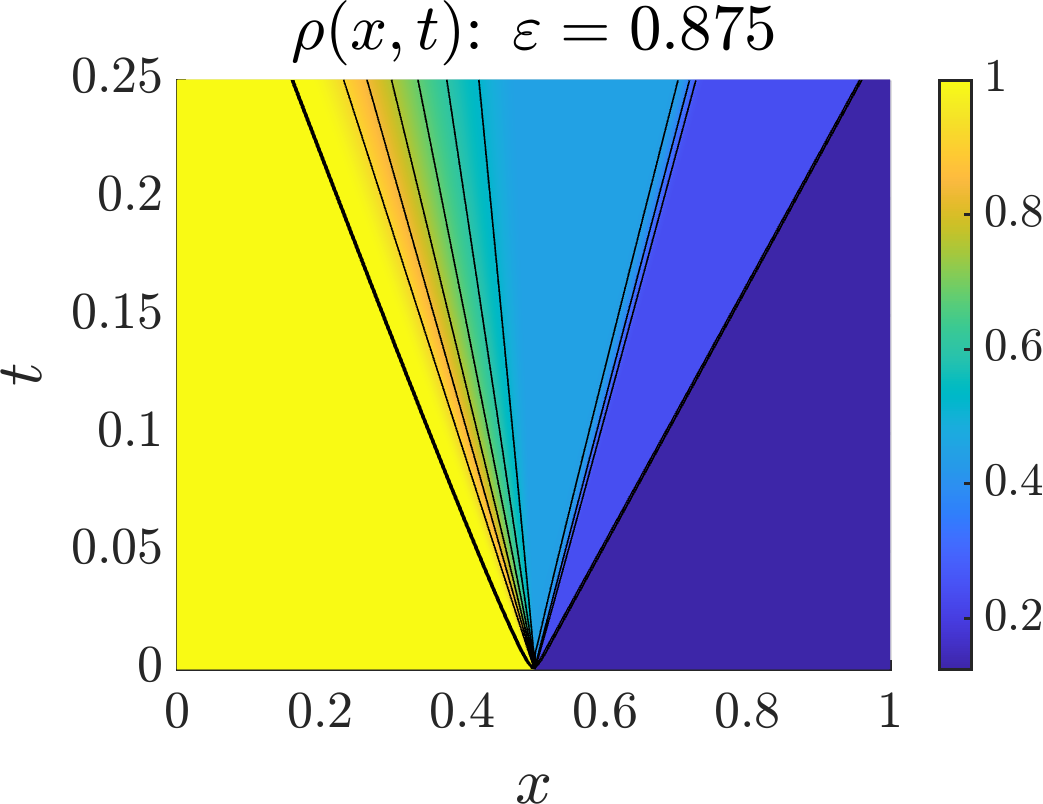}
\hspace{0ex}
\includegraphics[width=0.325\textwidth]{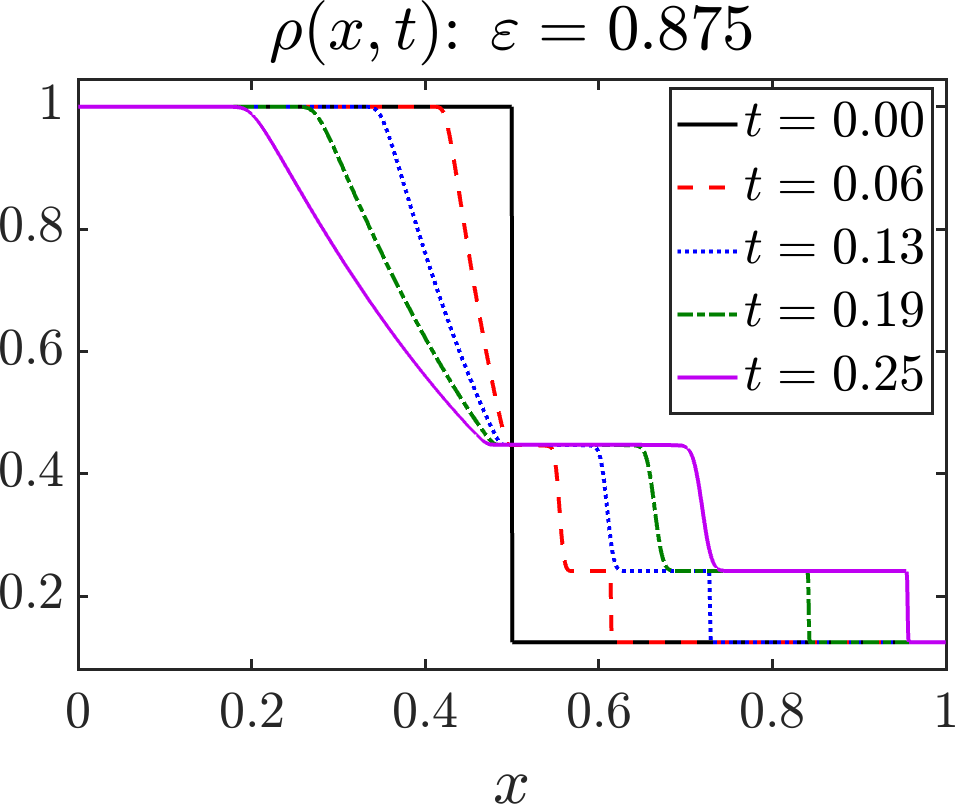}
\hspace{0ex}
\includegraphics[width=0.325\textwidth]{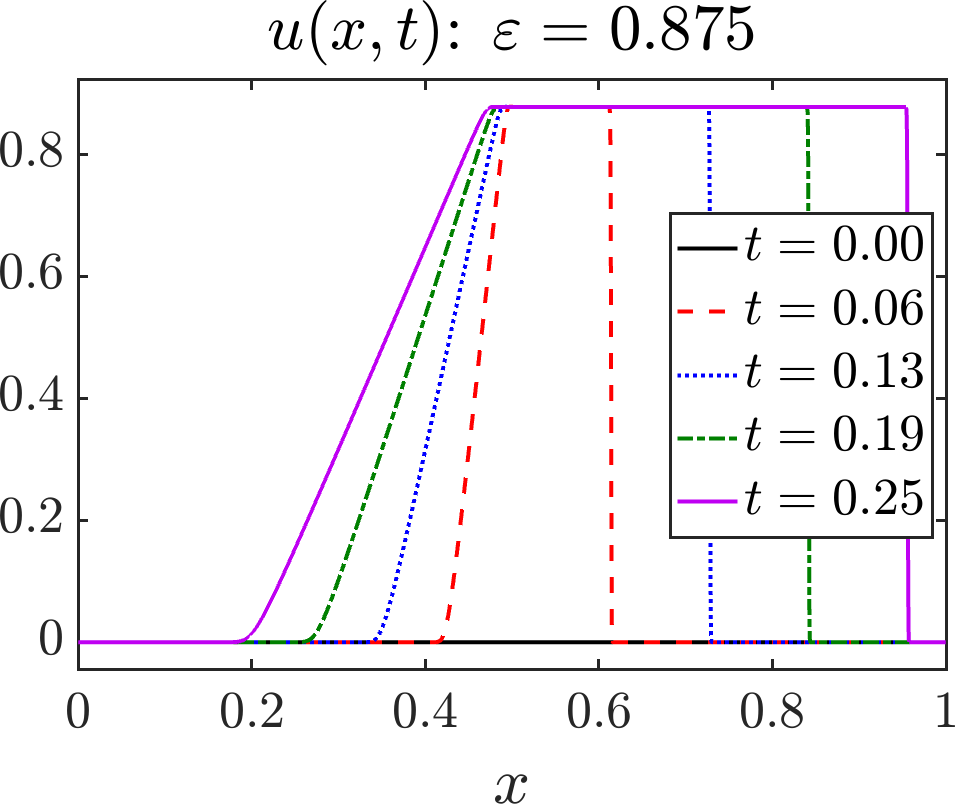}
}
\centerline{
\includegraphics[width=0.335\textwidth]{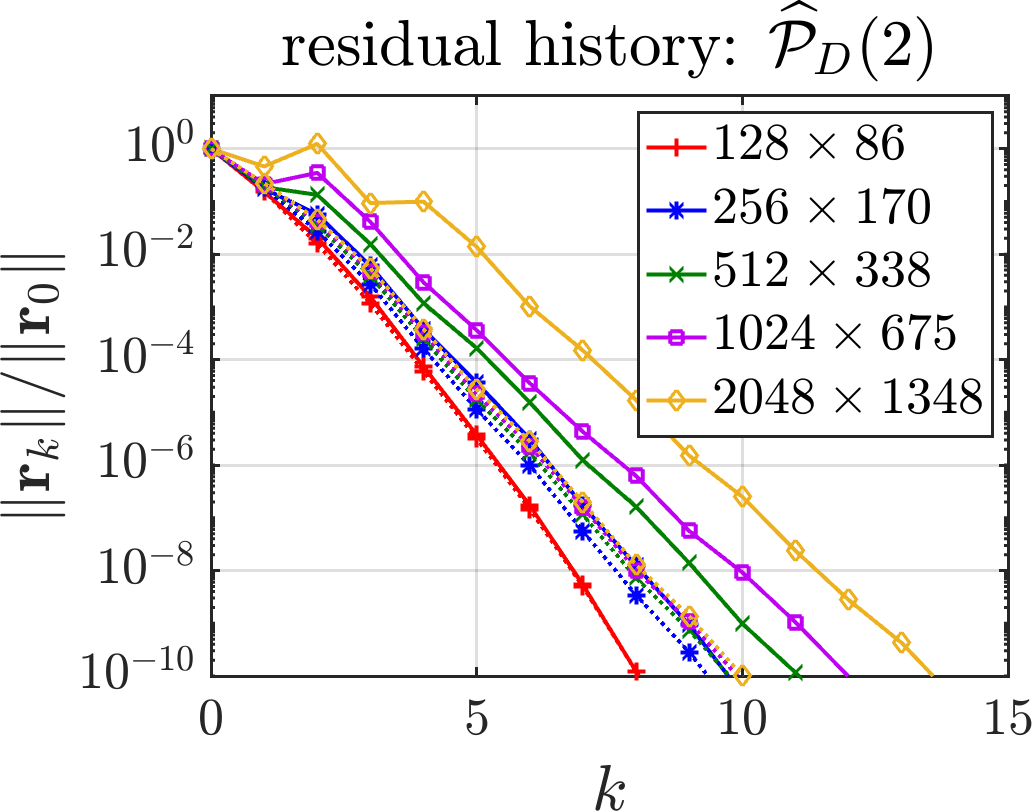}
\hspace{1ex}
\includegraphics[width=0.335\textwidth]{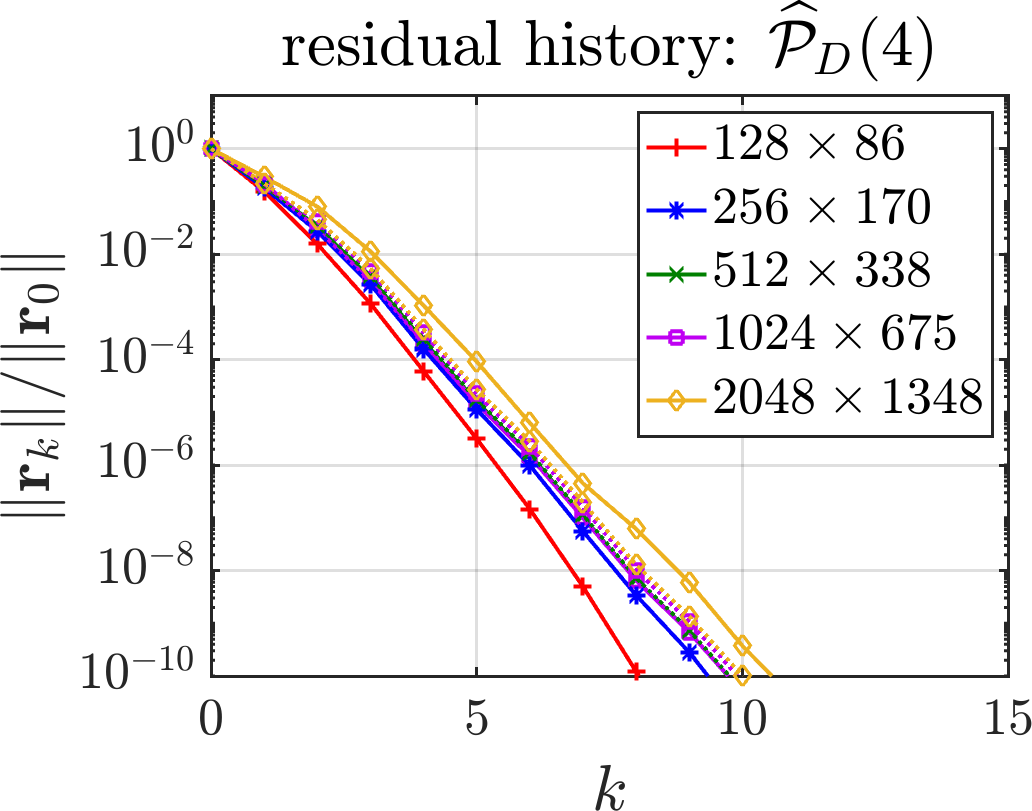}
}
\caption{
Larger-amplitude Sod problem \eqref{SMeq:euler-sod} for \eqref{eq:euler} with $\varepsilon = 0.875$. 
Dotted lines in plots with $\wh{{\cal P}}$ in their titles correspond to exact solution of linearized systems. 
\label{SMfig:euler-sod-fully}
}
\end{figure}

\end{document}